\definecolor{tianred}{rgb}{0.79, 0.17, 0.57}                                   
\definecolor{tianblue}{rgb}{0.0, 0.22, 0.66}                                   
\definecolor{tianpink}{rgb}{0.88, 0.56, 0.59}                                  
\definecolor{tiangreen}{rgb}{0.24, 0.82, 0.44}                                 
\DeclareSymbolFont{cyrletters}{OT2}{wncyr}{m}{n}
\DeclareMathSymbol{\RBe}{\mathalpha}{cyrletters}{"42}                          
\DeclareMathSymbol{\Che}{\mathalpha}{cyrletters}{"51}                          
\DeclareMathSymbol{\Sha}{\mathalpha}{cyrletters}{"58}                          
\DeclareRobustCommand\widecheck[1]{{\mathpalette\@widecheck{#1}}}
\def\@widecheck#1#2{%
    \setbox\z@\hbox{\m@th$#1#2$}%
    \setbox\tw@\hbox{\m@th$#1%
       \widehat{%
          \vrule\@width\z@\@height\ht\z@
          \vrule\@height\z@\@width\wd\z@}$}%
    \dp\tw@-\ht\z@
    \@tempdima\ht\z@ \advance\@tempdima2\ht\tw@ \divide\@tempdima\thr@@
    \setbox\tw@\hbox{%
       \raise\@tempdima\hbox{\scalebox{1}[-1]{\lower\@tempdima\box
\tw@}}}%
    {\ooalign{\box\tw@ \cr \box\z@}}}
\theoremstyle{plain}      \newtheorem{thm}{Theorem}[section]                   
\theoremstyle{plain}                        
\theoremstyle{plain}      \newtheorem{lem}[thm]{Lemma}                         
\theoremstyle{plain}                             
\theoremstyle{plain}      \newtheorem{cor}[thm]{Corollary}                     
\theoremstyle{plain}                         
\theoremstyle{plain}      \newtheorem{prop}[thm]{Proposition}                  
\theoremstyle{plain}      \newtheorem{conjecture}[thm]{Conjecture}             
\theoremstyle{definition} \newtheorem{rmk}[thm]{Remark}                        
\theoremstyle{definition}                      
\theoremstyle{definition} \newtheorem{df}[thm]{Definition}                     
\theoremstyle{definition}                   
\theoremstyle{definition} \newtheorem{eg}[thm]{Example}                        
\theoremstyle{definition}                        
\theoremstyle{definition}                        
\theoremstyle{definition}                      
\theoremstyle{definition}                    
\theoremstyle{definition}                  
\theoremstyle{definition}                        
\theoremstyle{definition}                       
\theoremstyle{definition}                    
\theoremstyle{definition}                  
\theoremstyle{definition}          
\theoremstyle{definition}              
\theoremstyle{definition}                  
\theoremstyle{definition} \newtheorem{prop-df}[thm]{Proposition-Definition}    
\theoremstyle{definition}
\newtheorem*{construction*}{Construction}                                      
\newtheorem*{conjecture*}{Conjecture}                                          
\newtheorem*{hypothesis*}{Hypothesis}                                          
\newtheorem*{convention*}{Convention}                                          
\newtheorem*{notation*}{Notation}                                              
\newtheorem*{summary*}{Summary}                                                
\newtheorem*{qt*}{Question}                                                    
\newtheorem*{rmk*}{Remark}                                                     
\newtheorem*{fact*}{Fact}                                                      
\newtheorem*{lizi*}{Example}                                                   
\newtheorem*{df*}{Definition}                                                  
\theoremstyle{plain}
\newtheorem*{thm*}{Theorem}                                                    
\crefname{thm}{Theorem}{Theorems}                                              %
\crefname{thme}{Th\'eo\`eme}{Th\'eo\`emes}
\crefname{lem}{Lemma}{Lemmas}
\crefname{lemme}{Lemme}{Lemmes}
\crefname{eg}{Example}{Examples}
\crefname{ege}{Exemple}{Exemples}
\crefname{rmk}{Remark}{Remarks}
\crefname{rmke}{Remarque}{Remarques}
\crefname{cor}{Corollary}{Corollaries}
\crefname{core}{Corollaire}{Corollaires}
\crefname{df}{Definition}{Definitions}
\crefname{dfe}{D\'efinition}{D\'efinitions}
\crefname{question}{Question}{Questions}
\crefname{prop}{Proposition}{Propositions}
\crefname{conjecture}{Conjecture}{Conjectures}
\newcommand{\benum}{\begin{enumerate}[label={{\upshape(\alph*)}}]}             
\newcommand{\benuma}{\begin{enumerate}[label={{\upshape(\arabic*)}}]}          
\newcommand{\benumr}{\begin{enumerate}[label={{\upshape(\roman*)}}]}           
\newcommand{\eenum}{\end{enumerate}}
\newcommand{\bconj}{\begin{conjecture}}
\newcommand{\econj}{\end{conjecture}}
\newcommand{\bconjnn}{\begin{conjecture*}}
\newcommand{\econjnn}{\end{conjecture*}}
\newcommand{\begs}{\begin{eg}\hfill\benuma}                                    
\newcommand{\eegs}{\eenum\end{eg}}                                             
\newcommand{\brmks}{\begin{rmk}\hfill\benuma}                                  
\newcommand{\ermks}{\eenum\end{rmk}}                                           
\newcommand{\bitem}{\begin{itemize}}                                           
\newcommand{\eitem}{\end{itemize}}                                             
\newcommand{\be}{\begin{equation}}                                             
\newcommand{\ee}{\end{equation}}                                               
\newcommand{\benn}{\begin{equation*}}                                          
\newcommand{\eenn}{\end{equation*}}                                            
\newcommand{\bqt}{\begin{qt*}\rm}                                              
\newcommand{\eqt}{\end{qt*}}                                                   
\newcommand{\bqtr}{\begin{qt*}\rm\coLR}                                        
\newcommand{\eqtr}{\end{qt*}}                                                  
\newcommand{\beac}{\begin{equation}\begin{array}{c}}                           
\newcommand{\eeac}{\end{array}\end{equation}}                                  
\newcommand{\beqn}{\begin{eqnarray*}}
\newcommand{\eeqn}{\end{eqnarray*}}
\newcommand{\bdf}{\begin{df}}
\newcommand{\bdfhf}{\begin{df}\hfill}
\newcommand{\edf}{\end{df}}
\newcommand{\brmk}{\begin{rmk}}
\newcommand{\brmkhf}{\begin{rmk}\hfill}
\newcommand{\ermk}{\end{rmk}}
\newcommand{\BI}{\mathbf{I}}   
   \newcommand{\BL}{\mathbf{L}}
\newcommand{\CA}{\mathcal{A}}  
\newcommand{\CC}{\mathcal{C}}  
  \newcommand{\CF}{\mathcal{F}}
\newcommand{\CM}{\mathcal{M}}  
\newcommand{\CO}{\mathcal{O}}  \newcommand{\CP}{\mathcal{P}}
  \newcommand{\CT}{\mathcal{T}}
  \newcommand{\BBD}{\mathbb{D}}
\newcommand{\BBG}{\mathbb{G}}  \newcommand{\BBH}{\mathbb{H}}
  \newcommand{\BBP}{\mathbb{P}}
  \newcommand{\rmD}{\mathrm{D}}
\newcommand{\rmI}{\mathrm{I}}
\newcommand{\olK}{\overline{K}}
\newcommand{\G}{\mathbb{G}}                                                    
\renewcommand{\P}{\mathbb{P}}                                                  
\newcommand{\Q}{\mathbb{Q}}                                                    
\newcommand{\Z}{\mathbb{Z}}                                                    
\newcommand{\QZ}{\mathbb{Q}/\mathbb{Z}}                                        
\newcommand{\al}{\alpha}                                                       
\newcommand{\ra}{\rightarrow}                                                  
\newcommand{\stra}[1]{\stackrel{#1}{\ra}}
\newcommand{\coLR}{\textcolor[rgb]{1.00,0,0}}                                  
\newcommand{\ol}{\overline}                                                    
\newcommand{\wt}{\widetilde}                                                   
\newcommand{\wh}{\widehat}                                                     
\newcommand{\wc}{\widecheck}                                                   
\newcommand{\ce}{\colonequals}                                                 
\newcommand{\uu}{^{\times}}                                                    
\newcommand{\ui}{^{-1}}                                                        
\newcommand{\uun}{^{(1)}}                                                      
\newcommand{\xm}{\xymatrix}
\newcommand{\drl}{\varinjlim}                                                  
\newcommand{\prl}{\varprojlim}                                                 
\DeclareMathOperator{\Hom}{Hom}                                                
\renewcommand{\hom}{\Hom}                                                      
\DeclareMathOperator{\ext}{Ext}                                                
\newcommand{\tstprodlim}{\tst\prod\limits}                                     
\newcommand{\tstopslim}{\tst\bigoplus\limits}                                  
\DeclareMathOperator{\Div}{Div}                                                
\DeclareMathOperator{\gal}{Gal}                                                
\newcommand{\pialg}[1]{\pi^{\alg}_1({\ol{#1}})}                                
\DeclareMathOperator{\Image}{Im}                                               
\renewcommand{\Im}{\Image}                                                     
\DeclareMathOperator{\Ker}{Ker}                                                
\renewcommand{\ker}{\Ker}                                                      
\DeclareMathOperator{\cok}{Coker}                                              
\DeclareMathOperator{\coker}{coker}		                                       
\DeclareMathOperator{\e}{Spec}                                                 
\newcommand{\ab}{\mathrm{ab}}                                                  
\newcommand{\alg}{\mathrm{alg}}                                                
\newcommand{\tors}{\mathrm{tors}}                                              
\newcommand{\cts}{\mathrm{cont}}                                               
\newcommand{\sconn}{\mathrm{sc}}                                               
\newcommand{\nr}{{\mathrm{nr}}}                                                
\newcommand{\sh}{\mathrm{sh}}                                                  
\DeclareMathOperator{\ots}{\otimes}                                            
\DeclareMathOperator{\dtp}{\otimes^{\BL}}                                      
\newcommand{\gm}{\BBG_m}                                                       
\newcommand{\Gss}{G^{\mathrm{ss}}}                                             
\newcommand{\Gsc}{G^{\mathrm{sc}}}                                             
\newcommand{\Tss}{T^{\mathrm{ss}}}                                             
\newcommand{\Tsc}{T^{\mathrm{sc}}}                                             
\newcommand{\Gtor}{G^{\mathrm{tor}}}                                           
\newcommand{\whT}{\wh{T}}                                                      %
\newcommand{\munots}[1]{\mu_n^{\ots{#1}}}                                      
\newcommand{\cmdm}{commutative diagram~}
\newcommand{\distri}{distinguished triangle~}
\newcommand{\distris}{distinguished triangles~}
\newcommand{\finiexp}{finite exponent~}
\newcommand{\hchlg}{hypercohomology~}
\newcommand{\inpart}{In particular,~}
\newcommand{\neos}{non-empty open subset~}
\newcommand{\neosbk}{non-empty open subset}
\newcommand{\ttes}{exact sequence~}
\newcommand{\ttess}{exact sequences~}
\newcommand{\ses}{short exact sequence~}
\newcommand{\wa}{weak approximation~}
\newcommand{\wrt}{with respect to~}
\newcommand{\ppair}{perfect pairing~}
\newcommand{\hzx}{functoriality~}
\newcommand{\hzxbk}{functoriality}
\newcommand{\ttai}{homomorphism~}
\newcommand{\ttais}{homomorphisms~}
\newcommand{\maxtorus}{maximal torus~}
\newcommand{\Kumseq}{Kummer sequence~}
\newcommand{\Kumseqs}{Kummer sequences~}
\newcommand{\ArtVer}{Artin--Verdier~}                                          
\newcommand{\TateSha}{Tate--Shafarevich~}                                      
\newcommand{\itm}{\item}
\newcommand{\tst}{\textstyle}
\newcommand{\pf}{\proof}
\newcommand{\pff}{\proof\hfill}
\newcommand{\vem}{\vspace{1em}}
\begin{document}
\title{\textbf{Poitou--Tate sequence for complex of tori over $p$-adic function fields}}

\author{Yisheng TIAN}

\maketitle

\begin{abstract}
We complete the picture of local and global arithmetic duality theorems for short complexes of finite Galois modules and tori over $p$-adic function fields.
In view of the duality theorems, we deduce a $12$-term Poitou--Tate exact sequence which relates global Galois cohomology groups to restricted topological products of local Galois cohomology groups.
\end{abstract}


\section*{Introduction}
Recently, there have been several developments concerning the arithmetic of linear algebraic groups over fields of arithmetic type having cohomological dimensional strictly larger than $2$
(for example, the function field $K$ of a smooth projective geometrically integral curve defined over some finite extension of $\Q_p$).
In \cites{CTPS12}, Colliot-Th\'el\`ene, Parimala and Suresh investigated the Hasse principle for certain varieties over such $K$.
Later, Hu \cite{Hu14} obtained results on the Hasse principle for simply connected groups
while Harari and Szamuely \cite{HSz16} found cohomological obstructions to the Hasse principle for quasi-split reductive groups
over $p$-adic function fields.
On the other hand, Harari, Scheiderer and Szamuely \cites{HSSz15, HSz16} studied systematically obstructions to the Hasse principle and \wa for tori over $p$-adic function fields.
All these developments motivate investigations concerning the arithmetic of reductive linear algebraic groups over such fields $K$.

This paper is some sort of complement to the preprint \cite{cat19WA} where the author established some arithmetic duality results and obtained obstructions to \wa for connected reductive algebraic groups over $K$.
In the present paper, we give a full picture of arithmetic duality results for a short complex of tori and deduce a $12$-term Poitou--Tate style \ttes
(in particular, we obtain such an \ttes for groups of multiplicative type over $K$).

Let us state the main results of this article.
Let $X$ be a smooth projective geometrically integral curve over a $p$-adic field $k$ and
let $K=k(X)$ be the function field of $X$.
Suppose $\rho:T_1\to T_2$ is a morphism of $K$-tori and
let $C=[T_1\to T_2]$ be the associated complex concentrated in degree $-1$ and $0$.
Let $T_1'$ and $T_2'$ be the respective dual torus of $T_1$ and $T_2$,
and put $C'=[T_2'\to T_1']$ for the dual complex concentrated in degree $-1$ and $0$.
We put $\Sha^i(C)\ce \ker\big(\BBH^i(K,C)\to \prod_{v\in X\uun}\BBH^i(K_v,C)\big)$ to be the \TateSha group of $C$,
where $X^{(1)}$ denotes the set of closed points on $X$.
The first main result is

\begin{thm*}
There is a perfect and functorial pairing of finite groups for $0\le i\le 2:$
\[\Sha^i(C)\times\Sha^{2-i}(C')\to\QZ.\]
\end{thm*}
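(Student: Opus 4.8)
The plan is to construct the pairing by a cup product on compact-support cohomology of a model, and to prove perfectness by comparing it with an Artin--Verdier type duality over that model, then passing to the limit over the model and folding the resulting long exact sequences against local duality; the Artin--Verdier duality for complexes of tori is itself reduced to classical statements by a dévissage.

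\emph{Construction of the pairing.} Write $C=[T_1\xrightarrow{\rho}T_2]$ with $T_1$ in degree $-1$, $T_2$ in degree $0$, and $C'=[T_2'\xrightarrow{\rho'}T_1']$ likewise. The evaluation pairings $T_i\otimes^{\mathbb{L}}T_i'\to\gm\otimes^{\mathbb{L}}\gm$ of a torus with its dual, composed with the canonical product $\gm\otimes^{\mathbb{L}}\gm\to\mathbb{Z}(2)[2]$, assemble into a morphism $C\otimes^{\mathbb{L}}C'\to\mathbb{Z}(2)[3]$; after spreading out this realizes the \'etale Poincar\'e duality over the curve $X/k$, whose trace map $H^5_c(-,\mathbb{Z}(2))\cong\mathbb{Q}/\mathbb{Z}$ rests on Tate duality for the $p$-adic field $k$. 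The naive global cup product would land in $H^5(K,\mathbb{Z}(2))=0$, so the pairing must be built with compact supports. Given $\alpha\in\Sha^i(C)$ and $\beta\in\Sha^{2-i}(C')$, choose a dense open $U\subseteq X$ over which $C$, $C'$ and representatives of $\alpha$, $\beta$ all extend (to $\mathcal{C}$, $\mathcal{C}'$, $\alpha_U$, $\beta_U$); since $\alpha$ is everywhere locally trivial, $\alpha_U$ lifts along $\mathbb{H}^i_c(U,\mathcal{C})\to\mathbb{H}^i(U,\mathcal{C})$ to some $\widetilde\alpha$, and we set $\langle\alpha,\beta\rangle:=\tr(\widetilde\alpha\cup\beta_U)\in\mathbb{Q}/\mathbb{Z}$, where $\widetilde\alpha\cup\beta_U\in\mathbb{H}^2_c(U,\mathcal{C}\otimes^{\mathbb{L}}\mathcal{C}')$ is pushed to $H^5_c(U,\mathbb{Z}(2))$ via $\mathcal{C}\otimes^{\mathbb{L}}\mathcal{C}'\to\mathbb{Z}(2)[3]$. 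The lift $\widetilde\alpha$ is determined up to the image of $\bigoplus_{v\notin U}\mathbb{H}^{i-1}(K_v,C)$; by the projection formula the resulting indeterminacy in $\langle\alpha,\beta\rangle$ is a sum of local duality pairings of those classes with $\beta|_{K_v}$, hence vanishes because $\beta\in\Sha^{2-i}(C')$. Independence of the choices of $U$ and of the model is checked the same way, and functoriality in $C$ is naturality of cup product.

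\emph{Perfectness.} The three ingredients are: (1) local duality --- perfect pairings $\mathbb{H}^i(K_v,C)\times\mathbb{H}^{1-i}(K_v,C')\to\mathbb{Q}/\mathbb{Z}$ of locally compact groups, established separately (and in \cite{cat19WA}); (2) Artin--Verdier duality over $U$ --- perfect pairings $\mathbb{H}^i_c(U,\mathcal{C})\times\mathbb{H}^{2-i}(U,\mathcal{C}')\to\mathbb{Q}/\mathbb{Z}$ and $\mathbb{H}^i(U,\mathcal{C})\times\mathbb{H}^{2-i}_c(U,\mathcal{C}')\to\mathbb{Q}/\mathbb{Z}$; (3) the long exact sequence $\cdots\to\mathbb{H}^i_c(U,\mathcal{C})\to\mathbb{H}^i(U,\mathcal{C})\to\bigoplus_{v\notin U}\mathbb{H}^i(K_v,C)\to\mathbb{H}^{i+1}_c(U,\mathcal{C})\to\cdots$ and its analogue for $\mathcal{C}'$. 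The dualities (1)--(2) make these two long exact sequences term-by-term Pontryagin-dual. Passing to the filtered limit over $U$ (following Milne's treatment of the number field case, adapted to the present two-layered geometry) recovers $\mathbb{H}^i(K,C)$ and the restricted products $\rprod{v}\mathbb{H}^i(K_v,C)$, and realizes $\Sha^i(C)$ and $\Sha^{2-i}(C')$ as a pair of mutually dual subquotients of $\mathbb{H}^i_c(U,\mathcal{C})$ and $\mathbb{H}^{2-i}(U,\mathcal{C}')$ for $U$ small; chasing the two sequences against each other identifies this duality with the cup-product pairing constructed above. Both groups are thereby simultaneously discrete and compact, hence finite; one also checks directly that $\Sha^{-1}(C)=\Sha^3(C)=0$, so the range $0\le i\le 2$ is exhaustive.

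\emph{Reducing ingredient (2), and the main obstacle.} The Artin--Verdier duality for complexes of tori over $U$ is itself established by dévissage. First the distinguished triangle $T_2\to C\to T_1[1]\xrightarrow{\rho[1]}T_2[1]$ (and its analogue for $C'$) reduces to a single torus placed in degree $0$ or $-1$; then a flasque resolution $1\to F\to Q\to T\to1$ with $Q$ quasi-trivial reduces a single torus to the quasi-trivial and flasque cases, the first handled by Shapiro's lemma (bringing one down to $\gm$) and the second by a further (co)flasque resolution; everything ultimately reduces to duality for $\gm$ and for the finite modules $\mu_n^{\otimes r}$ over $X/k$ and its opens, which is the classical \'etale duality for curves over $p$-adic fields recalled in the introduction. (Alternatively, once the $\Sha^i(C)$ are known finite, the Kummer sequences $0\to{}_nT\to T\xrightarrow{n}T\to0$ reduce everything to Poitou--Tate duality for finite Galois modules over $p$-adic function fields.) This dévissage is the step I expect to be the main obstacle: a distinguished triangle of complexes of tori induces not a short exact sequence of Tate--Shafarevich groups but only one exact up to local correction terms, so the five-lemma arguments must be run simultaneously with the local duality pairings so that those corrections cancel. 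Two further technical points require care: the passage to the limit over $U$ needs a Mittag--Leffler argument to kill the $\varprojlim^1$-terms together with a finiteness bookkeeping, and since $\mathbb{H}^0$ and $\mathbb{H}^2$ are only locally compact rather than finite all intermediate dualities must be phrased for topological abelian groups with their Pontryagin duals. Finally, over the finitely many places of bad reduction $\mathcal{C}$ is only a complex of groups of multiplicative type rather than of tori, which is harmless but must be carried along throughout.
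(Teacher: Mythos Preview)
Your overall architecture---cup-product construction, Artin--Verdier plus local duality plus limit over $U$---is the paper's, and your construction of the pairing is correct. The gap is in the perfectness argument, specifically in ingredient~(2).

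The Artin--Verdier duality you state, a perfect pairing $\BBH^i_c(U,\CC)\times\BBH^{2-i}(U,\CC')\to\QZ$, is false for $i=0,2$, and not merely because the groups are infinite topological groups. For $i=2$, the group $\BBH^2_c(U,\CC)$ is torsion of cofinite type, while $\BBH^0(U,\CC')$ contains $T_2'(U)$, typically a large non-torsion group with no reasonable locally compact topology making these Pontryagin dual. The best available statement (this is \cite{cat19WA}*{Proposition~1.2}) is that the induced map $\BBH^0(U,\CC')\{\ell\}\to\big(\BBH^2_c(U,\CC)^{(\ell)}\big)^D$ is \emph{surjective with divisible kernel}. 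So the two long exact sequences in (3) are not term-by-term dual, and you cannot pass directly to a duality of the $\Sha$'s. Your d\'evissage to a single torus does not help: already for $\gm$ one has $H^0(U,\gm)=\CO(U)^\times$ versus $H^2_c(U,\gm)$ torsion of cofinite type.

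The paper compensates as follows. First it proves finiteness of $\Sha^0(C)$ and $\Sha^2(C)$ \emph{independently and in advance}: each is a subquotient of a cofinite-type group (e.g.\ $\BBH^2_c(X_0,\CC)$) and has finite exponent via the triangle $M[1]\to C\to T\to M[2]$ and \cref{lemma: list of groups II}. Then, working $\ell$-primarily, it runs your diagram chase with the weak Artin--Verdier statement to get a surjection $\Phi_U\colon\BBD^0_{\sh}(U,\CC')\{\ell\}\to\big(\BBD^2_K(U,\CC)^{(\ell)}\big)^D$ with divisible kernel. After $\drl_U$, the source becomes the finite group $\Sha^0(C')\{\ell\}$, which has no nontrivial divisible subgroup, so $\drl_U\ker\Phi_U=0$ and the limit map is an isomorphism. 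Your proposal derives finiteness as a \emph{consequence} of the duality (``simultaneously discrete and compact''), but in this setting finiteness is a prerequisite input used to kill the divisible defect.
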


So far, the theorem was known when $i=1$ by \cite{cat19WA}*{Theorem 1.17}.
Thus the above result provides a generalization and complement to \emph{loc. cit.} into all possible degrees
(see \cref{remark: on degrees III Sha C} for details).
In the context of higher dimensional local fields,
similar results of Izquierdo \cite{Diego-these}*{pp.~80, Th\'eor\`eme 4.17} are highly relevant
where he obtained perfect pairings between quotients of \TateSha groups by their maximal divisible subgroups.
In the case of number fields,
Demarche \cite{Dem11} established perfect pairings
$\Sha^i(C)\times\Sha^{2-i}(\wh{C})\to \QZ$
of finite groups when either $\ker\rho$ is finite or $\rho$ is surjective.
Here $\wh{C}=[\whT_2\to \whT_1]$ with $\whT_i$ the respective the module of characters associated to $T_i$.
Actually, we would like to consider the complex $C'=[T_2'\to T_1']$ instead of $\wh{C}=[\whT_2\to \whT_1]$ for the following reasons:

\bitem
\item
If we consider a connected reductive group $G$ and
the universal covering $\Gsc\to \Gss$ of its derived subgroup,
we may obtain an associated short complex $C=[\Tsc\to T]$
where $T$ is a maximal torus of $G$ and $\Tsc$ is the inverse image of $T$ in $\Gsc$.
Then \cite{cat19WA}*{Theorem 2.2(2)} tells us that the kernel of the map
$\Sha^1_{\omega}(C')^D\to \Sha^1(C)$
(which is induced by the global duality $\Sha^1(C)\simeq\Sha^1(C')^D$)
provides a defect of \wa for $G$.
Here $\Sha^1_{\omega}(C')$ denotes the subgroup of elements in $\BBH^1(K,C')$ that are zero in all but finitely many $\BBH^1(K_v,C')$.

\item
Let $M$ be a group of multiplicative type over $K$
and embed it into a \ses $0\to M\to T_1\to T_2\to 0$ with $T_i$ being $K$-tori.
Thus $M[1]$ is quasi-isomorphic to $C=[T_1\to T_2]$.
Actually it is too greedy to expect that the dual of $M$ still lies in the category of algebraic $K$-groups.
The natural "dual" of $M$ should be $C'=[T_2'\to T_1']$ because it does not lose the information on the torsion part of $\wh{M}$. The same phenomenon already arose when one tries to handle the dual of a semi-abelian variety.
As pointed out in \cite{HSz05}, the dual of a semi-abelian variety is a so-called $1$-motive.
\eitem

In order to connected pieces in the Poitou--Tate sequence (\ref{diagram: 12-term PT for short complex}) below,
we shall need an auxiliary global duality result.
For a closed point $v\in X\uun$, we let $\CO_v$ be the ring of integers in $K_v$.
We fix a non-empty open subset $X_0$ of $X$ such that $T_1$ and $T_2$ extends to $X_0$-tori $\CT_1$ and $\CT_2$ respectively,
and we put $\CC=[\CT_1\to \CT_2]$.
Let $\BBP^i(K,C)$ be the restricted topological product of $\BBH^i(K_v,C)$ \wrt the subgroups $\BBH^i(\CO_v,\CC)$
(we will show that $\BBH^i(\CO_v,\CC)$ is a subgroup of $\BBH^i(K_v,C)$ in the sequel).
For an abelian group $A$, we denote $A_{\wedge}=\prl_n A/n$.
Let $\Sha^0_{\wedge}(C)\ce \ker\big(\BBH^0(K,C)_{\wedge}\to \BBP^0(K,C)_{\wedge}\big)$.
We shall see that $\Sha^0_{\wedge}(C)\simeq\Sha^0(C)$ if $\ker\rho$ is finite in view of the following result.

\begin{thm*}
Suppose $\ker\rho$ is finite.
Then there is a perfect pairing of finite groups
\[\Sha^0_{\wedge}(C)\times\Sha^{2}(C')\to\QZ.\]
\end{thm*}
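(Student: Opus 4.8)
The plan is to deduce the $\Sha^0_{\wedge}(C)$–$\Sha^2(C')$ pairing from the already-established middle-degree duality $\Sha^1(C)\simeq \Sha^1(C')^D$ together with the hypothesis $\ker\rho$ finite, via a dévissage that compares $C$ with the honest group of multiplicative type $M=\ker\rho$ and the quotient. Concretely, when $\ker\rho$ is finite, write $M = \ker\rho$ (a finite — more generally, zero-dimensional if $\rho$ is not injective as a homomorphism of tori, but in the relevant geometric setting $M$ is finite étale up to the generic situation) and let $T_3 = \coker(\rho)$, which is again a $K$-torus (up to isogeny/connectedness issues one arranges $\rho$ to factor as an isogeny onto its image followed by an inclusion). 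Then $C = [T_1\to T_2]$ is quasi-isomorphic, via a short exact sequence of complexes, to an extension built from $M[1]$ and $T_3[0]$ (or $T_3[1]$, depending on whether $\rho$ is surjective), and dually $C'$ is assembled from $T_3'[0]$ and the "dual" $M'$ fitting in the appropriate triangle. The first step is therefore to set up these distinguished triangles carefully and track the hypercohomology long exact sequences locally and globally.

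Next I would handle the completion functor $A\mapsto A_\wedge = \varprojlim_n A/n$. The key observation is that for a torus $T_3$ over $K$ (a field of cohomological dimension $3$ with finite residue-field-type invariants at each place), $\BBH^0(K,T_3) = T_3(K)$ has $T_3(K)_\wedge$ controlled by the Kummer sequence, and — crucially — $\Sha^0(T_3[1]) = \Sha^1(T_3)$ fits into Harari–Scheiderer–Szamuely duality with $\Sha^1(T_3')$ (or $\Sha^2$, matching degrees). The point of the hypothesis "$\ker\rho$ finite" is precisely that it forces $\BBH^0(K,C) = \ker(T_1(K)\to T_2(K))$ to be an extension of a subgroup of a torus by the finite group $M(K)$, so that the natural map $\Sha^0(C)\to\Sha^0_\wedge(C)$ is an isomorphism: the finite part is already complete, and on the torus part one invokes that $\Sha^0$ of a torus (its $R$-equivalence-type defect) is finite and $n$-divisible-free, hence unaffected by $(-)_\wedge$. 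I would prove this isomorphism $\Sha^0_\wedge(C)\simeq\Sha^0(C)$ as an explicit lemma, using the snake lemma on the diagram relating $\BBH^0(K,C)/n$, $\BBP^0(K,C)/n$ and the mod-$n$ cohomology of $C$, together with finiteness of $\Sha^1(\mu)$-type groups.

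With that reduction in hand, the pairing $\Sha^0(C)\times\Sha^2(C')\to\QZ$ is obtained by cup-product: there is a canonical pairing $C\otimes^{\BL} C' \to \gm[1]$ (this is the defining duality of $C$ and $C'$, the torus-level analogue of $M\otimes^{\BL}\wh M\to \gm$), inducing $\BBH^0(K,C)\times\BBH^2(K,C')\to \BBH^2(K,\gm[1]) = \BBH^3(K,\gm) \simeq \QZ$ by the class-field-theory trace for the $3$-dimensional field $K$, and compatibly the local pairings $\BBH^0(K_v,C)\times\BBH^2(K_v,C')\to\QZ$. Restricting to $\Sha$'s, one gets a well-defined pairing into $\QZ$; one then shows it is non-degenerate on both sides. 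The left non-degeneracy (an element of $\Sha^0_\wedge(C)$ pairing to zero with all of $\Sha^2(C')$ is zero) and finiteness I would extract from a Poitou–Tate-type nine-term exact sequence for $C$ in low degrees — which is exactly the sequence the paper is building toward in (\ref{diagram: 12-term PT for short complex}) — or, to avoid circularity, from the dévissage: the triangle relating $C$ to $M[1]$ and $T_3$ reduces non-degeneracy to the known cases, namely the finite-module Poitou–Tate duality over $K$ (for $M$, due to the arithmetic duality for finite modules over $p$-adic function fields) and the torus duality of Harari–Scheiderer–Szamuely (for $T_3$), glued by the five lemma. Finiteness of $\Sha^2(C')$ follows the same way: $\Sha^2$ of a torus and $\Sha^2$ of a finite module over $K$ are both finite.

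The main obstacle I expect is the bookkeeping in the dévissage when $\rho$ is neither injective nor surjective as a map of tori: one must simultaneously split off a finite piece (the kernel) and a quotient-torus piece, and verify that the completion $(-)_\wedge$, the restricted-product topology defining $\BBP^0$, and the local integral subgroups $\BBH^0(\CO_v,\CC)$ all interact correctly with these two dévissage steps — in particular that the integral models $\CT_i$ can be chosen compatibly so that $\BBH^0(\CO_v,\CC)$ matches up, for all but finitely many $v$, with the product of the integral $\BBH^0$ of the finite and quotient pieces. Controlling $(-)_\wedge$ through the long exact sequences (it is only left exact, so $\varprojlim^1$ terms a priori appear) is the delicate point; the saving grace is again that $\ker\rho$ finite makes the relevant $\BBH^{-1}$ and the mod-$n$ systems behave, so the $\varprojlim^1$ contributions vanish. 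Once that is settled, assembling the perfect pairing is a formal five-lemma argument from the two classical dualities.
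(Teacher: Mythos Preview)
Your proposal takes a different route from the paper and has two genuine gaps.

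First, the pairing you write down is wrong for this base field. Over a $p$-adic function field the canonical pairing is $C\dtp C'\to \Z(2)[3]$, not $\gm[1]$; there is no global trace isomorphism $\BBH^3(K,\gm)\simeq\QZ$ here. The $\Sha$-pairings in this setting are \emph{not} cup products on $K$-cohomology followed by a trace --- they are defined via the Artin--Verdier pairing $\BBH^i(U,\CC)\times\BBH^{2-i}_c(U,\CC')\to\QZ$ over open $U\subset X$, and the passage from $U$ to $\Sha$ is where the work lies. So your Step~B (``obtained by cup-product'') does not produce the pairing you need.

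Second, the d\'evissage-plus-five-lemma plan cannot work as stated, because Tate--Shafarevich groups do not sit in long exact sequences: from the triangle $M[1]\to C\to T$ you get exactness of $\Sha$'s only for three terms via the snake lemma, not the five you would need to run a five-lemma against the dual side. This is the standard obstruction to proving $\Sha$-dualities by reduction to the pieces, and it is why neither Demarche over number fields nor the present paper argues this way. Your fallback --- first prove $\Sha^0_{\wedge}(C)\simeq\Sha^0(C)$ directly, then invoke the already-known $\Sha^0(C)\times\Sha^2(C')\to\QZ$ --- is logically sound, but your argument for that isomorphism (``finite part is already complete, torus part is fine'') does not address the real difficulty: $\BBH^0(K,C)_{\wedge}$ is vastly larger than $\BBH^0(K,C)$, and you must show the extra elements do not lie in $\Sha^0_{\wedge}$. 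In fact the paper obtains $\Sha^0_{\wedge}(C)\simeq\Sha^0(C)$ only \emph{a posteriori}, by combining this theorem with the easier duality.

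What the paper actually does: it first shows $\Sha^0_{\wedge}(C)\simeq\prl_n\Sha^0(C\dtp\Z/n)$ via the Kummer triangle (this holds for any $C$), then uses finiteness of $\ker\rho$ to identify $\prl_n\Sha^0(C\dtp\Z/n)$ with a double limit $\prl_U\prl_n\BBD^0(U,\CC\dtp\Z/n)$ of images of compact-support cohomology. On the dual side it writes $\Sha^2(C')\simeq\drl_U\BBD^1_{\sh}(U,\drl_n\CC'\dtp\Z/n)$. The finite-level Artin--Verdier duality then matches these $U$-by-$U$, and passing to the limit gives the result. The r\^ole of ``$\ker\rho$ finite'' is to make $\prl_n\BBH^0(X_0,\CC\dtp\Z/n)\to\prl_n\BBH^0(K,C\dtp\Z/n)$ injective (via the triangle involving $T_{\Z/n}(C)$ and the vanishing $\prl_n H^i(X_0,{_n}\CM)=0$), so that the inverse limit over $U$ makes sense inside a fixed ambient group.
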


The proof of the theorem is analogous to that of \cite{Dem11}*{Proposition 5.10} in the number field context.
This result is more complicated because we have to handle the inverse limits $\BBH^0(K,C)_{\wedge}$ and $\BBP^0(K,C)_{\wedge}$,
and so it is not an immediate consequence of Artin--Verdier duality and local duality.
The idea is to describe $\Sha^0_{\wedge}(C)$ and $\Sha^{2}(C')$ by various limits.
Thus a crucial problem is to define respective transition maps and now the finiteness of $\ker\rho$ plays a role.
Finally, note that if $\coker\rho$ is trivial, then the kernel of $\rho':T_2'\to T_1'$ is finite.
Thus we obtain a perfect pairing $\Sha^{2}(C)\times\Sha^0_{\wedge}(C')\to\QZ$ of finite groups as well.

\vem
The classical Poitou--Tate sequence is a $9$-term \ttes which relates
global Galois cohomology with restricted ramification of
a finite Galois module over a global field (for example, see \cite{Harari17}*{Th\'eor\`eme 17.13} and also \cite{CK15-PT} for a generalization).
In \cite{HSSz15}, Harari, Scheiderer and Szamuely constructed a $12$-term Poitou--Tate style \ttes for finite Galois modules and a $9$-term one for tori in the $p$-adic function field context.
Now we arrive at the main result of the present article:

\begin{thm*}
Suppose either $\ker\rho$ is finite or $\cok\rho$ is trivial.
Then there is an \ttes of topological abelian groups
\beac\label{diagram: 12-term PT for short complex}
\xm@C=20pt @R=12pt{
0\ar[r] & \BBH\ui(K,C)_{\wedge}\ar[r] & \BBP\ui(K,C)_{\wedge}\ar[r] & \BBH^2(K,C')^D
\ar@{->} `r/5pt[d] `/8pt[l] `^dl/8pt[lll] `^r/8pt[dl][dll]\\
& \BBH^0(K,C)_{\wedge}\ar[r] & \BBP^0(K,C)_{\wedge}\ar[r] & \BBH^1(K,C')^D
\ar@{->} `r/5pt[d] `/8pt[l] `^dl/8pt[lll] `^r/8pt[dl] [dll]\\
& \BBH^1(K,C)\ar[r] & \BBP^1(K,C)_{\tors}\ar[r] & \big(\BBH^0(K,C')_{\wedge}\big)^D
\ar@{->} `r/5pt[d] `/8pt[l] `^dl/8pt[lll] `^r/9pt[dl] [dll]\\
& \BBH^2(K,C)\ar[r] & \BBP^2(K,C)_{\tors}\ar[r] & \big(\BBH\ui(K,C')_{\wedge}\big)^D\ar[r] & 0
}
\eeac
where $\BBP^i(K,C)_{\tors}$ denotes the torsion subgroup of the group $\BBP^i(K,C)$ for $i=1,2$.
\end{thm*}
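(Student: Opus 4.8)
The plan is to assemble the sequence from the three duality inputs already in hand—the two perfect pairings on Tate–Shafarevich groups stated above, together with the local duality $\BBH^i(K_v,C)\times\BBH^{2-i}(K_v,C')\to\QZ$ and Artin–Verdier-type duality—by the standard "braid" argument, exactly as in \cite{HSSz15} for finite modules and tori and as in \cite{Dem11} for number fields. So the first step is to set up the basic nine-term exact sequence in each "row" coming from Artin–Verdier duality over $X_0$: for $\CC=[\CT_1\to\CT_2]$ and $i$ ranging appropriately, one has a long exact sequence relating $\BBH^i_c(X_0,\CC)$, $\BBH^i(X_0,\CC)$ and $\bigoplus_{v\notin X_0}\BBH^i(K_v,C)$, and Artin–Verdier duality identifies $\BBH^i_c(X_0,\CC)$ with $\BBH^{3-i}(X_0,\CC')^D$ (up to the usual completion/torsion decorations in degrees $0$ and $2$, which is precisely why the $(\ )_\wedge$ and $(\ )_\tors$ appear). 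The second step is to pass to the limit over shrinking $X_0$: the direct limit of the $\BBH^i(X_0,\CC)$ is $\BBH^i(K,C)$, the restricted-product groups $\BBP^i(K,C)$ arise in the limit of the $\bigoplus_{v\notin X_0}\BBH^i(K_v,C)\oplus\BBH^i(X_0,\CC)$ terms by definition of the restricted product with respect to the $\BBH^i(\CO_v,\CC)$, and one checks that the limit remains exact (using finiteness to control $\varprojlim^1$ on the completed pieces—this is where $\ker\rho$ finite or $\cok\rho$ trivial is used, just as in the statement of the second theorem above).

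The third step is the bootstrapping that turns the separate rows into one long sequence: one shows the map $\BBP^i(K,C)\to\BBH^{2-i}(K,C')^D$ (Poitou–Tate global-to-dual map, defined via the sum of local pairings) has kernel exactly the image of $\BBH^i(K,C)$ and cokernel exactly $\Sha^{2-i}(C')^D$, and then the perfect pairings $\Sha^i(C)\times\Sha^{2-i}(C')\to\QZ$ (the first main theorem, and its degree-$0$ refinement the second main theorem) identify that cokernel with $\Sha^i(C)$, which feeds into the next row as the kernel of $\BBH^i(K,C)\to\BBP^i(K,C)$. Splicing the four rows $i=-1,0,1,2$ in this way produces the twelve-term sequence displayed in (\ref{diagram: 12-term PT for short complex}); exactness at each of the twelve spots is either one of these kernel/cokernel identifications or a piece of an Artin–Verdier long exact sequence. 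One must also verify that all maps are continuous for the natural topologies (profinite on $\BBH^{-1}$ and $\BBH^0$ after completion, discrete torsion on $\BBH^1,\BBH^2$), which is routine given that each map is a limit of maps between finite or profinite groups.

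The main obstacle I expect is the careful bookkeeping of the completions and torsion subgroups in the extreme degrees, together with the exactness of the passage to the limit over $X_0$. In degree $0$ one is forced to work with $\BBH^0(K,C)_\wedge=\varprojlim_n \BBH^0(K,C)/n$ and with $\BBP^0(K,C)_\wedge$ rather than the naive groups, and the compatibility of "$\varprojlim_n(-/n)$" with the Artin–Verdier long exact sequence and with the direct limit over $X_0$ is delicate precisely because $\varprojlim^1$ terms can intrude; this is exactly the point where the hypothesis on $\rho$ enters, since it guarantees the relevant transition maps have finite (co)kernels so that the Mittag–Leffler condition holds. Dually, in degree $2$ one must check that only the torsion subgroup $\BBP^2(K,C)_\tors$ survives in the sequence and that $\BBH^2(K,C)$ maps onto the right place; this mirrors \cite{Dem11}*{Proposition 5.10} and \cite{HSSz15}, so the argument is available, but transporting it to the $p$-adic function field setting with a two-term complex of tori—where $\cd(K)=3$ so that $\BBH^3$ and higher can be nonzero in intermediate computations—requires checking at each stage that the cohomological-dimension input forces the tails to vanish in the right range. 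Once the degree-$0$ and degree-$2$ subtleties are settled, the remaining exactness statements are formal consequences of the perfect pairings assumed above.
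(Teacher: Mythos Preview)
Your overall architecture---local duality, Artin--Verdier duality, the two $\Sha$-pairings, then splicing rows via those pairings---is correct and matches the paper's skeleton. But the execution you outline diverges from the paper in one technically significant way, and your sketch hides a difficulty that the paper's route is designed to avoid.

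You propose to run the Artin--Verdier long exact sequence directly for the complex $\CC$ over $X_0$, asserting that ``$\BBH^i_c(X_0,\CC)$ is identified with $\BBH^{3-i}(X_0,\CC')^D$ (up to the usual completion/torsion decorations).'' For the integral complex this is \emph{not} a clean isomorphism: the map $\BBH^i(U,\CC')\to\BBH^{2-i}_c(U,\CC)^D$ only has divisible kernel and need not be injective or surjective on the nose (this is what \cite{cat19WA}*{Proposition~1.2} gives, and is why the proof of \cref{duality: global 0 2 easy one} has to work $\ell$-primary part by $\ell$-primary part). Working directly with $\CC$ and then decorating afterwards makes the diagram chases considerably harder. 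The paper instead first proves a \emph{finite-level} 15-term Poitou--Tate sequence for $C\dtp\Z/n$ (\cref{sequence: finite PT seq 15-term}), where Artin--Verdier duality is a genuine perfect pairing of finite groups (\cref{duality: AV-finite level}), and only then passes to limits: inverse limits over $n$ for rows~1--2, direct limits for rows~3--4. This order of operations is what makes the bookkeeping tractable.

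For the delicate middle rows your outline says ``finite (co)kernels so Mittag--Leffler holds,'' but the paper's actual mechanism is more specific and you should be aware of it. In the finite-kernel case the key input is the distinguished triangle ${_n}M[2]\to C\dtp\Z/n\to T_{\Z/n}(C)[1]\to{_n}M[3]$ of \cite{Dem11}*{Lemme~2.3}: it lets one compare the inverse system $\{\BBH^0(K,C\dtp\Z/n)\}_n$ to inverse systems built from the finite group scheme ${_n}M$ (where $\prl_n$ kills everything since $M$ is finite) and from $T_{\Z/n}(C)$ (where the finite-module Poitou--Tate of \cite{HSSz15} applies). The $\prl^1$ control is then a concrete computation (\cref{lemma: inj of derived proj lim}) showing $\prl^1_n\cok\Phi_n^K\simeq\prl^1_n\cok\Phi_n^\Pi$, not a generic Mittag--Leffler invocation. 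In the surjective case one instead uses that $C\simeq M[1]$ so that various groups already have finite exponent and the subscripts $(\ )_\wedge$, $(\ )_{\tors}$ become superfluous. If you try to push through your direct-on-$\CC$ approach you will need an analogous device to separate the ``finite'' and ``torus-like'' contributions inside $\prl_n\BBH^0(K,C\dtp\Z/n)$; without it the exactness at $\BBP^0(K,C)_\wedge$ is not obvious.
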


Actually, the exactness of the first and the last row in diagram (\ref{diagram: 12-term PT for short complex}) hold without any assumption on $C$.
The finiteness of the kernel $\ker\rho$ or the surjectivity of $\rho$ plays an essential role in the proof of the exactness of the middle two rows.
Moreover, as we have seen the finiteness of $\ker\rho$ provides perfect pairings between \TateSha groups which enable us to connect the desired exact sequence.
Finally, let us have a glimpse at consequences
(see \cref{example: PT seq for tori mult type reductive group} for details)
of the Poitou--Tate sequence given above.

\bitem
\item
If we take $C=[0\to P]$ to be a single torus, then we obtain \cite{HSSz15}*{Theorem 2.9}.

\item
For a connected reductive group $G$ and a maximal torus $T$ in $G$,
let $\Tsc\subset \Gsc$ be as above.
Then $\ker(\Tsc\to T)$ is finite and we deduce a Poitou--Tate sequence
(see \cref{example: PT seq for tori mult type reductive group}(2))
for the reductive group $G$.

\item
If $\rho$ is surjective,
then the complex $C[-1]$ is quasi-isomorphic to $\ker\rho$ (which is a group of multiplicative type).
Thus we obtain Poitou--Tate style \ttess for groups of multiplicative type.
\eitem

Let us close the introduction by potential applications of various Poitou--Tate style exact sequences.
Harari and Izquierdo set forth how to use a Poitou--Tate sequence \cite{HI18}*{Th\'eor\`eme 4.6} to define a defect to strong approximation using a divisible quotient group in \cite{HI18}*{Th\'eor\`eme 4.8} when the base field is the function field of a smooth projective curve defined over an algebraically closed field of characteristic zero.
Besides, Demarche \cite{Dem11AF}*{Th\'eor\`eme 2.9} studied a defect to strong approximation with the help of another Poitou--Tate sequence.
Hopefully our Poitou--Tate sequence gives an interpretation of a defect to strong approximation for connected linear groups as well over $p$-adic function fields.

\vem
\textbf{Acknowledgements}.
I thank my advisor David Harari for many useful discussions and helpful comments.
I thank the EDMH doctoral program for support and Universit\'e Paris-Sud for excellent conditions for research.

\section*{Notations and conventions}

\textbf{Function fields}. Throughout this article, $K$ will be the function field of a smooth proper and geometrically integral curve $X$ over a $p$-adic field.
Note that each closed point $v\in X^{(1)}$ defines a discrete valuation of $K$.
We write $\CO_{X,v}$ for the local ring at $v$ and $\kappa(v)$ for its residue field.
Moreover, $K_v$ (resp. $K_v^h$) will be the completion (resp. Henselization) of $K$ with respect to $v$ and $\CO_v$ (resp. $\CO_{v}^h$) will be the ring of integers in $K_v$ (resp. $K_v^h$).

\textbf{Abelian groups}.
Let $A$ be an abelian group.
We shall denote by $_nA$ (resp. $A\{\ell\}$) for the $n$-torsion subgroup
(resp. $\ell$-primary subgroup with $\ell$ a prime) of $A$.
Moreover, let $A_{\tors}$ be the torsion subgroup of $A$, so $A_{\tors}=\drl_n {_n}A$ is the direct limit of $n$-torsion subgroups of $A$.
We write $A^{\wedge}$ for the profinite completion of $A$ (that is, the inverse limit of its finite quotients),
$A_{\wedge}\ce \prl_n A/nA$ and
$A^{(\ell)}\ce\prl_n A/\ell^n$ for the $\ell$-adic completion with $\ell$ a prime number.
A torsion abelian group $A$ is of cofinite type if $_nA$ is finite for each $n\ge 1$.
If $A$ is $\ell$-primary torsion of cofinite type, then $A/\Div A\simeq A^{(\ell)}$ where the former group is the quotient of $A$ by its maximal divisible subgroup.
For a topological abelian group $A$, we write $A^D\ce \hom_{\cts}(A,\QZ)$ for the group of continuous \ttais.

\textbf{Motivic complexes}.
Let $L$ be a field.
For a smooth $L$-variety $Y$, we denote the \'etale motivic complex over $Y$ by the complex of sheaves
$\Z(i)\ce z^i(-,\bullet)[-2i]$ on the small \'etale site of $Y$,
where $z^i(Y,\bullet)$ is Bloch's cycle complex \cite{Bloch86}.
For example, we have quasi-isomorphisms $\Z(0)\simeq\Z$ and $\Z(1)\simeq\G_m[-1]$ by \cite{Bloch86}*{Corollary 6.4}.
We write $A(i)\ce A\dtp \Z(i)$ for any abelian group $A$.
Finally, if $n$ is an integer invertible in $L$, then \cite{GL01:Bloch-Kato}*{Theorem 1.5} gives a quasi-isomorphism $\Z/n\Z(i)\simeq \mu_n^{\ots i}$ where $\mu_n$ is concentrated in degree $0$.
We shall write $\QZ(i)\ce \drl_n \mu_n^{\ots i}$ for the direct limit of the sheaves $\mu_n^{\ots i}$ for all $n\ge 1$.

\textbf{Tori and short complex of tori}.
Let $L$ be a field of characteristic zero and let $\ol{L}$ be a fixed algebraic closure of $L$.
We write $\wh{T}$ or $X^*(T)$ (resp. $\wc{T}$ or $X_*(T)$) for the character module (resp. cocharacter module) of a $L$-torus $T$.
These are finitely generated free abelian groups endowed with a $\gal(\ol{L}|L)$-action, and moreover $\wc{T}$ is the $\Z$-linear dual of $\wh{T}$.
The dual torus $T'$ of $T$ is the torus with character group $\wc{T}$, that is, $\wh{T'}=\wc{T}$.

Let $C=[T_1\stra{\rho}T_2]$ be a short complex of $L$-tori concentrated in degree $-1$ and $0$.
We always write $M=\ker\rho$, $T=\cok\rho$ and $C'=[T_2'\to T_1']$ (again it is concentrated in degree $-1$ and $0$).
Thus $M$ is a group of multiplicative type and $T$ is a torus.
Let $X_0\subset X$ be a non-empty open subset such that $T_1$ and $T_2$ extend to $X_0$-tori $\CT_1$ and $\CT_2$ respectively (in the sense of \cite{SGA3II}*{Expos{\'e} IX, D\'efinition 1.3}).
We similarly write $\CM=\ker(\CT_1\to \CT_2)$ and $\CT=\cok(\CT_1\to \CT_2)$ over $X_0$.
So over $X_0$ we obtain complexes $\CC=[\CT_1\to \CT_2]$ and $\CC'=[\CT_2'\to \CT_1']$.
Finally, $U$ will always be a suitably sufficiently small non-empty open subset of $X_0$.

For the short complex $\CC=[\CT_1\to \CT_2]$ over $X_0$, we put, for $n\ge 1$, $T_{\Z/n}(\CC)\ce H^0(\CC[-1]\dtp\Z/n)$ which is an fppf sheaf of abelian groups.
Recall \cite{Dem11}*{Lemme 2.3} that the sheaf $T_{\Z/n}(\CC)$ is represented by a finite group scheme of multiplicative type over $X_0$,
and it fits into a \distri ${_n}\CM[2]\to \CC\dtp\Z/n\to T_{\Z/n}(\CC)[1]\to {_n}\CM[3]$.

\textbf{Cohomologies}. Unless otherwise stated, all cohomologies are understood with respect to the \'etale topology.
Let $j_0:X_0\to X$ be the open immersion.
We denote $\BBH^i_c(X_0,\CC)\ce \BBH^i(X_0,j_{0!}\CC)$ for the compact support cohomology.

\textbf{Pairings}.
Let $L$ be either $K$ or $K_v$.
There is a canonical pairing $C\dtp C'\to \Z(2)[3]$
(see \cite{Diego-these}*{pp.~69, Lemme 4.3})
over $K$ which extends to a pairing $\CC\dtp\CC'\to \Z(2)[3]$ over $X_0$
(both pairings are in the bounded derived category of \'etale sheaves).
By \cite{HSz16}*{Lemma 1.1}, there are canonical pairings
$\BBH^i(U,\CC)\times\BBH^{2-i}_c(U,\CC')\to \QZ$ for any non-empty open subset $U\subset X_0$ and
$\BBH^i(L,C)\times \BBH^{1-i}(L,C')\to \QZ$.
If we consider further the pairing $\Z/n\dtp\Z/n\to \Z[1]$, we obtain a canonical pairing
$(C\dtp\Z/n)\dtp(C'\dtp\Z/n)\to \Z(2)[4]$ which induces canonical pairings
$\BBH^i(U,\CC\dtp\Z/n)\times\BBH^{1-i}_c(U,\CC'\dtp\Z/n)\to \QZ$ and
$\BBH^i(L,C\dtp\Z/n)\times \BBH^{-i}(L,C'\dtp\Z/n)\to \QZ$.

\textbf{Triangles}.
We shall use the following distinguished triangles frequently in the sequel.
By definition of $C$, there is a \distri $T_1\to T_2\to C\to T_1[1]$
and similarly ${_n}T_1\to {_n}T_2\to C\dtp\Z/n[-1]\to {_n}T_1[1]$.
By definition of $M$ and $T$, there is a \distri $M[1]\to C\to T\to M[2]$.
There is a \distri $C\to C\to C\dtp\Z/n\to C[1]$ obtained from the Kummer sequences for tori.
Finally, there is a \distri ${_n}M[2]\to C\dtp\Z/n\to T_{\Z/n}(C)[1]\to {_n}M[3]$ by \cite{Dem11}*{Lemme 2.3},
where $T_{\Z/n}(C)\ce H^0(C[-1]\dtp\Z/n)$.

\section{Preliminaries in finite level}
We first develop some arithmetic duality results and a $15$-term Poitou--Tate \ttes concerning the complexes $C\dtp\Z/n$ and $C'\dtp\Z/n$ for any $n\ge 1$.

\subsection{Local dualities}
\begin{prop}\label{duality: local-finite level}
There is a \ppair of finite groups for $i\in\Z$
\be\label{pairing: local-finite level}
\BBH^i(K_v,C\dtp\Z/n)\times\BBH^{-i}(K_v,C'\dtp\Z/n)\to \QZ.
\ee
\end{prop}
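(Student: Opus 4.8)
The plan is to reduce the perfect duality for $C \dtp \Z/n$ to known local duality theorems for tori and for finite group schemes of multiplicative type, using the distinguished triangles listed in the Notations section. First I would recall that over the $p$-adic function field $K_v$ one has good arithmetic duality: for a $K_v$-torus $P$ there is a perfect pairing between $\BBH^i(K_v, P)$ and $\BBH^{2-i}(K_v, P')$ of appropriate (co)finite groups — more precisely the local duality for tori over $p$-adic function fields as in \cite{HSSz15} — and for a finite $K_v$-group scheme $N$ of multiplicative type with Cartier dual $N'$ there is a perfect pairing $\BBH^i(K_v, N) \times \BBH^{3-i}(K_v, N') \to \QZ$ of finite groups (local Tate-type duality in the two-dimensional local setting). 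The compatibility of the cup-product pairing $C \dtp C' \to \Z(2)[3]$ (and its mod-$n$ refinement $(C\dtp\Z/n)\dtp(C'\dtp\Z/n) \to \Z(2)[4]$) with these building-block pairings is what makes the five-lemma argument go through.

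The key steps, in order: (1) Observe that $\BBH^i(K_v, C\dtp\Z/n)$ is finite for all $i$ and vanishes for $i$ outside a bounded range, since $C\dtp\Z/n$ is a bounded complex of finite $K_v$-group schemes (using that $K_v$ has finite cohomological dimension and that Galois cohomology of finite modules over such fields is finite); this guarantees all groups in sight are finite, so "perfect pairing" is unambiguous. (2) Use the triangle ${_n}M[2] \to C\dtp\Z/n \to T_{\Z/n}(C)[1] \to {_n}M[3]$, together with the analogous triangle for $C'$ (namely ${_n}T[2] \to C'\dtp\Z/n \to T_{\Z/n}(C')[1] \to \cdots$, where $T = \cok\rho$ is dual to $\ker\rho' $; one must check $T_{\Z/n}(C)' = T_{\Z/n}(C')$ as Cartier duals, which follows from the self-duality of the pairing). (3) Apply the known perfect local pairings to $_nM$ versus $_nM' = T_{\Z/n}(\text{shift})$... — rather, apply them to the finite group schemes ${_n}M$, $T_{\Z/n}(C)$ and their Cartier duals, which are the pieces $_n T$ and $T_{\Z/n}(C')$ appearing in the triangle for $C'$. (4) Chase the long exact cohomology sequences attached to the two triangles, which are placed in duality by the compatibility of cup products with the connecting maps (anticommutativity of cup product with boundary maps), and invoke the five-lemma to conclude that the pairing (\ref{pairing: local-finite level}) is perfect.

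The main obstacle I expect is step (4)'s bookkeeping: verifying that the cup-product pairing $(C\dtp\Z/n)\dtp(C'\dtp\Z/n)\to\Z(2)[4]$ is genuinely compatible — up to the usual signs — with the decomposition of $C\dtp\Z/n$ via the triangle ${_n}M[2]\to C\dtp\Z/n\to T_{\Z/n}(C)[1]$ and the dual decomposition of $C'\dtp\Z/n$, i.e.\ that the diagram relating the pairing on $C\dtp\Z/n$ to the pairings on $_nM$ and on $T_{\Z/n}(C)$ commutes. This is a statement about the derived-category pairing $C\dtp C'\to\Z(2)[3]$ restricting correctly to sub/quotient objects; it is formal but must be stated carefully (one can cite the analogous verification in \cite{Dem11} over number fields and in \cite{Diego-these} over higher-dimensional local fields). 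A secondary, smaller obstacle is identifying the Cartier dual of $T_{\Z/n}(C)$ with the corresponding piece for $C'$: this is where the self-duality of $C\dtp C'\to\Z(2)[3]$ and the identification $(C')' = C$ are used, and it is cleanest to phrase everything in terms of the single pairing rather than chasing duals separately.
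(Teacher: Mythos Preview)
Your overall strategy --- d\'evissage via a distinguished triangle, known local duality for the pieces, and a five-lemma argument --- is correct and is exactly what the paper does. However, the paper chooses a much simpler triangle than you do. Rather than decomposing $C\dtp\Z/n$ via ${_n}M[2]\to C\dtp\Z/n\to T_{\Z/n}(C)[1]\to {_n}M[3]$, the paper uses the triangle ${_n}T_1[1]\to {_n}T_2[1]\to C\dtp\Z/n\to {_n}T_1[2]$ coming directly from the definition $C=[T_1\to T_2]$, together with the companion triangle ${_n}T_2'[1]\to {_n}T_1'[1]\to C'\dtp\Z/n\to {_n}T_2'[2]$ for $C'$. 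With this choice the only local input needed is the perfect pairing $H^j(K_v,{_n}P)\times H^{3-j}(K_v,{_n}P')\to\QZ$ for a torus $P$ and its dual $P'$ (quoted from \cite{HSz16}), and the matching of pieces across the two triangles is immediate: ${_n}T_i$ on one side corresponds to ${_n}T_i'$ on the other by the very definition of the dual torus and of the pairing $C\dtp C'\to\Z(2)[3]$. This sidesteps both obstacles you flagged --- there is no need to identify the Cartier dual of $T_{\Z/n}(C)$ with a constituent of $C'\dtp\Z/n$, and the compatibility of the pairing with the triangles is transparent because the pairing on $C\dtp\Z/n$ is assembled from the pairings ${_n}T_i\times{_n}T_i'\to\mu_n^{\otimes 2}$ to begin with. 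Your route would work, but it introduces exactly the bookkeeping you were worried about for no gain.
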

\begin{proof}
Recall \cite{HSz16}*{pp.~6, pairing (10)} that there is a \ppair of finite groups for $j\in\Z$
\[H^{j}(K_v,{_n}T)\times H^{3-j}(K_v,{_n}T')\to \QZ.\]
Therefore the \distris
${_n}T_1[1]\to {_n}T_2[1]\to C\dtp\Z/n\to {_n}T_1[2]$ and
${_n}T_2'[1]\to {_n}T_1'[1]\to C'\dtp\Z/n\to {_n}T_2'[2]$
yield an isomorphism $\BBH^i(K_v,C\dtp\Z/n)\simeq \BBH^{-i}(K_v,C'\dtp\Z/n)^D$ by d\'evissage.
\end{proof}

%
%

\begin{rmk}\label{remark: on degrees I}
To proceed, let us first briefly explain the degrees under consideration.
\benuma
\item
Let $P$ be a $K$-torus.
The groups $H^i(K,P)$ and $H^i(K_v,P)$ vanish for $i\ge 3$.
See \cite{SvH03}*{Corollary 4.10} for the former group and see \cite{HSz16}*{Remark 2.3} for the latter one.

\item
The group $H^i(K_v,C\dtp\Z/n)=0$ for $i\le -3$ or $i\ge 3$.
This is a direct consequence of d\'evissage thanks to the \distri $C\to C\to C\dtp\Z/n\to C[1]$.

\item
The groups $\BBH^i(\CO_v,\CC\dtp\Z/n)$ vanish for $i\ge 2$ or $i\le -3$.
Indeed, we consider over $\CO_v$ the \distri ${_n}\CT_1[1]\to {_n}\CT_2[1]\to \CC\dtp\Z/n\to {_n}\CT_1[2]$.
Therefore it will be sufficient to show $H^i(\CO_v,{_n}\CP)=0$ for $i\ge 3$ and $i\le -1$, and for any $\CO_v$-torus $\CP$ by d\'evissage.
Finally, $H^i(\CO_v,{_n}\CP)=H^i(\kappa(v),{_n}\CP)=0$ for $i\ge 3$ for cohomological dimension reasons
(the first identification follows from \cite{MilneADT}*{II, Proposition 1.1(b)}),
and $H^i(\CO_v,{_n}\CP)=0$ for $i\le -1$ by construction.
\eenum
\end{rmk}

\begin{lem}\label{lemma: inj of cohomology from Ov to Kv for cpx of tori in finite level}
For $-2\le i\le 1$, the homomorphism $\BBH^i(\CO_v,\CC\dtp\Z/n)\to \BBH^i(K_v,C\dtp\Z/n)$ induced by the inclusion $\CO_v\subset K_v$ is injective.
Thus we may view $\BBH^i(\CO_v,\CC\dtp\Z/n)$ as a subgroup of $\BBH^i(K_v,C\dtp\Z/n)$.
\end{lem}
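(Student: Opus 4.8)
**Planning the proof of Lemma 1.6 (injectivity of $\BBH^i(\CO_v, \CC\otimes^{\BL}\Z/n) \to \BBH^i(K_v, C\otimes^{\BL}\Z/n)$ for $-2 \le i \le 1$).**

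The statement is a dévissage from the torus case. The key input should be the analogous injectivity statement for a single $\CO_v$-torus $\CP$: that $H^i(\CO_v, {}_n\CP) \to H^i(K_v, {}_n\CP)$ is injective in the relevant range. Let me think about how to phrase this carefully...

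So the plan is: First I would reduce to tori via the distinguished triangle ${}_n\CT_1[1] \to {}_n\CT_2[1] \to \CC\otimes^{\BL}\Z/n \to {}_n\CT_1[2]$ over $\CO_v$ (noted already in Remark 1.5(c)), which maps to the corresponding triangle over $K_v$. The long exact sequences in hypercohomology give a ladder, and a diagram chase (five-lemma type argument) reduces the injectivity of $\BBH^i(\CO_v,\CC\otimes^{\BL}\Z/n) \to \BBH^i(K_v, C\otimes^{\BL}\Z/n)$ to injectivity of $H^j(\CO_v, {}_n\CP) \to H^j(K_v, {}_n\CP)$ for $\CP \in \{\CT_1, \CT_2\}$ in a suitable range of $j$ — but one must be a little careful: a naive five-lemma needs injectivity at $j$ and surjectivity at the neighboring degree, so I should instead argue degree by degree using that ${}_n\CP$ is $\mu_n$-twisted-by-a-lattice and that the cohomology groups in adjacent positions vanish or the maps there are also injective. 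Concretely, since ${}_n\CP$ sits in $0 \to {}_n\CP \to \CP \xrightarrow{n} \CP \to 0$, we have $H^j(\CO_v, {}_n\CP)$ computed from $H^j(\CO_v,\CP)$ and $H^{j-1}(\CO_v,\CP)/n$; the same over $K_v$. So I further reduce to comparing $H^j(\CO_v,\CP) \to H^j(K_v, \CP)$.

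Next I would establish the torus comparison. For $j = 0$ this is clear: $H^0(\CO_v, \CP) = \CP(\CO_v) \hookrightarrow \CP(K_v) = H^0(K_v, \CP)$. For $j = 1$, $H^1(\CO_v, \CP) = H^1(\kappa(v), \CP_0)$ where $\CP_0$ is the special fibre (by proper/smooth base change or by \cite{MilneADT}*{III, or II Proposition 1.1(b)} type arguments for smooth group schemes over a henselian DVR), and the map to $H^1(K_v,\CP)$ is injective because the kernel of $H^1(\CO_v,\CP) \to H^1(K_v,\CP)$ is controlled by $H^1$ of the formal/unramified part — indeed for a torus this is the standard fact that a torsor over $\CO_v$ that is trivial over $K_v$ is already trivial (Hensel + smoothness: a $K_v$-point of an $\CO_v$-torsor spreads to an $\CO_v$-point since the torsor is smooth and $\CO_v$ is henselian, once the torsor has an $\CO_v^{\mathrm{nr}}$-point, which it does as $H^1(\CO_v^{\mathrm{nr}},\CP) = H^1(\olFq, \CP_0) = 0$ for a torus over a finite field). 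Combining, $H^j(\CO_v,\CP) \to H^j(K_v,\CP)$ is injective for $j \le 1$, hence $H^j(\CO_v, {}_n\CP) \to H^j(K_v, {}_n\CP)$ is injective for $j \le 1$ as well (the $H^0$-quotient piece needs $j - 1 \le 1$ and a tiny surjectivity/injectivity bookkeeping, but in the range $j \le 1$ everything lands correctly), and feeding this back through the dévissage ladder gives injectivity of $\BBH^i(\CO_v,\CC\otimes^{\BL}\Z/n) \to \BBH^i(K_v, C\otimes^{\BL}\Z/n)$ for $-2 \le i \le 1$.

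The main obstacle is the careful bookkeeping in the dévissage: the five-lemma in its crude form does not immediately apply because at the right-hand end of the range ($i = 1$, coming from $H^2$ of $\CT_1$) one does not have injectivity of the neighboring map for free, so I expect to need the vanishing statements from \cref{remark: on degrees I} — in particular $\BBH^i(\CO_v,\CC\otimes^{\BL}\Z/n) = 0$ for $i \ge 2$ — to pin down the relevant terms, together with the precise identification $H^1(\CO_v, {}_n\CP) = H^1(\kappa(v), {}_n\CP_0)$ and the injectivity of $H^1(\CO_v,\CP) \to H^1(K_v,\CP)$. Once those torus-level facts are in hand, the rest is a routine chase.
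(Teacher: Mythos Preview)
Your d\'evissage-to-tori strategy is different from the paper's, and it has a genuine gap at $i=0$ and $i=1$. In the ladder coming from ${_n}\CT_1[1]\to{_n}\CT_2[1]\to\CC\dtp\Z/n\to{_n}\CT_1[2]$, the four-lemma for injectivity of the map on $\BBH^i$ requires both that $H^{i+2}(\CO_v,{_n}\CT_1)\to H^{i+2}(K_v,{_n}T_1)$ be injective and that $H^{i+1}(\CO_v,{_n}\CT_1)\to H^{i+1}(K_v,{_n}T_1)$ be surjective. For $i=0$ this means injectivity at $j=2$ and surjectivity at $j=1$; for $i=1$ it means injectivity at $j=3$ (fine, the source vanishes) and surjectivity at $j=2$. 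Your Kummer/torsor argument only gives injectivity of $H^j(\CO_v,{_n}\CP)\to H^j(K_v,{_n}P)$ for $j\le 1$, and the map is \emph{not} surjective for $j\ge 1$ (already for $\CP=\gm$ one has $H^1(\CO_v,\mu_n)=\CO_v\uu/n\subsetneq K_v\uu/n=H^1(K_v,\mu_n)$). The vanishing $\BBH^i(\CO_v,\CC\dtp\Z/n)=0$ for $i\ge 2$ from \cref{remark: on degrees I} does not rescue this: it concerns the wrong term in the ladder.

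The paper bypasses all of this with a single observation. Since $\CC\dtp\Z/n$ is a complex of finite \'etale (hence locally constant) group schemes over $\CO_v$, one has $\BBH^i(\CO_v,\CC\dtp\Z/n)\simeq\BBH^i(\kappa(v),\CC\dtp\Z/n)\simeq\BBH^i(\gal(K_v^{\nr}|K_v),\CC\dtp\Z/n)$, and the map to $\BBH^i(K_v,C\dtp\Z/n)$ is the inflation along $G_{K_v}\twoheadrightarrow\gal(K_v^{\nr}|K_v)$. The key point is that the exact sequence $1\to I_v\to G_{K_v}\to\gal(K_v^{\nr}|K_v)\to 1$ admits a section (Serre, \emph{Cohomologie Galoisienne}, II, Appendix), so inflation has a retraction and is therefore injective in \emph{all} degrees at once. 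If you want to salvage your approach, you must either invoke this section (at which point you may as well apply it directly to $\CC\dtp\Z/n$ rather than d\'evissage first), or cite the annihilator statement \cite{HSSz15}*{Proposition 1.2} for the $j=2$ injectivity on ${_n}\CP$---but that result itself rests on the same circle of ideas.
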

\begin{proof}
Let $\olK_v$ be a separable closure of $K_v$ and let $K_v^{\nr}$ be the maximal unramified extension of $K_v$.
First note that
$\BBH^i(\CO_v,\CC\dtp\Z/n)\simeq \BBH^i(\kappa(v),\CC\dtp\Z/n)$
by \cite{MilneADT}*{II, Proposition 1.1(b)} and d\'evissage,
where the latter group is isomorphic to $\BBH^i(\gal(K_v^{\nr}|K_v),\CC\dtp\Z/n)$ by ramification theory.
Choose an extension of $v$ to $\olK_v$ and let $I_v$ be the corresponding inertia group.
But the \ses $1\to I_v\to \gal(\olK_v|K_v)\to \gal(K_v^{\nr}|K_v)\to 1$ admits a section \cite{SerreCG}*{II, Appendix},
consequently $H^i(K_v^{\nr}|K_v,\CC\dtp\Z/n)\to H^i(K_v,C\dtp\Z/n)$ admits a retraction, hence injective.
\end{proof}

We shall need the following additional result on respective annihilators of local dualities.

\begin{prop}\label{duality: annihilators of local pairing in finite level}
For $-2\le i\le 2$, the annihilator of $\BBH^i(\CO_v,\CC\dtp\Z/n)$ is $\BBH^{-i}(\CO_v,\CC'\dtp\Z/n)$ under the \ppair
\[
\BBH^i(K_v,C\dtp\Z/n)\times\BBH^{-i}(K_v,C'\dtp\Z/n)\to \QZ.
\]
\end{prop}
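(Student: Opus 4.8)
The plan is to reduce the statement to the analogous fact for a single torus $\CP$ over $\CO_v$ by dévissage, using the same two distinguished triangles that were used in the proof of \cref{duality: local-finite level}. Concretely, over $\CO_v$ we have the triangle ${_n}\CT_1[1]\to {_n}\CT_2[1]\to \CC\dtp\Z/n\to {_n}\CT_1[2]$ and over $K_v$ the triangle ${_n}T_1[1]\to {_n}T_2[1]\to C\dtp\Z/n\to {_n}T_1[2]$, and similarly for $\CC'$ and $C'$. These triangles are compatible with the base change $\CO_v\hookrightarrow K_v$ and with the pairings $\CC\dtp\CC'\to \Z(2)[3]$ and $C\dtp C'\to \Z(2)[3]$. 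Since the pairing $\BBH^i(K_v,C\dtp\Z/n)\times\BBH^{-i}(K_v,C'\dtp\Z/n)\to\QZ$ is already known to be perfect by \cref{duality: local-finite level}, and since by \cref{lemma: inj of cohomology from Ov to Kv for cpx of tori in finite level} the group $\BBH^i(\CO_v,\CC\dtp\Z/n)$ is a genuine subgroup of $\BBH^i(K_v,C\dtp\Z/n)$ for $-2\le i\le 1$ (and for $i=2$ both sides vanish by \cref{remark: on degrees I}(c)), it suffices to check the orthogonality $\BBH^i(\CO_v,\CC\dtp\Z/n)\perp\BBH^{-i}(\CO_v,\CC'\dtp\Z/n)$ together with an order count, because in a perfect pairing of finite groups a subgroup $A\subset H$ and a subgroup $B\subset H^{-i}$ that annihilate each other with $\#A\cdot\#B=\#H$ must be exact annihilators of one another.

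The base case is the assertion that, under the perfect local pairing $H^j(K_v,{_n}\CP)\times H^{3-j}(K_v,{_n}\CP')\to\QZ$ of \cite{HSz16}*{pp.~6, pairing (10)}, the annihilator of $H^j(\CO_v,{_n}\CP)$ is $H^{3-j}(\CO_v,{_n}\CP')$. This is the unramified-duality statement for finite flat group schemes of multiplicative type over a henselian discrete valuation ring with finite residue field; I would cite it from \cite{MilneADT}*{I, Theorem 2.6 and III} (after identifying $H^i(\CO_v,-)$ with $H^i(\kappa(v),-)$ via \cite{MilneADT}*{II, Proposition 1.1(b)}, exactly as in the proof of \cref{lemma: inj of cohomology from Ov to Kv for cpx of tori in finite level}), or deduce it from the fact that the cup-product $H^j(\kappa(v),{_n}\CP)\times H^{2-j}(\kappa(v),{_n}\CP')\to H^2(\kappa(v),\mu_n)\simeq\QZ$ over the one-dimensional local field $\kappa(v)$ is perfect and the inflation $H^{3-j}(\gal(K_v^{\nr}|K_v),{_n}\CP')$ is Pontryagin-dual to $H^{j-1}$ of the unramified part. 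From this base case the orthogonality $\BBH^i(\CO_v,\CC\dtp\Z/n)\perp\BBH^{-i}(\CO_v,\CC'\dtp\Z/n)$ follows by chasing the two distinguished triangles, since the pairing on $\CC\dtp\Z/n$ is assembled from the pairings on the ${_n}\CT_\bullet$ and the triangle maps land the $\CO_v$-classes in $\CO_v$-classes.

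For the order count, I would use the same triangles to compute Euler characteristics: writing $\chi$ for alternating products of orders, one has $\#\BBH^i(\CO_v,\CC\dtp\Z/n)$ controlled by $\#H^\bullet(\CO_v,{_n}\CT_1)$ and $\#H^\bullet(\CO_v,{_n}\CT_2)$, and the unramified local Euler characteristic formula over $\CO_v$ (residue field $\kappa(v)$ a $p$-adic field, so $\cd=2$; cf. \cite{MilneADT}*{II}) gives $\prod_i\#H^i(\CO_v,{_n}\CP)=\prod_i\#H^i(\CO_v,{_n}\CP')$ compatibly, whence $\#\BBH^i(\CO_v,\CC\dtp\Z/n)\cdot\#\BBH^{-i}(\CO_v,\CC'\dtp\Z/n)=\#\BBH^i(K_v,C\dtp\Z/n)$ for each $i$ in the relevant range. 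Combined with the orthogonality this forces the annihilator statement. The main obstacle I anticipate is not any single step but the bookkeeping: keeping the degree shifts in the triangles, the sign/shift in the duality pairing $\Z/n\dtp\Z/n\to\Z[1]$, and the identifications $H^i(\CO_v,-)\simeq H^i(\kappa(v),-)$ all consistent, and making sure the "subgroup $+$ orthogonality $+$ order count $\Rightarrow$ exact annihilator" reduction is applied only in the range $-2\le i\le 2$ where all groups in sight are finite (outside this range everything vanishes by \cref{remark: on degrees I}, so the statement is vacuous there).
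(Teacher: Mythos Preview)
Your setup coincides with the paper's: the same distinguished triangles, the same reduction to the torus case, and the same observation that the $\CO_v$-classes on each side are mutually orthogonal. Two remarks on the setup before the main point. First, the base case in this $2$-local setting (residue field $\kappa(v)$ a $p$-adic field) is not Milne's classical unramified duality over a local field with finite residue field; the correct reference is \cite{HSSz15}*{Proposition 1.2}, which is exactly what the paper invokes to say that the four outer columns of its $3\times 5$ diagram are short exact. Second, the orthogonality does not actually need d\'evissage from the torus case: the cup product $\BBH^i(\CO_v,\CC\dtp\Z/n)\times\BBH^{-i}(\CO_v,\CC'\dtp\Z/n)\to\QZ$ factors through $\BBH^3(\CO_v,\munots{2})\simeq H^3(\kappa(v),\munots{2})=0$ since $\cd(\kappa(v))=2$. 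This is how the paper argues that the columns are complexes.

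The genuine gap is your order-count step. From orthogonality you get the inequality $\#\BBH^i(\CO_v,\CC\dtp\Z/n)\cdot\#\BBH^{-i}(\CO_v,\CC'\dtp\Z/n)\le\#\BBH^i(K_v,C\dtp\Z/n)$ for each $i$, and you want equality for each $i$. But an Euler characteristic identity is an \emph{alternating} product, and an alternating-product equality together with per-degree inequalities does \emph{not} force per-degree equalities (a shortfall in one even degree can be compensated by a shortfall in an odd degree). Nor does the long exact sequence from the triangle convert the per-degree identities you have for ${_n}\CT_j$ into a per-degree identity for $\CC\dtp\Z/n$: cokernels of the injections $\BBH^i(\CO_v,-)\hra\BBH^i(K_v,-)$ do not automatically inherit a long exact sequence. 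So ``whence $\#A\cdot\#B=\#H$ for each $i$'' is unjustified as written.

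The paper closes this gap not by counting but by an explicit diagram chase in the $3\times 5$ commutative diagram whose rows come from the triangle and whose columns are $\BBH^\bullet(\CO_v,-)\to\BBH^\bullet(K_v,-)\to\BBH^\bullet(\CO_v,-')^D$. Knowing that the four torus columns are short exact (from \cite{HSSz15}*{Proposition 1.2}) and that the middle column is a complex, the paper verifies exactness of the middle column case by case for $i=-2,-1,0$, using in each case a specific extra input (e.g.\ $H^0(\CO_v,{_n}\CT_j)\simeq H^0(K_v,{_n}T_j)$ for $i=-2$, or surjectivity of $H^1(K_v,{_n}T_1)\to H^2(\CO_v,{_n}\CT_1')^D$ for $i=0$). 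The cases $i=1,2$ then follow by exchanging $C$ and $C'$. If you want to salvage an order-free argument, the cleanest route is exactly this diagram chase; alternatively one could identify the cone of $R\Gamma(\CO_v,\CC\dtp\Z/n)\to R\Gamma(K_v,C\dtp\Z/n)$ with the dual of $R\Gamma(\CO_v,\CC'\dtp\Z/n)$ in the derived category, but that requires a local duality statement you have not set up.
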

\pf
The \distri ${_n}\CT_1[1]\to {_n}\CT_2[1]\to \CC\dtp\Z/n\to {_n}\CT_1[2]$ over $\CO_v$ yields a \cmdm
\[
\xm@C=14pt{
H^{i+1}(\CO_v,{_n}\CT_1)\ar[r]\ar[d] & H^{i+1}(\CO_v,{_n}\CT_2)\ar[r]\ar[d] & \BBH^i(\CO_v,\CC\dtp\Z/n)\ar[r]\ar[d] &
H^{i+2}(\CO_v,{_n}\CT_1)\ar[r]\ar[d] & H^{i+2}(\CO_v,{_n}\CT_2)\ar[d] \\
H^{i+1}(K_v,{_n}T_1)\ar[r]\ar[d] & H^{i+1}(K_v,{_n}T_2)\ar[r]\ar[d] & \BBH^i(K_v,C\dtp\Z/n)\ar[r]\ar[d] &
H^{i+2}(K_v,{_n}T_1)\ar[r]\ar[d] & H^{i+2}(K_v,{_n}T_2)\ar[d] \\
H^{2-i}(\CO_v,{_n}\CT'_1)^D\ar[r] & H^{2-i}(\CO_v,{_n}\CT'_2)^D\ar[r] & \BBH^{-i}(\CO_v,\CC'\dtp\Z/n)^D\ar[r] &
H^{1-i}(\CO_v,{_n}\CT'_1)^D\ar[r] & H^{1-i}(\CO_v,{_n}\CT'_2)^D
}
\]
of finite groups with exact rows.
Moreover, all the columns except the middle one are exact by \cite{HSSz15}*{Proposition 1.2}.
For cohomological dimension reasons, for any $X_0$-torus $\CP$, the pairing
\[
\BBH^i(\CO_v,{_n}\CP)\times\BBH^{3-i}(\CO_v,{_n}\CP')\to \BBH^3(\CO_v,\QZ(2))\simeq\BBH^3(\kappa(v),\QZ(2))
\]
is trivial and hence the columns in the diagram are complexes.
\benuma
\item
$i=-2$.
In this case, we have $H^{i+2}(\CO_v,{_n}\CT_j)\simeq H^{i+2}(K_v,{_n}T_j)$ and $H^{i+1}(\CO_v,{_n}\CT_j)=H^{i+1}(K_v,{_n}T_j)=0$ for $j=1,2$.
Thus exactness of the middle column fulfills after a diagram chasing.
\item
$i=-1$.
The right two columns of the diagram above are exact, and $H^{i+1}(\CO_v,{_n}\CT_2)\simeq H^{i+1}(K_v,{_n}T_2)$ is an isomorphism.
A diagram chasing yields the exactness of the middle column.
\item
$i=0$.
Note that $H^{i+1}(K_v,{_n}T_1)\to H^{-i+2}(\CO_v,{_n}\CT'_1)^D$ is surjective
because $H^{-i+2}(\CO_v,{_n}\CT'_1)\to H^{-i+2}(K_v,{_n}T_1')$ is an inclusion (see \cite{HSSz15}*{Proposition 1.2}) of finite groups.
The exactness of the middle column follows from another diagram chasing.
\qed
\eenum

We denote by $\BBP^i(K,C\dtp\Z/n)\ce\prod'\BBH^i(K_v,C\dtp\Z/n)$ the restricted topological product of the finite discrete groups
$\BBH^i(K_v,C\dtp\Z/n)$ \wrt $\BBH^i(\CO_v,\CC\dtp\Z/n)$.
Note that the only non-trivial degrees are $-2\le i\le 2$ by \cref{remark: on degrees I}.
Since $\BBP^i(K,C\dtp\Z/n)$ is a direct limit of discrete groups, it is locally compact.
Moreover, since
$\BBP^{-2}(K,C\dtp\Z/n)=\prod_{v\in X^{(1)}}\BBH^{-2}(K_v,C\dtp\Z/n)$ and
$\BBP^{ 2}(K,C\dtp\Z/n)=\bigoplus_{v\in X^{(1)}}\BBH^2(K_v,C\dtp\Z/n)$
by \cref{remark: on degrees I},
$\BBP^{-2}(K,C\dtp\Z/n)$ is profinite,
and
$\BBP^{2}(K,C\dtp\Z/n)$ is discrete (it is a direct sum of finite groups).

Similarly, we let $\BBP^i(K,C)$ be the restricted topological product of the groups
$\BBH^i(K_v,C)$ \wrt the subgroups $\BBH^i(\CO_v,\CC)$ (see \cref{injectivity: cohohomology of O into K for -1 0 1 2} below) for $v\in X_0\uun$ and $-1\le i\le 2$.

\begin{cor}
For $-2\le i\le 2$, there are isomorphisms
$\BBP^i(K,C\dtp\Z/n)\simeq \BBP^{-i}(K,C'\dtp\Z/n)^D$
of locally compact topological groups.
\end{cor}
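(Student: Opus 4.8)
The plan is to deduce the duality of restricted products from the local duality \cref{duality: local-finite level} together with the compatibility of annihilators recorded in \cref{duality: annihilators of local pairing in finite level}. Recall the general principle: if $\{A_v\}$ and $\{B_v\}$ are families of locally compact abelian groups in perfect duality $A_v \times B_v \to \QZ$, and $A_v^0 \subset A_v$, $B_v^0 \subset B_v$ are compact open subgroups which are exact annihilators of one another, then the restricted product $\prod'(A_v, A_v^0)$ is in perfect duality with $\prod'(B_v, B_v^0)$, the pairing being $\langle (a_v), (b_v) \rangle = \sum_v \langle a_v, b_v \rangle$ (a finite sum, since $a_v \in A_v^0$ and $b_v \in B_v^0$ for almost all $v$, so almost all terms vanish). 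This is exactly the situation at hand with $A_v = \BBH^i(K_v, C\dtp\Z/n)$, $B_v = \BBH^{-i}(K_v, C'\dtp\Z/n)$, $A_v^0 = \BBH^i(\CO_v, \CC\dtp\Z/n)$ and $B_v^0 = \BBH^{-i}(\CO_v, \CC'\dtp\Z/n)$.

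First I would check the hypotheses of this principle in our setting. Perfectness of the local pairing for each $v$ is \cref{duality: local-finite level}; the groups are finite, hence locally compact (in fact discrete), by \cref{remark: on degrees I}(b). The subgroups $A_v^0$ are genuinely subgroups of $A_v$ by \cref{lemma: inj of cohomology from Ov to Kv for cpx of tori in finite level} (for $-2 \le i \le 1$; the case $i=2$ is vacuous since $\BBH^2(\CO_v, \CC\dtp\Z/n) = 0$ by \cref{remark: on degrees I}(c), so there $A_v^0$ is trivial and its annihilator is all of $B_v = \BBH^{-2}(K_v, C'\dtp\Z/n)$, consistent with \cref{remark: on degrees I} giving $\BBH^{-2}(\CO_v, \CC'\dtp\Z/n) = \BBH^{-2}(K_v, C'\dtp\Z/n)$ as the full group). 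The mutual-annihilator statement is precisely \cref{duality: annihilators of local pairing in finite level}. Finally, for almost all $v$ (those in a suitable small open $U \subset X_0$ over which everything extends to a torus) the subgroup $A_v^0 \subset A_v$ is the unramified cohomology, which is what the restricted product is defined with respect to; this is the standard set-up.

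Then I would invoke the formal lemma on duality of restricted products. Concretely: the pairing $\BBP^i(K, C\dtp\Z/n) \times \BBP^{-i}(K, C'\dtp\Z/n) \to \QZ$ given by the sum of local pairings is well-defined (finiteness of the sum), separately continuous, and one shows it identifies each factor with the Pontryagin dual $(-)^D$ of the other. The continuity and the identification with the full continuous dual follow from the compact-open description: $\BBP^i$ is the union of the open subgroups $\prod_{v \in S} \BBH^i(K_v, C\dtp\Z/n) \times \prod_{v \notin S} \BBH^i(\CO_v, \CC\dtp\Z/n)$ over finite $S$, each of which is dual to the corresponding subgroup on the other side precisely because $A_v^0, B_v^0$ are exact annihilators; passing to the (co)limit over $S$ gives the claim. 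One should note that the two products $\BBP^i$ and $\BBP^{-i}$ are indexed by the same set of places $X^{(1)}$, so there is no asymmetry.

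The main obstacle is not any single hard computation but rather bookkeeping: one must be careful that the topologies match up (the restricted-product topology on one side is dual to the one on the other, which is where the exact-annihilator hypothesis is really used, not merely perfectness of each local pairing), and one must separately account for the boundary degrees $i = \pm 2$, where one of the two unramified subgroups degenerates — the product becomes an honest product (profinite) on one side and a direct sum (discrete) on the other, and Pontryagin duality must be seen to interchange these two. Provided \cref{duality: annihilators of local pairing in finite level} is in hand, as it is, these are routine verifications and the corollary follows formally.
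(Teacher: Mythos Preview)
Your proposal is correct and follows the same approach as the paper, which simply records the result as an immediate consequence of \cref{duality: local-finite level} and \cref{duality: annihilators of local pairing in finite level}. You have merely unpacked the standard restricted-product duality principle that the paper leaves implicit.
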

\begin{proof}
This is an immediate consequence of \cref{duality: local-finite level} and \cref{duality: annihilators of local pairing in finite level}.
\end{proof}

\subsection{Global dualities}
We begin with an \ArtVer style duality result which plays a role in the proof of the global duality
\[\Sha^i(C\dtp\Z/n)\times\Sha^{1-i}(C'\dtp\Z/n)\to \QZ\]
for $-1\le i\le 2$.
We quote the following proposition \cite{Diego-these}*{pp.~70, I.4.4} for convenience and completeness.

\begin{prop}[\ArtVer duality]\label{duality: AV-finite level}
Let $U\subset X_0$ be a \neosbk.
For $i\in\Z$, there is a \ppair between finite groups
\[\BBH^i(U,\CC\dtp\Z/n) \times \BBH^{1-i}_c(U,\CC'\dtp\Z/n) \to \QZ.\]
\end{prop}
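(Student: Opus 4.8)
This proposition is quoted from \cite{Diego-these}*{pp.~70, I.4.4}, so strictly speaking nothing is to be proved; the line of argument I would reconstruct is as follows. The pairing is the one induced by the motivic product $(\CC\dtp\Z/n)\dtp(\CC'\dtp\Z/n)\to\Z(2)[4]$ together with cup product into compactly supported cohomology and the trace isomorphism $\BBH^5_c(U,\Z(2))\isoto\QZ$ (equivalently $\BBH^4_c(U,\mu_n^{\ots2})\isoto\tfrac1n\Z/\Z\subset\QZ$, since everything is $n$-torsion). Two things have to be established: that the groups involved are finite, and that the pairing has trivial left and right kernels.

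For finiteness I would apply the \distri ${_n}\CT_1[1]\to{_n}\CT_2[1]\to\CC\dtp\Z/n\to{_n}\CT_1[2]$ over $U$ (and its analogue for $\CC'$), which by d\'evissage reduces the claim to the finiteness of $H^j(U,{_n}\CT_i)$ and $H^j_c(U,{_n}\CT_i')$. These are finite \'etale sheaves on the open curve $U$ over the $p$-adic field $k$; since $H^*(k,-)$ is finite on finite modules and $H^*(U_{\ol k},-)$ is finite on finite sheaves (the latter because $U_{\ol k}$ has \'etale cohomological dimension $1$ and topologically finitely generated fundamental group), the Hochschild--Serre spectral sequence gives the result, and the compact support version is handled by the same argument on the smooth projective model $X$ with coefficients in $j_{0!}(-)$.

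For perfectness I would run two d\'evissages. First, using the same two \distris — $j_{0!}$ is exact, hence commutes with them — and the compatibility (up to sign) of cup products with the connecting maps, the five lemma reduces matters to the pairing
\[H^a(U,{_n}\CT)\times H^{4-a}_c(U,{_n}\CT')\to H^4_c(U,\mu_n^{\ots2})\isoto\tfrac1n\Z/\Z\subset\QZ\]
for a $U$-torus $\CT$ with dual torus $\CT'$, the degree $4$ coming from the shifts built into the two triangles. Second, embedding $\CT$ in a short exact sequence $1\to\CT\to\CQ\to\CR\to1$ of $U$-tori with $\CQ$ quasi-trivial (a product of Weil restrictions $\Res_{V/U}\gm$ along finite \'etale covers $V\to U$), a d\'evissage in the torus variable together with Shapiro's lemma — valid for $H^*_c$ as well because Weil restriction along a finite morphism commutes with $j_{0!}$ — reduces the question to $\CT=\gm$ over such a $V$, i.e.\ to the finite-level \ArtVer duality $H^a(V,\mu_n)\times H^{4-a}_c(V,\mu_n)\to\QZ$ over an open curve over a $p$-adic field, which is \cite{HSSz15}. (Alternatively one could bootstrap directly from a small $U$ to a large one, shrinking by one closed point at a time and feeding in the local duality of \cref{duality: local-finite level} together with \cref{duality: annihilators of local pairing in finite level}; but the base case still rests on the torus-level input.)

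The step I expect to be the main obstacle is checking that the cup products genuinely interchange, up to sign, \emph{all} the connecting homomorphisms produced by the two \distris and by the exact sequence of tori, so that the two five-lemma arguments actually apply: this is a chain of diagram chases that is delicate precisely because it simultaneously involves cup products, the mapping cone defining $\BBH^*_c$, the Shapiro isomorphisms and the trace map. The torus-level base case itself (duality for $\mu_n$ over an open $p$-adic curve, which already packages Tate local duality at the residue fields with Poincar\'e duality along the curve) is the genuine arithmetic content, and I would take it as a black box.
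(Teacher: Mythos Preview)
The paper does not prove this proposition; it simply quotes it from \cite{Diego-these}, as you correctly note. Your reconstruction is sound in its overall shape: the pairing is set up as you describe, finiteness follows by d\'evissage to finite \'etale sheaves and Hochschild--Serre, and the first d\'evissage (via the triangle ${_n}\CT_1[1]\to{_n}\CT_2[1]\to\CC\dtp\Z/n\to{_n}\CT_1[2]$ and its dual) correctly reduces perfectness to the pairing $H^a(U,{_n}\CT)\times H^{4-a}_c(U,{_n}\CT')\to\QZ$ for a single $U$-torus $\CT$.

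Your second d\'evissage, however, does not terminate as written: embedding $\CT$ in a quasi-trivial $\CQ$ leaves a quotient $\CR$ that is again an arbitrary $U$-torus, so the five-lemma merely trades the duality statement for ${_n}\CT$ against the same statement for ${_n}\CR$ without lowering any invariant you could induct on. Fortunately this step is unnecessary. Once you are down to ${_n}\CT$, this is already a locally constant constructible $n$-torsion sheaf, and a direct check (using ${_n}\CT\simeq\wc{\CT}\ots\mu_n$ and ${_n}\CT'\simeq\wh{\CT}\ots\mu_n$) shows ${_n}\CT'\simeq\ulhom({_n}\CT,\mu_n^{\ots2})$. Hence the required perfect pairing is exactly the Artin--Verdier duality for such sheaves over an open $p$-adic curve, which is the base input from \cite{HSSz15} and is what \cite{Diego-these} builds on. Your first d\'evissage alone, together with this, already gives the result; the detour through quasi-trivial tori can be dropped.
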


Another input for the proof of global duality is the key \ttes (\ref{sequence: key ex-seq in deg 1 with finite coeff}) below.
We need the following results to insure its exactness.

\begin{lem}\label{lemma: Hen vs completion in finite level}
There are canonical isomorphisms $\BBH^i(K_v^h,C\dtp\Z/n)\simeq\BBH^i(K_v,C\dtp\Z/n)$ for all $i\ge -1$.
\end{lem}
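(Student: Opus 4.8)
The plan is to reduce to a comparison statement for finite Galois modules, and then to invoke the classical fact that a Henselian discretely valued field and its completion have canonically isomorphic absolute Galois groups.

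First, I would use that, since $\tz K_v=0$, every finite group scheme of multiplicative type over $K_v$ or over $K_v^h$ is finite \'etale, hence a finite Galois module. The \distri ${_n}T_1[1]\to {_n}T_2[1]\to C\dtp\Z/n\to {_n}T_1[2]$ is defined already over $K$, hence base changes to $K_v^h$ and to $K_v$, and it exhibits $C\dtp\Z/n$ (up to shift) as the cone of a morphism between the finite \'etale group schemes ${_n}T_1$ and ${_n}T_2$. Comparing the two long exact hypercohomology sequences attached to this triangle and applying the five lemma, it suffices to prove the analogous isomorphisms $H^i(K_v^h,{_n}T_j)\simeq H^i(K_v,{_n}T_j)$ for $j=1,2$ and all $i$; that is, the lemma reduces to the case of an arbitrary finite Galois module in place of $C\dtp\Z/n$.

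The key step is the comparison of Galois groups. The inclusion $K_v^h\hra K_v$ is a dense embedding of Henselian discretely valued fields with the same residue field $\kappa(v)$. By density together with Krasner's lemma, every finite separable extension of $K_v$ is of the form $L'\otimes_{K_v^h}K_v$ for a finite separable extension $L'/K_v^h$, and the same approximation argument gives full faithfulness; hence base change along $K_v^h\to K_v$ is an equivalence on finite \'etale algebras, equivalently $K_v^h\hra K_v$ induces a canonical isomorphism $\gal(\olK_v|K_v)\isoto\gal(\ol{K_v^h}|K_v^h)$. Under this isomorphism, for any $K$-torus $P$ the discrete Galois modules ${_n}P(\olK_v)$ and ${_n}P(\ol{K_v^h})$ correspond, so $H^i(K_v,{_n}P)=H^i(\gal(\olK_v|K_v),{_n}P(\olK_v))$ is identified canonically with $H^i(K_v^h,{_n}P)$ for every $i$, compatibly with the connecting homomorphisms entering the d\'evissage of the first step. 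Feeding this back in yields canonical isomorphisms $\BBH^i(K_v^h,C\dtp\Z/n)\isoto\BBH^i(K_v,C\dtp\Z/n)$ for every $i$, in particular for $i\ge -1$.

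I do not expect a genuine obstacle here: the argument is essentially formal once the Galois-group isomorphism is available. The only points requiring minor care are bookkeeping ones — that the hypercohomology of $C\dtp\Z/n$ over $\Spec L$ is genuinely computed by the Galois cohomology of the associated complex of $\gal(\ol L|L)$-modules (which holds because its cohomology sheaves are torsion and locally constant constructible over $\Spec L$), and that the isomorphism so obtained is compatible with the restriction maps and the excision/localization sequences that will later feed into the key exact sequence~(\ref{sequence: key ex-seq in deg 1 with finite coeff}). The one substantive input is the standard Henselization-versus-completion comparison of absolute Galois groups.
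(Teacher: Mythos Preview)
Your proposal is correct and follows essentially the same approach as the paper: reduce via the \distri ${_n}T_1\to {_n}T_2\to C\dtp\Z/n[-1]\to {_n}T_1[1]$ to the case of a finite \'etale group scheme, and then invoke the fact that $K_v^h$ and $K_v$ have the same absolute Galois group. The paper states this last fact without justification, whereas you spell out the Krasner-type argument behind it, but the structure of the proof is identical.
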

\begin{proof}
Let $F$ be a finite \'etale commutative group scheme over $K$.
Note that $F$ is locally constant in the \'etale topology
 and that $K_v^h$ and $K_v$ have the same absolute Galois group,
therefore $H^i(K_v^h,F)\simeq H^i(K_v,F)$ for any $i\in \Z$.
Now the result follows thanks to the \distri ${_n}T_1\to {_n}T_2\to C\dtp\Z/n[-1]\to {_n}T_1[1]$ by d\'evissage.
\end{proof}

The following proposition is proved in \cite{cat19WA}*{Proposition 1.10}.
Since it frequently plays a role in the demonstrations later, we quote it here for convenience of the reader.

\begin{prop}\label{sequence: long seq cpt supp and 3 arrows lemma}
Let $U\subset X_0$ be a non-empty open subset.
Let $\CA$ be either $\CC$ or $\CC\dtp\Z/n$.
\benuma
\item Let $V\subset U$ be a further non-empty open subset. There is an exact sequence
$$\cdots\to \BBH^i_c(V,\CA)\to \BBH^i_c(U,\CA)\to \bigoplus_{v\in U\setminus V}\BBH^i(\kappa(v),i_v^*\CA)\to \BBH^{i+1}_c(V,\CA)\to\cdots$$
where $i_v:\e\kappa(v)\to U$ is the closed immersion.

\item There is an exact sequence of \hchlg groups
$$\cdots\to \BBH^i_c(U,\CA)\to \BBH^i(U,\CA)\to \bigoplus_{v\notin U}\BBH^i(K_v^h,A)\to \BBH^{i+1}_c(U,\CA)\to \cdots$$
where $K_v^h$ is the Henselization of $K$ with respect to the place $v$ and by abuse of notation we write $A$ for the pull-back of $\CA$ by the natural morphism $\e K_v^h\to U$.

\item There is an exact sequence for $i\ge 1$ if $\CA=\CC$, and for $i\ge -1$ if $\CA=\CC\dtp\Z/n$:
$$\cdots\to \BBH^i_c(U,\CA)\to \BBH^i(U,\CA)\to \bigoplus_{v\notin U}\BBH^i(K_v,A)\to \BBH^{i+1}_c(U,\CA)\to \cdots$$

\item \textup{(Three Arrows Lemma).} There is a \cmdm
\[
\xm{
\BBH^i_c(V,\CA)\ar[r]\ar[d] & \BBH^i_c(U,\CA)\ar[d]\\
\BBH^i(V,\CA) & \BBH^i(U,\CA).\ar[l]
}
\]
\eenum
\end{prop}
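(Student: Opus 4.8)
The plan is to derive all four parts from the two standard distinguished triangles on the proper curve $X$ attached to an open immersion, after recalling that by definition $\BBH^i_c(U,\CA)=\BBH^i(X,j_{U!}\CA)$ where $j_U\colon U\hookrightarrow X$ is the open immersion and $\CA$ is restricted from $X_0$ to $U$; recall also that $X\setminus U$ and $U\setminus V$ are finite sets of closed points, so pushforward along a closed immersion of such a set is exact and preserves cohomology. For (a), let $j\colon V\hookrightarrow U$ be the open immersion with closed complement $i\colon Z\hookrightarrow U$, $Z=U\setminus V$. Pushing the localization triangle $j_!j^*\CA\to\CA\to i_*i^*\CA\to[1]$ on $U$ forward along the exact functor $j_{U!}$, and using $j_{U!}j_!=j_{V!}$ together with $j_{U!}i_*=(i_Z)_*$ (valid since $Z$ is already closed in $X$, with $i_Z\colon Z\hookrightarrow X$), we obtain a triangle $j_{V!}\CA\to j_{U!}\CA\to (i_Z)_*i_Z^*\CA\to[1]$ on $X$. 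Applying $\BBH^\bullet(X,-)$ and identifying $\BBH^i(X,(i_Z)_*i_Z^*\CA)$ with $\bigoplus_{v\in Z}\BBH^i(\kappa(v),i_v^*\CA)$ produces the desired long exact sequence.

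For (b), take $j=j_U\colon U\hookrightarrow X$ with closed complement $Y=X\setminus U$ and apply $\BBH^\bullet(X,-)$ to the triangle $j_!\CA\to Rj_*\CA\to i_*i^*Rj_*\CA\to[1]$; the first two terms give $\BBH^i_c(U,\CA)$ and $\BBH^i(U,\CA)$, while for $v\in Y$ the stalk of $i^*Rj_*\CA$ realizes the hypercohomology $R\Gamma(K_v^h,A)$ of the punctured Henselian trait $\Spec\CO_{X,v}^h\times_XU=\Spec K_v^h$ (via the Hochschild--Serre sequence for $1\to I_v\to\gal(\overline{K_v^h}|K_v^h)\to\gal(\overline{\kappa(v)}|\kappa(v))\to1$ of the Henselian local ring), so the third term is $\bigoplus_{v\notin U}\BBH^i(K_v^h,A)$. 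Part (c) follows by feeding into (b) the comparison $\BBH^i(K_v^h,A)\simeq\BBH^i(K_v,A)$: for $\CA=\CC\dtp\Z/n$ this is \cref{lemma: Hen vs completion in finite level} in degrees $i\ge-1$, and for $\CA=\CC$ it holds in degrees $i\ge1$ because $K_v^h$ and $K_v$ have the same absolute Galois group, so $H^i(K_v^h,T_j)\simeq H^i(K_v,T_j)$ for $i\ge1$ and one concludes by d\'evissage along $T_1\to T_2\to C\to T_1[1]$ (in degree $0$ the groups $H^0$ of the tori genuinely differ, which is why the valid range shrinks for $\CA=\CC$).

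Finally, (d) is purely formal. The three maps are the extension-by-zero map $j_{V!}\CA\to j_{U!}\CA$ attached to $V\subset U$, the forget-supports map $j_{U!}\CA\to Rj_{U*}\CA$, and the restriction $Rj_{U*}\CA\to Rj_{V*}\CA$; commutativity of the square reduces, via the triangle identities for the $(j_!,j^*)$ and $(j^*,Rj_*)$ adjunctions, to the statement that their composite equals the forget-supports map $j_{V!}\CA\to Rj_{V*}\CA$, which one checks on complexes over $X$ before applying $\BBH^\bullet(X,-)$. The only step that is not a formal manipulation of the two triangles is the local computation in (b): identifying the stalk of $i^*Rj_*\CA$ with $R\Gamma(K_v^h,A)$ and comparing the latter with $R\Gamma(K_v,A)$ in the stated degrees; everything else is bookkeeping. (The statement is quoted verbatim from \cite{cat19WA}*{Proposition 1.10}, to which we refer for complete details.)
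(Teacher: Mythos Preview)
Your proposal is correct and actually goes beyond what the paper does: in the paper this proposition is not proved at all but simply quoted from \cite{cat19WA}*{Proposition 1.10}, with no argument given. Your sketch supplies the standard proof via the two localization triangles on $X$, the identification of stalks of $Rj_*$ over a closed point with cohomology of the punctured Henselian trait, and the Henselization-versus-completion comparison (your handling of the degree restrictions in (c) is exactly right: for $\CA=\CC$ the d\'evissage along $T_1\to T_2\to C\to T_1[1]$ needs $H^i$ and $H^{i+1}$ of tori, hence $i\ge 1$). This is almost certainly the same argument as in the cited reference, so there is no genuine difference in approach---you have simply unpacked what the paper leaves as a citation.
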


Put $\BBD^{i}_K(U,\CC\dtp\Z/n)\ce \Im\big(\BBH^i_c(U,\CC\dtp\Z/n)\to \BBH^i(K,C\dtp\Z/n)\big)$.
Now we arrive at the key \ttes for the proof of global duality between the respective \TateSha groups of $C\dtp\Z/n$ and $C'\dtp\Z/n$.

\begin{prop}\label{proposition: key ex-seq in deg -1 to 1 with finite coeff}
There is an \ttes for $-1\le i\le 1$
\be\label{sequence: key ex-seq in deg 1 with finite coeff}
\bigoplus_{v\in X^{(1)}}\BBH^{i}(K_v,C\dtp\Z/n)\to \BBH^{i+1}_c(U,\CC\dtp\Z/n)\to \BBD^{i+1}_K(U,\CC\dtp\Z/n)\to 0.
\ee
\end{prop}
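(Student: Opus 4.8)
The plan is to derive the sequence from \cref{sequence: long seq cpt supp and 3 arrows lemma} together with the identification $\BBH^{i+1}(K,C\dtp\Z/n)=\varinjlim_V\BBH^{i+1}(V,\CC\dtp\Z/n)$, the filtered colimit running over nonempty open $V\subseteq U$. The middle arrow is to be assembled from the boundary maps $\partial^V_v\colon\BBH^i(K_v,C\dtp\Z/n)\to\BBH^{i+1}_c(V,\CC\dtp\Z/n)$ of part (c) of that proposition: for $v\notin U$ one takes $V=U$, while for $v\in U$ one chooses some nonempty open $V\subseteq U$ with $v\notin V$ and follows $\partial^V_v$ by the extension-by-zero map $\BBH^{i+1}_c(V,\CC\dtp\Z/n)\to\BBH^{i+1}_c(U,\CC\dtp\Z/n)$, the choice of $V$ being immaterial by part (a). Since $\BBD^{i+1}_K(U,\CC\dtp\Z/n)$ is by definition the image of $\BBH^{i+1}_c(U,\CC\dtp\Z/n)$ in $\BBH^{i+1}(K,C\dtp\Z/n)$, exactness at the right end is automatic, so everything comes down to identifying the kernel $\CK$ of $\BBH^{i+1}_c(U,\CC\dtp\Z/n)\to\BBH^{i+1}(K,C\dtp\Z/n)$ with the image of the middle arrow.

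That the image of the middle arrow is contained in $\CK$ should follow because the composite of $\partial^V_v$ with $\BBH^{i+1}_c(V,\CC\dtp\Z/n)\to\BBH^{i+1}(V,\CC\dtp\Z/n)$ vanishes by exactness in part (c), and this vanishing propagates to $\BBH^{i+1}(K,C\dtp\Z/n)$ through the Three Arrows Lemma (part (d)). For the reverse inclusion, I would fix $x\in\CK$, let $\eta$ denote its image in $\BBH^{i+1}(U,\CC\dtp\Z/n)$, and use that $\eta$ dies in the colimit to choose a nonempty open $V\subseteq U$ with $\eta|_V=0$ in $\BBH^{i+1}(V,\CC\dtp\Z/n)$.

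The crux is that the residue of $x$ at each $v\in U\setminus V$ in the localization sequence of part (a) vanishes. This residue lives in $\BBH^{i+1}(\kappa(v),i_v^*(\CC\dtp\Z/n))$, which one identifies with $\BBH^{i+1}(\CO_v,\CC\dtp\Z/n)$ exactly as in the proof of \cref{lemma: inj of cohomology from Ov to Kv for cpx of tori in finite level}; under this identification it is the image of $x$ along $\BBH^{i+1}_c(U,\CC\dtp\Z/n)\to\BBH^{i+1}(U,\CC\dtp\Z/n)\to\BBH^{i+1}(\CO_v,\CC\dtp\Z/n)$, since the part (a) residue map and this composite are both induced by the adjunction $\CC\dtp\Z/n\to i_{v*}i_v^*(\CC\dtp\Z/n)$, compact-support and ordinary cohomology agreeing on sheaves supported at $v$. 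Following this composite by $\BBH^{i+1}(\CO_v,\CC\dtp\Z/n)\to\BBH^{i+1}(K_v,C\dtp\Z/n)$ recovers the restriction $\BBH^{i+1}_c(U,\CC\dtp\Z/n)\to\BBH^{i+1}(K,C\dtp\Z/n)\to\BBH^{i+1}(K_v,C\dtp\Z/n)$ (as $\Spec K_v\to U$ factors through both $\Spec\CO_v$ and $\Spec K$), which kills $x$; and $\BBH^{i+1}(\CO_v,\CC\dtp\Z/n)\to\BBH^{i+1}(K_v,C\dtp\Z/n)$ is injective for $-2\le i+1\le1$ by \cref{lemma: inj of cohomology from Ov to Kv for cpx of tori in finite level} while $\BBH^2(\CO_v,\CC\dtp\Z/n)=0$ by \cref{remark: on degrees I}, so the residue of $x$ at $v$ is zero.

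With all these residues vanishing, part (a) lifts $x$ to some $x'\in\BBH^{i+1}_c(V,\CC\dtp\Z/n)$; by the Three Arrows Lemma the image of $x'$ in $\BBH^{i+1}(V,\CC\dtp\Z/n)$ equals $\eta|_V=0$, so part (c) applied to $V$ yields $x'=\partial^V(\zeta)$ for some $\zeta\in\bigoplus_{v\notin V}\BBH^i(K_v,C\dtp\Z/n)$, whence $x$ is the image of $\zeta$ under the middle arrow by construction. I expect the residue computation above to be the main obstacle: it rests on matching the localization (residue) sequence of part (a) in compact-support cohomology with restriction to the complete local rings $\CO_v$, and on the identification of $i_v^*(\CC\dtp\Z/n)$-cohomology over $\kappa(v)$ with $\CC\dtp\Z/n$-cohomology over $\CO_v$; the remaining bookkeeping — that the middle arrow is independent of the auxiliary opens and compatible with shrinking $U$ — is routine once \cref{sequence: long seq cpt supp and 3 arrows lemma} is in hand.
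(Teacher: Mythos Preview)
Your proposal is correct and follows essentially the same approach as the paper: construct the first map via the boundary maps of \cref{sequence: long seq cpt supp and 3 arrows lemma}(c) and covariant functoriality, then for exactness at the middle use the localization sequence of part (a) together with the injectivity from \cref{lemma: inj of cohomology from Ov to Kv for cpx of tori in finite level} (and the vanishing of $\BBH^2(\CO_v,\CC\dtp\Z/n)$ from \cref{remark: on degrees I} for the case $i=1$) to kill the residues and lift to a smaller $V$, concluding via part (c). Your bookkeeping is in fact slightly cleaner than the paper's: you fix $V$ so that $\eta|_V=0$ \emph{before} lifting, whereas the paper first lifts and then shrinks $V$, and you spell out explicitly why the residue map factors through restriction to $\CO_v$, which the paper packages into a reference to the analogous argument in \cite{HSz16}*{Proposition 4.2}.
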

\begin{proof}
There is a well-defined map $\bigoplus_{v\in X^{(1)}}\BBH^{i}(K_v,C\dtp\Z/n)\to \BBH^{i+1}_c(U,\CC\dtp\Z/n)$ in a similar way as \cite{HSz16}*{pp.~11}.
Let us recall the construction for the convenience of the readers.
Suppose $\al\in\bigoplus_{v\in X^{(1)}}\BBH^{i}(K_v,C\dtp\Z/n)$ lies in
$\bigoplus_{v\notin V}\BBH^{i}(K_v,C\dtp\Z/n)$ for some non-empty open subset $V$ of $U$.
By \cref{sequence: long seq cpt supp and 3 arrows lemma}(3),
we can send $\al$ to $\BBH^{i+1}_c(V,\CC\dtp\Z/n)$ and hence to $\BBH^{i+1}_c(U,\CC\dtp\Z/n)$
by covariant functoriality of $\BBH^{i+1}_c(-,\CC\dtp\Z/n)$.
The following \cmdm for $W\subset V$
\[
\xm{
\bigoplus\limits_{v\notin W}\BBH^i(K_v,C\dtp\Z/n)\ar[r] & \BBH^{i+1}_c(W,\CC\dtp\Z/n)\ar[d]\\
\bigoplus\limits_{v\notin V}\BBH^i(K_v,C\dtp\Z/n)\ar[r]\ar[u] & \BBH^{i+1}_c(W,\CC\dtp\Z/n)
}
\]
shows that the construction does not depend on the choice of $V$.
Finally, the sequence (\ref{sequence: key ex-seq in deg 1 with finite coeff}) is a complex and the square in diagram (\ref{diagram: provides injectivity in degree 1,2 in finite level}) below commutes by the same argument as in the proof of \cite{HSz16}*{Proposition 4.2}.

Conversely, take $\al\in\ker\big(\BBH^{i+1}_c(U,\CC\dtp\Z/n)\to \BBD^{i+1}_K(U,\CC\dtp\Z/n)\big)$.
Let $V\subset U$ be a \neosbk.
We consider the following diagram for $-1\le i\le 1$:
\beac\label{diagram: provides injectivity in degree 1,2 in finite level}
\xm{
\BBH^{i+1}_c(V,\CC\dtp\Z/n)\ar[r] & \BBH^{i+1}_c(U,\CC\dtp\Z/n)\ar[r]\ar[d]
                                  & \bigoplus\limits_{v\in U\setminus V}\BBH^{i+1}(\kappa(v),\CC\dtp\Z/n)\ar[d] \\
                                  & \BBH^{i+1}(K,C\dtp\Z/n)\ar[r]
                                  & \bigoplus\limits_{v\in U\setminus V}\BBH^{i+1}(K_v,C\dtp\Z/n).
}
\eeac
The upper row is exact by \cref{sequence: long seq cpt supp and 3 arrows lemma}(1).
The left vertical arrow is just the composition
\[\BBH^{i+1}_c(U,\CC\dtp\Z/n)\to \BBH^{i+1}(U,\CC\dtp\Z/n)\to \BBH^{i+1}(K,C\dtp\Z/n).\]
The right vertical arrow is given by the composition
\[ \BBH^{i+1}(\kappa(v),\CC\dtp\Z/n)\simeq \BBH^{i+1}(\CO_{v},\CC\dtp\Z/n)\to \BBH^{i+1}(K_v,C\dtp\Z/n).\]
By \cref{lemma: inj of cohomology from Ov to Kv for cpx of tori in finite level}, the right vertical arrow in diagram (\ref{diagram: provides injectivity in degree 1,2 in finite level}) is injective.

Finally, thanks to the exactness of the upper row in diagram (\ref{diagram: provides injectivity in degree 1,2 in finite level}), $\al$ comes from an element $\beta\in\BBH^{i+1}_c(V,\CC\dtp\Z/n)$ by diagram chasing.
Since $\beta$ goes to zero in $\BBH^{i+1}(K,C\dtp\Z/n)$, we may choose $V$ sufficiently small such that $\beta$ already maps to zero in $\BBH^{i+1}(V,\CC\dtp\Z/n)$.
Now the proof is completed by \cref{sequence: long seq cpt supp and 3 arrows lemma}(3).
\end{proof}

Let $\Sha^i(C\dtp\Z/n)\ce\ker\big(\BBH^i(K,C\dtp\Z/n)\to\prod_{v\in X^{(1)}}\BBH^i(K_v,C\dtp\Z/n)\big)$.
Now we construct a \ppair $\Sha^i(C\dtp\Z/n)\times\Sha^{1-i}(C'\dtp\Z/n)\to \QZ$ of finite groups for $i=-1,0$.

\begin{thm}\label{duality: global-finite level}
There is a \ppair of finite groups for $i=-1,0:$
\[\Sha^i(C\dtp\Z/n)\times\Sha^{1-i}(C'\dtp\Z/n)\to \QZ.\]
\end{thm}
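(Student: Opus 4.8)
The plan is to deduce the global duality $\Sha^i(C\dtp\Z/n)\times\Sha^{1-i}(C'\dtp\Z/n)\to\QZ$ from the \ArtVer duality of \cref{duality: AV-finite level} together with the key exact sequence (\ref{sequence: key ex-seq in deg 1 with finite coeff}), following the by-now standard pattern (cf. \cite{HSz16}*{Proposition 4.2}). First I would identify $\Sha^{i}(C\dtp\Z/n)$ with $\Sha^{i}_U(C\dtp\Z/n)\ce\ker\big(\BBH^i(U,\CC\dtp\Z/n)\to\bigoplus_{v\in U}\BBH^i(K_v,C\dtp\Z/n)\oplus\bigoplus_{v\notin U}\BBH^i(K_v,C\dtp\Z/n)\big)$ for $U$ sufficiently small; the point is that for $U$ shrinking to the generic point, $\BBH^i(U,\CC\dtp\Z/n)\to\BBH^i(K,C\dtp\Z/n)$ becomes an isomorphism onto the elements unramified almost everywhere, and any class in $\Sha^i$ is automatically unramified at all but finitely many places, so one absorbs the bad places into $X\setminus U$. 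This reduces the problem to an assertion about $U$-cohomology. The same discussion applies verbatim to $C'\dtp\Z/n$ with $\Sha^{1-i}$.

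Next I would show that, under \ArtVer duality $\BBH^i(U,\CC\dtp\Z/n)\times\BBH^{1-i}_c(U,\CC'\dtp\Z/n)\to\QZ$, the subgroup $\Sha^i_U(C\dtp\Z/n)$ is exactly the orthogonal complement of $\BBD^{1-i}_K(U,\CC'\dtp\Z/n)=\Im\big(\BBH^{1-i}_c(U,\CC'\dtp\Z/n)\to\BBH^{1-i}(K,C'\dtp\Z/n)\big)$. One direction uses the compatibility of the local and global pairings together with \cref{proposition: key ex-seq in deg -1 to 1 with finite coeff}: a class killed by all local invariants pairs trivially with everything in the image of $\bigoplus_v\BBH^{i-1}(K_v,C'\dtp\Z/n)\to\BBH^i_c(U,\CC'\dtp\Z/n)$, and (\ref{sequence: key ex-seq in deg 1 with finite coeff}) says this image is precisely the kernel of $\BBH^i_c\to\BBD^i_K$; so such a class lands in the annihilator of $\ker(\BBH^{1-i}_c(U,\CC'\dtp\Z/n)\to\BBD^{1-i}_K)$, which by exactness of \ArtVer duality is the preimage of $\BBD^{1-i}_K$-orthogonal, i.e. it factors through the dual of $\BBD^{1-i}_K(U,\CC'\dtp\Z/n)$. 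The reverse inclusion is the symmetric argument with the roles of $C$ and $C'$ exchanged. Passing to the limit over $U$ and using that each $\BBH^i(U,\CC\dtp\Z/n)$ is finite (\ArtVer duality is a pairing of finite groups) one gets a perfect pairing $\Sha^i(C\dtp\Z/n)\times\Sha^{1-i}(C'\dtp\Z/n)\to\QZ$; finiteness of both sides is automatic since they are subquotients of finite groups.

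The main obstacle I anticipate is bookkeeping the range of degrees and making the limit argument clean. The statement only claims $i=-1,0$, so I must check that the key sequence (\ref{sequence: key ex-seq in deg 1 with finite coeff}) and \cref{lemma: inj of cohomology from Ov to Kv for cpx of tori in finite level} are available in the degrees actually used — the sequence for $C\dtp\Z/n$ is stated for $-1\le i\le1$ and the injectivity lemma for $-2\le i\le1$, and the complementary complex $C'\dtp\Z/n$ needs degrees $1-i$, so for $i=-1,0$ I need degrees $1,2$ on the $C'$-side, which sits at the upper edge of \cref{remark: on degrees I}(2). I would also need to check that $\Sha^0(C\dtp\Z/n)$ genuinely agrees with its $U$-version, i.e. that no subtlety from $\BBH^0$ (which need not vanish at the boundary places) spoils the identification; here \cref{lemma: Hen vs completion in finite level} and the exact sequences of \cref{sequence: long seq cpt supp and 3 arrows lemma}(2)--(3) do the work, and one must be slightly careful that the relevant sequence of \cref{sequence: long seq cpt supp and 3 arrows lemma}(3) for $\CC\dtp\Z/n$ starts at $i\ge-1$, which is exactly where we need it. Modulo this degree chase, the argument is a routine transport along \ArtVer duality, and I would present it compactly.
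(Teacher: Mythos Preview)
Your approach is essentially the same as the paper's: combine \ArtVer duality (\cref{duality: AV-finite level}) with the key exact sequence (\ref{sequence: key ex-seq in deg 1 with finite coeff}) and local duality, then pass to the limit over $U$. Two remarks will tighten the argument.

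First, the ``one direction / symmetric argument'' is unnecessary. The paper sets up a single commutative diagram
\[
\xymatrix{
0\ar[r] & \rmD^i_{\sh}(U,\CC\dtp\Z/n)\ar[r]\ar@{-->}[d] & \BBH^i(U,\CC\dtp\Z/n)\ar[r]\ar[d]^{\simeq} &
\prod_{v}\BBH^i(K_v,C\dtp\Z/n)\ar[d]\\
0\ar[r] & \BBD^{1-i}_K(U,\CC'\dtp\Z/n)^D\ar[r] & \BBH^{1-i}_c(U,\CC'\dtp\Z/n)^D\ar[r] &
\big(\bigoplus_v\BBH^{-i}(K_v,C'\dtp\Z/n)\big)^D
}
\]
in which the middle vertical is an isomorphism by \cref{duality: AV-finite level}, the right vertical by \cref{duality: local-finite level}, and the lower row is exact by dualizing (\ref{sequence: key ex-seq in deg 1 with finite coeff}) applied to $C'$. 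This gives $\rmD^i_{\sh}(U,\CC\dtp\Z/n)\simeq\BBD^{1-i}_K(U,\CC'\dtp\Z/n)^D$ in one stroke, without invoking the $C\leftrightarrow C'$ symmetry.

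Second, your degree worry is slightly misplaced: the sequence (\ref{sequence: key ex-seq in deg 1 with finite coeff}) is applied to $C'$ in degree $-i$ (feeding into $\BBH^{1-i}_c$), so for $i=-1,0$ you need it at degrees $1,0$, both safely inside the range $-1\le\,\cdot\,\le 1$ of \cref{proposition: key ex-seq in deg -1 to 1 with finite coeff}. What you \emph{do} gloss over is the limit step: the point is that $\{\BBD^{1-i}_K(U,\CC'\dtp\Z/n)\}_{U\subset X_0}$ is a \emph{decreasing} family of \emph{finite} groups (quotients of the finite $\BBH^{1-i}_c(U,\CC'\dtp\Z/n)$), hence stabilizes at some $U_0$, and the stable value is $\Sha^{1-i}(C'\dtp\Z/n)$ by \cref{sequence: long seq cpt supp and 3 arrows lemma}(3). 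Then $\drl_U\rmD^i_{\sh}(U,\CC\dtp\Z/n)=\Sha^i(C\dtp\Z/n)$ gives the result.
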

\begin{proof}
We define $\rmD^i_{\sh}(U,\CC\dtp\Z/n)$ to be the kernel of the last arrow of the upper row in the following diagram
\[\xm{
0\ar[r] & \rmD^i_{\sh}(U,\CC\dtp\Z/n)\ar[r]\ar@{-->}[d] & \BBH^i(U,\CC\dtp\Z/n)\ar[r]\ar[d] &
\prod\limits_{v\in X^{(1)}}\BBH^i(K_v,C\dtp\Z/n)\ar[d]\\
0\ar[r] & \BBD^{1-i}_K(U,\CC'\dtp\Z/n)^D\ar[r] & \BBH^{1-i}_c(U,\CC'\dtp\Z/n)^D\ar[r] &
\big(\bigoplus\limits_{v\in X^{(1)}}\BBH^{-i}(K_v,C'\dtp\Z/n)\big)^D.
}\]
The middle vertical arrow is an isomorphism by \cref{duality: AV-finite level} and that for the right one by \cref{duality: local-finite level}.
It follows that the left vertical arrow is an isomorphism as well.
Since the group $\BBH^{1-i}_c(U,\CC'\dtp\Z/n)$ is finite
and the functor $\BBH^{1-i}_c(-,\CC'\dtp\Z/n)$ is covariant,
$\{\BBD^{1-i}_K(U,\CC'\dtp\Z/n)\}_{U\subset X_0}$ forms a decreasing family of finite abelian groups.
Hence there exists a non-empty open subset $U_0\subset X_0$
such that $\BBD^{1-i}_K(U,\CC'\dtp\Z/n)=\BBD^{1-i}_K(U_0,\CC'\dtp\Z/n)$ for all non-empty open subset $U\subset U_0$,
i.e. we deduce that $\BBD^{1-i}_K(U,\CC'\dtp\Z/n)\simeq\Sha^{1-i}(C'\dtp\Z/n)$ for all $U\subset U_0$.
Now passing to the direct limit over all $U$ of the isomorphism
$\rmD^i_{\sh}(U,\CC\dtp\Z/n)\simeq \Sha^{1-i}(C'\dtp\Z/n)^D$
yields
$\Sha^i(C\dtp\Z/n)\simeq \drl_n\rmD^i_{\sh}(U,\CC\dtp\Z/n)\simeq \Sha^{1-i}(C'\dtp\Z/n)^D$.
\end{proof}

\begin{rmk}\label{remark: on degrees II}
Actually \cref{duality: global-finite level} yields perfect pairings for $-1\le i\le 2$ by symmetry.
We shall see later \cref{sequence: finite PT seq 15-term} that $\Sha^{-2}(C\dtp\Z/n)=0$.
For $i\le -3$ and $i\ge 3$, we consider the \distri ${_n}T_1[1]\to {_n}T_2[1]\to C\dtp\Z/n\to {_n}T_1[2]$.
Applying \cref{remark: on degrees I}(1), we see that $H^i(K,{_n}T_j)=0$ for $i\ge 4$ and $j=1,2$ by the Kummer sequences.
Now it follows that $\BBH^i(K,C\dtp\Z/n)=0$ for $i\le -3$ and $i\ge 3$ by d\'evissage.
\end{rmk}

\subsection{The Poitou--Tate sequence}
\begin{lem}\label{lemma: localization sequence I}
Let $\CA$ be either $\CC$ or $\CC\dtp\Z/n$ over $U\subset X_0$ and let $A$ be its generic fibre.
If $\al\in \BBH^i(V,\CA)$ is such that $\al_v\in \BBH^i(K_v,A)$ belongs to $\BBH^i(\CO_v,\CA)$ for all $v\in U\setminus V$, then $\al\in \Im\big(\BBH^i(U,\CA)\to \BBH^i(V,\CA)\big)$ for $i\in\Z$.
\end{lem}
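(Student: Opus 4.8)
The plan is to deduce the statement from the localization (excision) long exact sequence in \'etale cohomology attached to the open--closed decomposition of $U$ into $V$ and the finite set $U\setminus V$ of closed points. For any bounded complex $\CA$ of \'etale sheaves on $U$ one has an exact sequence
\[
\cdots \to \bigoplus_{v\in U\setminus V}\BBH^i_v(U,\CA) \to \BBH^i(U,\CA) \xrightarrow{\ \res\ } \BBH^i(V,\CA) \xrightarrow{\ \partial\ } \bigoplus_{v\in U\setminus V}\BBH^{i+1}_v(U,\CA) \to \cdots,
\]
where $\BBH^j_v(U,\CA)$ denotes cohomology with support at $v$; the direct sums appear because $U\setminus V$ is a disjoint union of closed points. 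By \'etale excision $\BBH^j_v(U,\CA)$ depends only on an \'etale neighbourhood of $v$, hence is computed on $\Spec\CO_{X,v}$, and I will identify it with the group $\BBH^j_v(\Spec\CO_v,\CA)$ attached to the completion (this is the delicate point, discussed below). Granting this, it suffices to prove that $\partial(\al)=0$: then $\al$ lies in $\Im\big(\res\colon\BBH^i(U,\CA)\to\BBH^i(V,\CA)\big)$, which is exactly the assertion.

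To see $\partial(\al)=0$, fix $v\in U\setminus V$ and consider the localization sequence for the open--closed decomposition of $\Spec\CO_v$ into its generic point $\Spec K_v$ and its closed point:
\[
\cdots \to \BBH^i_v(\Spec\CO_v,\CA) \to \BBH^i(\CO_v,\CA) \xrightarrow{\ \res_v\ } \BBH^i(K_v,A) \xrightarrow{\ \partial_v\ } \BBH^{i+1}_v(\Spec\CO_v,\CA) \to \cdots.
\]
Exactness shows that $\partial_v$ vanishes on the subgroup $\BBH^i(\CO_v,\CA)\subseteq\BBH^i(K_v,A)$. By functoriality of the localization sequence with respect to the morphism $\Spec\CO_v\to U$ (which sends the closed point to $v$ and the generic point into $V$), the $v$-component of $\partial(\al)$ is carried, under the identification $\BBH^{i+1}_v(U,\CA)\simeq\BBH^{i+1}_v(\Spec\CO_v,\CA)$, to $\partial_v(\al_v)$, where $\al_v\in\BBH^i(K_v,A)$ is the localization of $\al$. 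By hypothesis $\al_v\in\BBH^i(\CO_v,\CA)$, so $\partial_v(\al_v)=0$; as $v$ was arbitrary, $\partial(\al)=0$.

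The step requiring care — and the one I expect to be the main obstacle — is the identification $\BBH^j_v(U,\CA)\simeq\BBH^j_v(\Spec\CO_v,\CA)$, i.e. the insensitivity of cohomology with support at $v$ to replacing the henselization of the local ring by its completion. For $\CA=\CC\dtp\Z/n$ this follows by comparing the two localization sequences, using $\BBH^j(K_v^h,C\dtp\Z/n)\simeq\BBH^j(K_v,C\dtp\Z/n)$ (\cref{lemma: Hen vs completion in finite level}) together with the invariance $\BBH^j(\CO_v,\CC\dtp\Z/n)\simeq\BBH^j(\kappa(v),\CC\dtp\Z/n)$ already invoked above (via \cite{MilneADT}*{II, Proposition 1.1(b)}), which holds verbatim over $\CO_v^h$ as well. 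For $\CA=\CC$ one reduces, by the distinguished triangle $\CT_1\to\CT_2\to\CC\to\CT_1[1]$ and d\'evissage, to the corresponding statement for a single $X_0$-torus $\CP$: at the level of fields $H^j(K_v^h,P)\simeq H^j(K_v,P)$ because $K_v^h$ and $K_v$ have the same absolute Galois group and $P$ is defined over $K$, while at the level of rings one uses $H^j(\CO_v^h,\CP)\simeq H^j(\kappa(v),\CP)\simeq H^j(\CO_v,\CP)$ for $j\ge 1$ (invariance under specialisation for the smooth group scheme $\CP$), the low-degree contributions to $\BBH^\bullet_v$ either vanishing or cancelling in the two-term complex. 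Alternatively, one can avoid cohomology with support entirely and derive the lemma by a diagram chase combining \cref{sequence: long seq cpt supp and 3 arrows lemma}(1)--(3) with the Three Arrows Lemma, at the price of the same henselization-versus-completion bookkeeping.
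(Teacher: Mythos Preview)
Your approach is essentially the same as the paper's: both use the localization sequence for the pair $V\subset U$, identify the support cohomology $\BBH^{i+1}_v(U,\CA)$ with that over the local ring at $v$ via excision, and then compare with the localization sequence for $\Spec K_v\subset\Spec\CO_v$, reducing the claim to a diagram chase. The paper's write-up differs from yours only in how the delicate step is packaged: it first identifies $\BBH^{i+1}_v(U,\CA)\simeq\BBH^{i+1}_v(\CO_v^h,\CA)$ by excision (\cite{MilneEC}*{pp.~93, 1.28}, extended to complexes by the $5$-lemma), and then invokes \cite{DH18}*{Lemma 2.6} to obtain the isomorphism $\BBH^{i+1}_v(\CO_v^h,\CA)\simeq\BBH^{i+1}_v(\CO_v,\CA)$ directly, for both $\CA=\CC$ and $\CA=\CC\dtp\Z/n$.

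Your discussion of this henselization-versus-completion step is fine for $\CA=\CC\dtp\Z/n$, but for $\CA=\CC$ your sketch is not quite complete. The d\'evissage to a single torus $\CP$ is the right move, but the sentence ``the low-degree contributions to $\BBH^\bullet_v$ either vanishing or cancelling in the two-term complex'' glosses over the genuine issue: in degree $0$ one has $\CP(\CO_v^h)\ne\CP(\CO_v)$ in general, so the comparison of the two localization sequences cannot be concluded by a naive $5$-lemma in the lowest degrees. One must instead argue (as in \cite{DH18}*{Lemma 2.6}) that the \emph{cokernels} $P(K_v^h)/\CP(\CO_v^h)$ and $P(K_v)/\CP(\CO_v)$ agree (both compute the support group $H^1_v$ for a smooth affine group scheme), or reduce to the case of $\G_m$ by passing to a splitting extension and using that $(K_v^h)^\times/(\CO_v^h)^\times\simeq\Z\simeq K_v^\times/\CO_v^\times$. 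Once you make this step precise, your proof is complete and coincides with the paper's.
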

\begin{proof}
The localization sequences \cite{Fu11}*{Proposition 5.6.11} for the respective pairs of open immersions $V\subset U$ and $\e K_v\subset \e \CO_v$
(actually here we do the same argument as loc. cit. by replacing injective resolutions by injective Cartan--Eilenberg resolutions)
together with \cite{MilneEC}*{pp.~93, 1.28}
(here we use the $5$-lemma to pass from a single sheaf to a short complex of sheaves)
induce the following \cmdm with exact rows
\[\xm{
\BBH^i(U,\CA)\ar[r]\ar[d] & \BBH^i(V,\CA)\ar[r]\ar[d] &
\bigoplus\limits_{v\in U\setminus V}\BBH^{i+1}_v(\CO_v^h,\CA)\ar[d]\\
\bigoplus\limits_{v\in U\setminus V}\BBH^{i}(\CO_v,\CA)\ar[r] &
\bigoplus\limits_{v\in U\setminus V}\BBH^{i}(K_v,A)\ar[r] &
\bigoplus\limits_{v\in U\setminus V}\BBH^{i+1}_v(\CO_v,\CA).
}\]
By \cite{DH18}*{Lemma 2.6} the right vertical map is an isomorphism, so a diagram chasing yields the desired result.
\end{proof}

\begin{lem}\label{sequence: finite PT sequence row 3 4 5}
There are \ttess for $n\ge 1$ and $-1\le i\le 2$:
\[\BBH^{i}(K,C\dtp\Z/n)\to \BBP^{i}(K,C\dtp\Z/n)\to \BBH^{-i}(K,C'\dtp\Z/n)^D.\]
\end{lem}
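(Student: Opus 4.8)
The target is, for each $-1\le i\le 2$, exactness at the middle term of
\[
\BBH^{i}(K,C\dtp\Z/n)\to \BBP^{i}(K,C\dtp\Z/n)\to \BBH^{-i}(K,C'\dtp\Z/n)^D.
\]
The natural strategy is to identify $\BBP^i$ with a direct limit over small open $U\subset X_0$ of terms in the compact-support localization sequence, and then to use Artin--Verdier duality (\cref{duality: AV-finite level}) to convert statements about $\BBH^*_c(U,\CC\dtp\Z/n)$ into statements about $\BBH^{1-*}(U,\CC'\dtp\Z/n)^D$. Concretely, the first step is to observe that the composite map in the statement is the limit of the composites $\BBH^i(K,C\dtp\Z/n)\leftarrow\BBH^i(U,\CC\dtp\Z/n)\to\bigoplus_{v\notin U}\BBH^i(K_v,C\dtp\Z/n)$ fed into $\BBH^{-i}(K,C'\dtp\Z/n)^D$, and that this composite is zero because of the pairing between $\BBH^i_c(U,\CC'\dtp\Z/n)$-type terms being local-global compatible; this is the ``complex'' half and it follows the same bookkeeping as in \cite{HSz16}.

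For the ``exact'' half, I would take $\al=(\al_v)\in\BBP^i(K,C\dtp\Z/n)$ in the kernel of the map to $\BBH^{-i}(K,C'\dtp\Z/n)^D$. By definition of the restricted product there is a non-empty open $V\subset X_0$ with $\al_v\in\BBH^i(\CO_v,\CC\dtp\Z/n)$ for all $v\in V$. The key is then to produce a global class in $\BBH^i(K,C\dtp\Z/n)$ mapping to $\al$. The mechanism: using \cref{sequence: long seq cpt supp and 3 arrows lemma}(3) one lifts the finitely many bad components of $\al$ to some $\beta\in\BBH^{i+1}_c(V,\CC\dtp\Z/n)$, or rather — keeping indices straight — one works with $\BBH^i_c(V,\CC\dtp\Z/n)\to\BBH^i(V,\CC\dtp\Z/n)\to\bigoplus_{v\notin V}\BBH^i(K_v,C\dtp\Z/n)$ and its shift, and one must show the obstruction to lifting $\al$ along $\BBH^i_c(V,\cdot)\to\bigoplus\BBH^i(K_v,\cdot)$ (an element of $\BBH^{i+1}_c(V,\CC\dtp\Z/n)$, or of $\BBD^{i+1}_K$) vanishes. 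This is exactly where the hypothesis ``$\al$ pairs to zero against $\BBH^{-i}(K,C'\dtp\Z/n)$'' enters: by Artin--Verdier duality $\BBH^{i+1}_c(V,\CC\dtp\Z/n)\simeq\BBH^{-i}(V,\CC'\dtp\Z/n)^D$, and the image of the obstruction in $\BBD^{i+1}_K(V,\CC\dtp\Z/n)$ — which by \cref{duality: global-finite level}'s method stabilizes to $\Sha^{-i}(C'\dtp\Z/n)^D$ for $V$ small — is precisely the functional $\al$ restricted to $\Sha$-classes, hence zero. So after shrinking $V$ the obstruction dies and $\al$ lifts to $\BBH^i_c(V,\CC\dtp\Z/n)$, thence to $\BBH^i(V,\CC\dtp\Z/n)$, and finally via \cref{lemma: localization sequence I} — since the remaining components are now integral at every $v\in V$ — it comes from $\BBH^i(U,\CC\dtp\Z/n)$ for a possibly smaller $U$, and then maps into $\BBH^i(K,C\dtp\Z/n)$ as required.

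**The main obstacle.** The delicate point is the stabilization/limit argument: one needs that the image $\BBD^{i+1}_K(V,\CC\dtp\Z/n)$ becomes a fixed finite group $\simeq\Sha^{-i}(C'\dtp\Z/n)^D$ for all sufficiently small $V$ (this is the finiteness-plus-decreasing-family argument already used in the proof of \cref{duality: global-finite level}), and that under this identification the pairing of $\al$ with the obstruction class really is the evaluation of $\al$, viewed in $\BBH^{-i}(K,C'\dtp\Z/n)^D$, on the corresponding $\Sha$-class. Making this compatibility precise requires carefully matching the Artin--Verdier pairing on $U$ with the local pairings via \cref{sequence: long seq cpt supp and 3 arrows lemma}(2)--(3) and \cref{lemma: Hen vs completion in finite level} (to replace $K_v^h$ by $K_v$), so that the relevant squares commute; this is the same pattern as in \cite{HSz16}*{Proposition 4.2} but must be checked in each degree $-1\le i\le 2$ using \cref{remark: on degrees I,remark: on degrees II} to know which terms vanish (e.g.\ $\Sha^{-2}=0$, and the outer degrees collapse). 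I expect the remaining diagram chases to be routine once this identification is in hand.
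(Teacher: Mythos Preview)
Your approach has the right ingredients --- the localization exact sequence from \cref{sequence: long seq cpt supp and 3 arrows lemma}, Artin--Verdier duality, and local--global compatibility --- but there is a genuine gap in the lifting step. Suppose you have shown the obstruction in $\BBH^{i+1}_c(V,\CC\dtp\Z/n)$ vanishes and obtained $\beta\in\BBH^i(V,\CC\dtp\Z/n)$ with $\beta_v=\al_v$ for all $v\notin V$. At the infinitely many $v\in V$, both $\beta_v$ and $\al_v$ lie in $\BBH^i(\CO_v,\CC\dtp\Z/n)$ but need not coincide; the difference $\al-\text{im}(\beta)$ is a family supported on $V$ with integral components, and there is no reason it comes from a global class. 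Your invocation of \cref{lemma: localization sequence I} does not help: that lemma extends a class on $V$ to a \emph{larger} open $U\supset V$ given integrality on $U\setminus V$, whereas you need to match prescribed integral data at every $v\in V$. (Separately, the detour through $\BBD^{i+1}_K$ and its stabilization is superfluous and, as written, insufficient: knowing the obstruction dies in the quotient $\BBD^{i+1}_K$ does not force it to vanish in $\BBH^{i+1}_c(V,\CC\dtp\Z/n)$. One can in fact show the obstruction itself is zero directly --- for any $\gamma\in\BBH^{-i}(V,\CC'\dtp\Z/n)$ one has $\sum_{v\notin V}\langle\al_v,\gamma_v\rangle=\sum_v\langle\al_v,\gamma_v\rangle=0$ since the terms for $v\in V$ vanish by \cref{duality: annihilators of local pairing in finite level} --- but this still leaves the matching problem above.)

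The paper avoids this entirely by organizing the limits differently. For each pair $V\subset U\subset X_0$ it uses \cref{sequence: long seq cpt supp and 3 arrows lemma}(3) together with \cref{lemma: localization sequence I} to get an exact sequence
\[
\BBH^{i}(U,\CC\dtp\Z/n)
\to
\tstprodlim_{v\notin U}\BBH^{i}(K_v,C\dtp\Z/n)
\times
\tstprodlim_{v\in U\setminus V}\BBH^{i}(\CO_v,\CC\dtp\Z/n)
\to
\BBH^{i+1}_c(V,\CC\dtp\Z/n),
\]
a sequence of \emph{finite} groups. Taking the inverse limit over $V\subset U$ preserves exactness; the middle term becomes $\prod_{v\notin U}\BBH^i(K_v,C\dtp\Z/n)\times\prod_{v\in U}\BBH^i(\CO_v,\CC\dtp\Z/n)$ and the right term becomes $\BBH^{-i}(K,C'\dtp\Z/n)^D$ by \cref{duality: AV-finite level}. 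A direct limit over $U$ then gives the claim. The point is that by carrying the integral components along from the start, no separate ``match at $v\in V$'' step is ever needed.
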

\begin{proof}
For $V\subset U\subset X_0$, there exists an \ttes by \cref{lemma: localization sequence I}
\[
\BBH^{i}(U,\CC\dtp\Z/n)
\to
\tst\prod\limits_{v\notin U}\BBH^{i}(K_v,C\dtp\Z/n)
\times
\tst\prod\limits_{v\in U\setminus V}\BBH^{i}(\CO_v,\CC\dtp\Z/n)
\to
\BBH^{i+1}_c(V,\CC\dtp\Z/n).
\]
By \ArtVer duality \ref{duality: AV-finite level},
we obtain an isomorphism $\BBH^{i+1}_c(V,\CC\dtp\Z/n)\simeq \BBH^{-i}(V,\CC'\dtp\Z/n)^D$.
Taking inverse limit over $V$ then yields an \ttes
\[
\BBH^{i}(U,\CC\dtp\Z/n)
\to
\tst\prod\limits_{v\notin U}\BBH^{i}(K_v,C\dtp\Z/n)
\times
\tst\prod\limits_{v\in U}\BBH^{i}(\CO_v,\CC\dtp\Z/n)
\to
\BBH^{-i}(K,C'\dtp\Z/n)^D.
\]
Now we conclude the desired \ttes by taking direct limit over $U$.
\end{proof}

\inpart there is an \ttes
$\BBH^{2}(K,C'\dtp\Z/n)\to \BBP^{2}(K,C'\dtp\Z/n)\to \BBH^{-2}(K,C\dtp\Z/n)^D$
by applying \cref{sequence: finite PT sequence row 3 4 5} to $C'$.
It follows that there are exact sequences
\[\BBH^{-2}(K,C\dtp\Z/n)\to \BBP^{-2}(K,C\dtp\Z/n)\to \BBH^{2}(K,C'\dtp\Z/n)^D\]
by dualizing the above sequence of discrete abelian groups.
Recall that double dual of a finite abelian group is itself.
To close this section, we summarize all the above arithmetic dualities into a $15$-term \ttes as follows.

\begin{thm}\label{sequence: finite PT seq 15-term}
There is a $15$-term \ttes for $n\ge 1$
\beac
\xm@C=20pt @R=12pt{
0\ar[r]
& \BBH^{-2}(K,C\dtp\Z/n)\ar[r] & \BBP^{-2}(K,C\dtp\Z/n)\ar[r] & \BBH^{ 2}(K,C'\dtp\Z/n)^D
\ar@{->} `r/5pt[d] `/8pt[l] `^dl/8pt[lll] `^r/8pt[dl][dll]\\
& \BBH^{-1}(K,C\dtp\Z/n)\ar[r] & \BBP^{-1}(K,C\dtp\Z/n)\ar[r] & \BBH^{ 1}(K,C'\dtp\Z/n)^D
\ar@{->} `r/5pt[d] `/8pt[l] `^dl/8pt[lll] `^r/8pt[dl] [dll]\\
& \BBH^{ 0}(K,C\dtp\Z/n)\ar[r] & \BBP^{ 0}(K,C\dtp\Z/n)\ar[r] & \BBH^{ 0}(K,C'\dtp\Z/n)^D
\ar@{->} `r/5pt[d] `/8pt[l] `^dl/8pt[lll] `^r/8pt[dl] [dll]\\
& \BBH^{ 1}(K,C\dtp\Z/n)\ar[r] & \BBP^{ 1}(K,C\dtp\Z/n)\ar[r] & \BBH^{-1}(K,C'\dtp\Z/n)^D
\ar@{->} `r/5pt[d] `/8pt[l] `^dl/8pt[lll] `^r/8pt[dl] [dll]\\
& \BBH^{ 2}(K,C\dtp\Z/n)\ar[r] & \BBP^{ 2}(K,C\dtp\Z/n)\ar[r] & \BBH^{-2}(K,C'\dtp\Z/n)^D \ar[r] & 0
}
\eeac
\end{thm}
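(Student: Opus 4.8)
The plan is to splice the three-term exact sequences already established, using the perfect pairings between Tate--Shafarevich groups to build the connecting homomorphisms, and to treat the two extreme terms by a direct d\'evissage; the assembly parallels that of the classical Poitou--Tate sequence. To set up, the rows $\BBH^i(K,C\dtp\Z/n)\to\BBP^i(K,C\dtp\Z/n)\to\BBH^{-i}(K,C'\dtp\Z/n)^D$ are exact in the middle for $-1\le i\le 2$ by \cref{sequence: finite PT sequence row 3 4 5}, and the row $\BBH^{-2}(K,C\dtp\Z/n)\to\BBP^{-2}(K,C\dtp\Z/n)\to\BBH^{2}(K,C'\dtp\Z/n)^D$ is the dual sequence recorded just before the statement; outside degrees $-2\le\ast\le 2$ all the relevant groups vanish (\cref{remark: on degrees I}, \cref{remark: on degrees II}), so the sequence has exactly these $15$ terms. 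Moreover, any class in $\BBH^i(K,C\dtp\Z/n)$ comes from $\BBH^i(U,\CC\dtp\Z/n)$ for some non-empty open $U\subset X_0$, hence is unramified outside a finite set; therefore the localisation map $\BBH^i(K,C\dtp\Z/n)\to\prod_v\BBH^i(K_v,C\dtp\Z/n)$ factors through $\BBP^i(K,C\dtp\Z/n)$ and has kernel exactly $\Sha^i(C\dtp\Z/n)$.

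\textbf{The two ends.} The \distri ${_n}T_1[1]\to{_n}T_2[1]\to C\dtp\Z/n\to{_n}T_1[2]$ gives, in degree $-2$, an embedding $\BBH^{-2}(K,C\dtp\Z/n)\hookrightarrow H^0(K,{_n}T_1)=({_n}T_1)(K)$, and likewise over each $K_v$; since $({_n}T_1)(K)\hookrightarrow({_n}T_1)(K_v)$, the group $\BBH^{-2}(K,C\dtp\Z/n)$ injects into $\BBH^{-2}(K_v,C\dtp\Z/n)$ for any single $v$, hence into $\BBP^{-2}(K,C\dtp\Z/n)=\prod_v\BBH^{-2}(K_v,C\dtp\Z/n)$. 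Thus $\Sha^{-2}(C\dtp\Z/n)=0$, which is the injectivity at the leftmost term and settles the vanishing anticipated in \cref{remark: on degrees II}. Running the same argument for $C'$ and dualising, and using the isomorphism $\BBP^2(K,C\dtp\Z/n)\simeq\BBP^{-2}(K,C'\dtp\Z/n)^D$ obtained from \cref{duality: local-finite level} and \cref{duality: annihilators of local pairing in finite level}, gives surjectivity of $\BBP^2(K,C\dtp\Z/n)\to\BBH^{-2}(K,C'\dtp\Z/n)^D$, i.e.\ the final arrow ``$\to 0$''.

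\textbf{The connecting maps and the interior nodes.} For $i\in\{-2,-1,0,1\}$ I would define $\delta_i\colon\BBH^{-i}(K,C'\dtp\Z/n)^D\to\BBH^{i+1}(K,C\dtp\Z/n)$ as the composite
\[
\BBH^{-i}(K,C'\dtp\Z/n)^D\twoheadrightarrow\Sha^{-i}(C'\dtp\Z/n)^D\isoto\Sha^{i+1}(C\dtp\Z/n)\hookrightarrow\BBH^{i+1}(K,C\dtp\Z/n),
\]
where the surjection is $\Hom(-,\QZ)$ applied to the inclusion of $\Sha^{-i}(C'\dtp\Z/n)$ (as $\QZ$ is divisible, hence injective, $\Hom(-,\QZ)$ is exact), and the middle isomorphism is the perfect pairing $\Sha^{i+1}(C\dtp\Z/n)\times\Sha^{-i}(C'\dtp\Z/n)\to\QZ$ of \cref{duality: global-finite level}, available in the required range $-1\le i+1\le 2$ by \cref{remark: on degrees II}. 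Then $\Im(\delta_i)=\Sha^{i+1}(C\dtp\Z/n)=\ker\big(\BBH^{i+1}(K,C\dtp\Z/n)\to\BBP^{i+1}(K,C\dtp\Z/n)\big)$, which is exactness at each $\BBH^{i+1}(K,C\dtp\Z/n)$ node. At the $\BBH^{-i}(K,C'\dtp\Z/n)^D$ nodes, $\ker(\delta_i)$ is the kernel of $\BBH^{-i}(K,C'\dtp\Z/n)^D\twoheadrightarrow\Sha^{-i}(C'\dtp\Z/n)^D$; applying $\Hom(-,\QZ)$ to $0\to\Sha^{-i}(C'\dtp\Z/n)\to\BBH^{-i}(K,C'\dtp\Z/n)\to\BBP^{-i}(K,C'\dtp\Z/n)$ identifies it with the image of the transpose of the localisation map, and under $\BBP^{-i}(K,C'\dtp\Z/n)^D\simeq\BBP^i(K,C\dtp\Z/n)$ this transpose is the map $\BBP^i(K,C\dtp\Z/n)\to\BBH^{-i}(K,C'\dtp\Z/n)^D$ appearing in \cref{sequence: finite PT sequence row 3 4 5}. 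All the ``composite $=0$'' relations are then automatic, so the $15$-term sequence is exact.

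\textbf{Main obstacle.} The delicate point is the last identification: that the right-hand arrow of the three-term sequence of \cref{sequence: finite PT sequence row 3 4 5} is indeed the transpose of the localisation map under the duality $\BBP^i(K,C\dtp\Z/n)\simeq\BBP^{-i}(K,C'\dtp\Z/n)^D$. This amounts to the compatibility of the Artin--Verdier pairing (\cref{duality: AV-finite level}) with the local pairings (\cref{duality: local-finite level}) along the localisation sequences used in the proof of \cref{sequence: finite PT sequence row 3 4 5}, a compatibility of which \cref{duality: annihilators of local pairing in finite level} is the quantitative shadow; propagating it through the inverse and direct limits over the open subsets $V\subset U\subset X_0$ is the only genuinely careful bookkeeping needed. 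Everything else reduces to diagram chasing with the exactness statements already in hand.
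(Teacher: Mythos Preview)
Your strategy matches the paper's almost exactly: both treat the two ends by d\'evissage from ${_n}T_1(K)\hookrightarrow{_n}T_1(K_v)$, both define the connecting maps via the global duality $\Sha^{i+1}(C\dtp\Z/n)\simeq\Sha^{-i}(C'\dtp\Z/n)^D$, and both obtain exactness at the $\BBH^{-i}(K,C'\dtp\Z/n)^D$ nodes by dualising the localisation sequence $0\to\Sha^{-i}(C'\dtp\Z/n)\to\BBH^{-i}(K,C'\dtp\Z/n)\to\BBP^{-i}(K,C'\dtp\Z/n)$.

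There is, however, a genuine topological gap in your dualisation step that the paper closes and you do not. You invoke ``$\QZ$ is injective, so $\Hom(-,\QZ)$ is exact'' to dualise the localisation sequence, but this is the \emph{algebraic} dual; the isomorphism $\BBP^{-i}(K,C'\dtp\Z/n)^D\simeq\BBP^i(K,C\dtp\Z/n)$ you then use is the \emph{continuous} dual (this is how the restricted-product duality is set up). These do not agree: the algebraic dual of a non-discrete locally compact group is strictly larger. Concretely, you have shown $\ker\delta_i=I^D_{\mathrm{alg}}$ where $I=\Im\bigl(\BBH^{-i}(K,C'\dtp\Z/n)\to\BBP^{-i}(K,C'\dtp\Z/n)\bigr)$, but what you need is that every character of $I$ extends to a \emph{continuous} character of $\BBP^{-i}(K,C'\dtp\Z/n)$, i.e.\ that $(\BBP^{-i})^D_{\cts}\twoheadrightarrow I^D_{\mathrm{alg}}$. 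For this, Pontryagin duality requires $I$ to be closed in $\BBP^{-i}$, and since $\BBH^{-i}$ is discrete one wants $I$ to carry the discrete topology as a subspace. This is not automatic (think of $\Z\hookrightarrow\Zp$).

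The paper supplies exactly this missing ingredient: using \cref{lemma: localization sequence I}, any $\al$ in the image that lies in a basic compact open $\prod_{v\notin U}\BBH^{-i}(K_v,C'\dtp\Z/n)\times\prod_{v\in U}\BBH^{-i}(\CO_v,\CC'\dtp\Z/n)$ already comes from the \emph{finite} group $\BBH^{-i}(U,\CC'\dtp\Z/n)$, so the image meets each compact open in a finite set and is therefore discrete. Your ``main obstacle'' (the compatibility of the Artin--Verdier and local pairings) is a legitimate but comparatively formal point; the discreteness of the image is the step that actually makes the splicing work and is the one you should add.
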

\begin{proof}
The injectivity of the first arrow is a direct consequence of the injectivity of ${_n}T_1(K)\to {_n}T_1(K_v)$.
The surjectivity of the last arrow follows by dualizing the injective map $\BBH^{-2}(K,C'\dtp\Z/n)\to \BBP^{-2}(K,C'\dtp\Z/n)$.

Next, we show that the map $\BBH^i(K,C'\dtp\Z/n)\to \BBP^i(K,C'\dtp\Z/n)$ has discrete image for $-1\le i\le 2$.
Since $\BBP^2(K,C'\dtp\Z/n)$ itself is discrete, there is nothing to do.
For $i=0,\pm1$, suppose $\al\in\Im\big(\BBH^i(K,C'\dtp\Z/n)\to \BBP^i(K,C'\dtp\Z/n)\big)$ lies in
$\prod_{v\notin U}\BBH^i(K_v,C'\dtp\Z/n)\times\prod_{v\in U}\BBH^i(\CO_v,\CC'\dtp\Z/n)$
for some $U\subset X_0$.
Then $\al$ comes from the finite group $\BBH^i(U,\CC'\dtp\Z/n)$ by \cref{lemma: localization sequence I}.
Now dualizing the exact sequences
$0\to \Sha^i(C'\dtp\Z/n)\to \BBH^i(K,C'\dtp\Z/n)\to \BBP^i(K,C'\dtp\Z/n)$
yields the exactness at all the remaining terms.
\end{proof}

\section{Results for the complex \texorpdfstring{$C$}{C}}
\subsection{Global duality: preliminaries}
In this subsection, we establish an \ArtVer style duality and some local duality results.
We begin with a list of properties of abelian groups under consideration.
Recall that $C=[T_1\stra{\rho}T_2]$.

\begin{lem}\label{lemma: list of groups II}
Let $P$ be a $K$-torus that extends to a $U_0$-tori $\CP$ for some sufficiently small non-empty open subset $U_0$ of $X$.
Let $U$ be a non-empty open subset of $U_0$.
Let $L$ be either $K$ or $K_v$.
\benuma
\item\label{list I 1}
The torsion groups $\BBH^1(U,\CC)_{\tors}$ and $\BBH^1_c(U,\CC)_{\tors}$ are of cofinite type.

\item\label{list I 2}
For $i\ge 2$, the groups $\BBH^i(U,\CC)$ and $\BBH^i_c(U,\CC)$ are torsion of cofinite type.

\item\label{list I 3}
The group $H^1(K,P)$ has finite exponent and
the group $H^1(K_v,P)$ is finite.
Moreover, $H^i(L,P)=0$ for $i\ge 3$.
Finally, the groups $\Sha^i(P)$ are finite for each $i\ge 0$.

\item\label{list I 4}
Let $\Phi$ be a group of multiplicative type over $K$.
Then the groups $H^1(L,\Phi)$ and $H^3(L,\Phi)$ have finite exponents.

\item\label{list I 5}
Suppose $M\ce \ker\rho$ is finite.
Then the groups $\BBH\ui(K_v,C)$ have a common finite exponent for all $v\in X\uun$.
Moreover, the groups $\BBH\ui(K,C)$ and $\BBH^1(K,C)$ are torsion of finite exponent.

\item\label{list I 6}
Suppose $T\ce \coker\rho$ is trivial.
The groups $\BBH^0(K,C)$ and $\BBH^2(K,C)$ are torsion of finite exponent.
\eenum
\end{lem}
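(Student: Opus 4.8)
The plan is to reduce all six assertions to standard facts about a single torus or a single group of multiplicative type, pushed through the two distinguished triangles $\CT_1\to\CT_2\to\CC\to\CT_1[1]$ over $U$ and $M[1]\to C\to T\to M[2]$ over $K$ or $K_v$ recorded in the ``Triangles'' paragraph. Parts (1) and (2) will follow from the first triangle once the analogous facts for a single $U$-torus are available; parts (5) and (6) will follow from the second triangle once one has (3) and (4); and (3), (4) are d\'evissage together with a restriction--corestriction argument.

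For (3): the vanishing $H^i(L,P)=0$ for $i\ge3$ is \cref{remark: on degrees I}(1). For the finite exponent of $H^1(K,P)$, choose a finite Galois extension $L/K$ splitting $P$, so that $H^1(L,P_L)\simeq H^1(L,\gm)^{\dim P}=0$ by Hilbert~$90$ and $\cores\circ\res$ kills $H^1(K,P)$ by $[L:K]$. For the finiteness of $H^1(K_v,P)$, the same restriction--corestriction argument shows that this group has finite exponent $m$, and the $m$-th power Kummer sequence $1\to{}_mP\to P\to P\to1$ exhibits it as a quotient of $H^1(K_v,{}_mP)$, which is finite because Galois cohomology of finite modules over the two-dimensional local field $K_v$ is finite; concretely $H^1(K_v,\mu_n)=K_v\uu/(K_v\uu)^n$ is finite since the principal units $1+\fm_v$ form a $\Q$-vector space via the logarithm, and in any case this is recorded in \cite{HSz16}. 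Finally $\Sha^0(P)=0$ because $P(K)\hra\prod_vP(K_v)$, $\Sha^i(P)=0$ for $i\ge3$ by the above vanishing, and $\Sha^1(P)$, $\Sha^2(P)$ are finite by the arithmetic duality for tori of \cite{HSSz15} and \cite{HSz16}. For (4), dualizing $0\to\wh{\Phi}_{\tors}\to\wh{\Phi}\to\wh{\Phi}/\wh{\Phi}_{\tors}\to0$ yields a short exact sequence $1\to P\to\Phi\to F\to1$ with $P$ a torus and $F$ finite; its long exact sequence, (3), and the fact that $H^j(L,F)$ is annihilated by any $n$ with $nF=0$, give finite exponent for $H^1(L,\Phi)$ (squeezed between $H^1(L,P)$ and $H^1(L,F)$) and $H^3(L,\Phi)\simeq H^3(L,F)$ (using $H^3(L,P)=H^4(L,P)=0$).

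For (5): since $M=\ker\rho$ is finite, the triangle $M[1]\to C\to T\to M[2]$ collapses in degree $-1$ to an isomorphism $\BBH\ui(L,C)\simeq H^0(L,M)=M(L)$, a subgroup of the finite group $M(\ol{K})$; hence $\BBH\ui(K_v,C)$ is killed by $|M|$ for every $v\in X\uun$ --- one common bound --- and $\BBH\ui(K,C)$ is finite. In degree $1$ the same triangle gives an exact sequence $H^2(K,M)\to\BBH^1(K,C)\to H^1(K,T)$ whose outer terms have finite exponent ($H^2(K,M)$ is $|M|$-torsion, $H^1(K,T)$ by (3)), so $\BBH^1(K,C)$ is torsion of finite exponent. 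For (6): if $T=\coker\rho$ is trivial then $C$ is quasi-isomorphic to $M[1]$, whence $\BBH^0(K,C)=H^1(K,M)$ and $\BBH^2(K,C)=H^3(K,M)$, both of finite exponent by (4) applied to the group of multiplicative type $M$.

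For (1) and (2): taking $\BBH^\bullet(U,-)$ and $\BBH^\bullet_c(U,-)$ of $\CT_1\to\CT_2\to\CC\to\CT_1[1]$ reduces both to the corresponding facts for a single $U$-torus $\CT$, namely that $H^i(U,\CT)$ and $H^i_c(U,\CT)$ are torsion of cofinite type for $i\ge2$ and that the torsion subgroups of $H^1(U,\CT)$, $H^1_c(U,\CT)$ are of cofinite type. The cofinite-type part is formal: applying $\BBH^\bullet(U,-)$ (resp.\ $\BBH^\bullet_c(U,-)$) to the triangle $\CC\to\CC\to\CC\dtp\Z/n\to\CC[1]$ (multiplication by $n$) produces $0\to\BBH^{i-1}(U,\CC)/n\to\BBH^{i-1}(U,\CC\dtp\Z/n)\to{}_n\BBH^i(U,\CC)\to0$ with finite middle term --- hypercohomology of $U$ with finite coefficients being finite, e.g.\ by the Leray spectral sequence for $U\to\Spec k$ and finiteness of Galois cohomology of finite modules over the $p$-adic field $k$ --- so ${}_n\BBH^i(U,\CC)$ is finite for every $n$, and likewise with compact support. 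The torsion-ness in degrees $\ge2$ is where the geometry of the curve $U$ genuinely enters: via quasi-trivial resolutions and Shapiro's lemma it comes down to $\br(U)$ and the higher groups $H^i(U,\gm)$ being torsion, which I would cite from \cite{HSSz15} and \cite{cat19WA}. That single-torus input, together with the two-dimensional local field fact used in (3), is the only ingredient beyond formal d\'evissage and restriction--corestriction, so I expect the torus case underlying (1) and (2) to be the main obstacle.
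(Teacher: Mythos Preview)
Your proof is correct and follows the same architecture as the paper: d\'evissage via the two distinguished triangles, with (3) and (4) as the base cases feeding into (5) and (6), and (1), (2) handled separately. The paper's own proof is terser --- it simply cites \cite{cat19WA}*{Lemma 1.1} for (1), (2), and most of (3), whereas you spell out the restriction--corestriction and Kummer arguments and sketch why $\BBH^{i-1}(U,\CC\dtp\Z/n)$ is finite --- but the underlying strategy is identical.
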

\pf
For \ref{list I 1}and \ref{list I 2}, see \cite{cat19WA}*{Lemma 1.1}.

\benuma
\item[{\ref{list I 3}}]
The group $H^3(K,P)$ is the direct limit of the groups $H^3(V,\CP)$ for $V\subset U_0$,
but by \cite{SvH03}*{Corollary 4.10} $H^3(V,\CP)=0$ for $V$ sufficiently small and so $H^3(K,P)=0$.
The vanishing of $H^3(K_v,P)$ is explained in \cite{HSz16}*{Remark 2.3} (see also \cite{SerreCG}*{II.5.3}).
For cohomological dimension reasons,
we see that $H^i(L,P)\simeq \drl_n H^i(L,{_n}P)=0$ for $i\ge 4$
where the first isomorphism follows from the Kummer sequence
$0\to H^3(L,P)/n\to H^4(L,{_n}P)\to {_n}H^4(L,P)\to 0$.
The remaining stuffs are proved in \cite{cat19WA}*{Lemma 1.1}.

\item[{\ref{list I 4}}]
Embed $\Phi$ into a \ses $0\to P\to \Phi\to F\to 0$ where $P$ is an $L$-torus and $F$ is a finite \'etale commutative group scheme.
Thus there is an \ttes $H^i(L,P)\to H^i(L,\Phi)\to H^i(L,F)$ for $i\ge 1$.
By d\'evissage, it follows that
$H^1(L,\Phi)$ has finite exponent by Hilbert's Theorem $90$
and
so does $H^3(L,\Phi)$ by $H^3(L,P)=0$.

\item[{\ref{list I 5}}]
Note that there is an isomorphism $\BBH\ui(K_v,C)\simeq \BBH^0(K_v,M)$ thanks to the \distri $M[1]\to C\to T\to M[2]$.
Since $M$ is finite by assumption, $\BBH\ui(K_v,C)$ has a common finite exponent for each $v\in X\uun$.
The group $\BBH\ui(K,C)$ has \finiexp for the same reason.
Thanks to the exact sequence $H^2(K,M)\to \BBH^1(K,C)\to H^1(K,T)$, we deduce that $\BBH^1(K,C)$ have finite exponent by d\'evissage.

\item[{\ref{list I 6}}]
In this case, the short complex $C$ is quasi-isomorphic to $M[1]$.
Thus the desired results follow from {\ref{list I 4}}.
\qed
\eenum

\begin{rmk}\label{remark: the symmetry of finite kernel and surjectivity}
Note that the finiteness of $\ker\rho$ is equivalent to the finiteness of $\cok(\wh{T}_2\to \wh{T}_1)$,
and hence it is equivalent to the injectivity of $\wc{T}_1\to \wc{T}_2$.
Therefore the finiteness of $\ker\rho$ amounts to saying that $\rho':T_2'\to T_1'$ is surjective, and vice versa.
By \cref{lemma: list of groups II}(5,6), we see that
\bitem
\item
If $\ker\rho$ is finite, then $\BBH\ui(K,C)$, $\BBH^0(K,C')$, $\BBH^1(K,C)$ and $\BBH^2(K,C')$ are torsion of finite exponent.

\item
If $\cok\rho$ is trivial, then $\BBH\ui(K,C')$, $\BBH^0(K,C)$, $\BBH^1(K,C')$ and $\BBH^2(K,C)$ are torsion of finite exponent.
\eitem
\end{rmk}

\subsection{Global duality}
The goal of this section is to establish global duality results.
We begin with the finiteness of $\Sha^0(C)$ and $\Sha^2(C)$ (and that of $\Sha^0(C')$ and $\Sha^2(C')$ by symmetry).

\begin{lem}
The groups $\Sha^0(C)$ and $\Sha^2(C)$ are finite.
\end{lem}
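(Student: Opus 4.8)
The plan is to establish both finiteness statements by d\'evissage along the \distri $M[1]\to C\to T\to M[2]$ (recall $M=\ker\rho$, $T=\cok\rho$), reducing everything to finiteness facts for the torus $T$ and for the group of multiplicative type $M$ that are either available (\cref{lemma: list of groups II}) or classical. Throughout I write $0\to M_1\to M\to M_0\to 0$ for the canonical exact sequence with $M_1$ a torus (character module $\wh M/\wh M_{\tors}$) and $M_0$ finite (character module $\wh M_{\tors}$).

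For $\Sha^2(C)$, the long exact sequence of the triangle contains
\[\cdots\to H^1(K,T)\to H^3(K,M)\to\BBH^2(K,C)\to H^2(K,T)\to H^4(K,M)\to\cdots .\]
Since $H^i(K,M_1)=0$ for $i\ge 3$ by \cref{lemma: list of groups II}, one has $H^3(K,M)\simeq H^3(K,M_0)$, which is finite by arithmetic duality over $K$, and $H^4(K,M)\simeq H^4(K,M_0)=0$ as $\cd K\le 3$; the same holds over every $K_v$. Hence there is a short exact sequence $0\to E\to\BBH^2(K,C)\to H^2(K,T)\to 0$ with $E$ finite, compatible with the analogous local sequences $0\to E_v\to\BBH^2(K_v,C)\to H^2(K_v,T)\to 0$ with $E_v$ finite. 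Applying the snake lemma to the morphism of short exact sequences from the global one to the product of the local ones yields an exact sequence
\[\ker\Big(E\longrightarrow\prod_{v}E_v\Big)\longrightarrow\Sha^2(C)\longrightarrow\Sha^2(T),\]
whose outer terms are finite (the first is a subgroup of the finite group $E$, the second is finite by \cref{lemma: list of groups II}); therefore $\Sha^2(C)$ is finite.

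For $\Sha^0(C)$, I would first identify it with a Tate--Shafarevich group of $M$. As $H^{-1}(K,T)=0$, the same long exact sequence gives a short exact sequence
\[0\to H^1(K,M)\to\BBH^0(K,C)\to\ker\big(T(K)\to H^2(K,M)\big)\to 0,\]
and likewise over each $K_v$. Since $T(K)\hra T(K_v)$, every element of $\Sha^0(C)$ dies in $T(K)$ and so lifts uniquely to $H^1(K,M)$; since $H^1(K_v,M)\hra\BBH^0(K_v,C)$, this lift is already trivial in every $H^1(K_v,M)$. This yields a canonical isomorphism $\Sha^0(C)\simeq\Sha^1(M)$. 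It then remains to see that $\Sha^1(M)$ is finite for $M$ of multiplicative type, for which the sequence $0\to M_1\to M\to M_0\to 0$ reduces matters --- via a diagram chase --- to the finiteness of $\Sha^1(M_1),\Sha^2(M_1)$ (tori, \cref{lemma: list of groups II}) and of $\Sha^1(M_0)$ (a finite module over $K$; this is part of the arithmetic duality for finite Galois modules over $p$-adic function fields, cf.\ \cite{HSSz15}). Applying all of this to $C'$, $\ker\rho'$ and $\cok\rho'$ gives the finiteness of $\Sha^0(C')$ and $\Sha^2(C')$.

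The step I expect to be the main obstacle is the finiteness of $\Sha^1(M)$ for $M$ of multiplicative type, i.e.\ the diagram chase above: the six-term sequence of Tate--Shafarevich groups attached to $0\to M_1\to M\to M_0\to 0$ need not be exact in the middle, because lifting a class of $M$ that is everywhere locally trivial to one of $M_1$ that is everywhere locally trivial meets a gluing obstruction. One must check that this obstruction lies in a finite group; the natural way is to note that every class involved comes from a finite level over some open $U\subset X$ and that the pertinent subgroups of $\BBH^\bullet(U,-)$ stabilize, in the spirit of the proofs of \cref{duality: AV-finite level} and \cref{duality: global-finite level}, so that $\Sha^1(M)$ --- which has finite exponent by \cref{lemma: list of groups II} --- is also of cofinite type, hence finite. (Alternatively, one simply invokes the known finiteness of $\Sha^1$ of groups of multiplicative type over $K$.) Granting this, the two d\'evissages and the two snake-lemma arguments are routine.
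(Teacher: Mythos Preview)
Your argument for $\Sha^2(C)$ contains a genuine error. You assert that $H^3(K,M_0)$ is finite for a finite Galois module $M_0$, citing ``arithmetic duality over $K$''. This is false: take $M_0=\mu_n$; the Kummer sequence and $H^3(K,\gm)=0$ give $H^3(K,\mu_n)\simeq \br(K)/n$, and for a $p$-adic function field $\br(K)/n$ is infinite (the residue maps produce infinitely many nonzero classes). Hence your group $E=\Im\big(H^3(K,M)\to\BBH^2(K,C)\big)$ is only known to have finite exponent, not to be finite, and the snake-lemma bound $\ker(E\to\prod_v E_v)\to\Sha^2(C)\to\Sha^2(T)$ no longer forces $\Sha^2(C)$ finite. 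The paper avoids this by a different two-step argument: first, finite exponent of $\Sha^2(C)$ follows by the same d\'evissage (using that $H^3(L,M)$ has finite exponent and $\Sha^2(T)$ is finite); second, $\Sha^2(C)$ is of cofinite type because every class comes, via \cref{sequence: long seq cpt supp and 3 arrows lemma}(3) and the Three Arrows Lemma, from the fixed group $\BBH^2_c(X_0,\CC)$, which is torsion of cofinite type (\cref{lemma: list of groups II}). Finite exponent plus cofinite type gives finiteness.

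For $\Sha^0(C)$, your identification $\Sha^0(C)\simeq\Sha^1(M)$ is correct and is exactly what the paper does. Your reduction of the finiteness of $\Sha^1(M)$ to that of $\Sha^1(M_1)$, $\Sha^2(M_1)$, $\Sha^1(M_0)$ via $0\to M_1\to M\to M_0\to 0$ is workable but, as you yourself flag, the diagram chase is not straightforward and you do not actually carry it out. The paper's argument is cleaner and parallel to the $\Sha^2$ case: $H^1(K,M)$ has finite exponent $N$ (\cref{lemma: list of groups II}(4)), and every element of $\Sha^1(M)$ lifts to $H^1_c(X_0,\CM)$ by the same compact-support trick; the resulting map factors through $H^1_c(X_0,\CM)/N$, which injects into the finite group $H^1_c(X_0,\CM\dtp\Z/N)$ via the Kummer sequence. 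This gives finiteness without any d\'evissage on $M$.
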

\pf
Let $L$ be $K$ or $K_v$ for $v\in X^{(1)}$.
We consider the \distri
\be\label{triangle: kernel-complex-cokernel}
M[1]\to C\to T\to M[2]
\ee over $L$.
By \cref{lemma: list of groups II}(4), the groups $H^1(L,M)$ and $H^3(L,M)$ have finite exponents.
\bitem
\item
The \distri $(\ref{triangle: kernel-complex-cokernel})$ yields \ttess $H^3(L,M)\to \BBH^2(L,C)\to H^2(L,T)\to H^4(L,M)$.
Note 
 that $\Sha^2(T)$ is finite by \cref{lemma: list of groups II}(3).
\inpart $\Sha^2(C)$ has \finiexp by d\'evissage
and it remains to show that $\Sha^2(C)$ is of cofinite type.
Since $\BBH^2(K,C)$ is the direct limit of $\BBH^2(U,\CC)$, each $\al\in\Sha^2(C)$ comes from some $\BBH^2(U,\CC)$ with $U$ a \neos of $X_0$.
\inpart $\al$ lies in the image of $\BBH^2_c(U,\CC)$ by \cref{sequence: long seq cpt supp and 3 arrows lemma}(3).
We conclude that $\al$ comes from $\BBH^2_c(X_0,\CC)$ by the Three Arrows Lemma and hence $\Sha^2(C)$ is a subquotient of $\BBH^2_c(X_0,\CC)$
(which is torsion of cofinite type by \cref{lemma: list of groups II}(2)).
As a consequence, $\Sha^2(C)$ is of cofinite type.
Therefore $\Sha^2(C)$ is finite.

\item
The \ttes $0\to H^1(K,M)\to \BBH^0(K,C)\to H^0(K,T)$ obtained from $(\ref{triangle: kernel-complex-cokernel})$
yields an isomorphism $\Sha^1(M)\simeq\Sha^0(C)$ as $\Sha^0(T)=0$.
An analogous argument as above implies that $\Sha^1(M)\subset\Im (H^1_c(X_0,\CM)\to H^1(K,M))$.
But this map factor through $H^1_c(X_0,\CM)/N\to H^1(K,M)$ because $H^1(K,M)$ has finite exponent for some positive integer $N$.
Finally, $H^1_c(X_0,\CM)/N$ injects into the finite group $H^1_c(X_0,\CM\dtp\Z/N)$ thanks to the Kummer sequence
(more precisely, the Kummer sequence for $[\CT_1\to \CT]\simeq \CM[1]$),
so it is finite as well.
Hence $\Sha^1(M)\simeq \Sha^0(C)$ is contained in this finite image which completes the proof.
\qed
\eitem

\begin{rmk}\label{remark: on degrees III Sha C}
In fact, all non-trivial \TateSha groups of the complex $C$ are finite:
\bitem
\item
$\Sha^1(C)$ is a finite group.
This fact is more complicated because the group $\BBH^1_c(U,\CC)$ needs not to be torsion.
See \cite{cat19WA}*{Proposition 1.12} for a proof.

\item
$\Sha^i(C)=0$ for $i\le -1$ and $i\ge 3$ by similar arguments as \cref{remark: on degrees II}
thanks to the \distri $T_1\to T_2\to C\to T_1[1]$.
\eitem
\end{rmk}

Now we prove a first global duality result between the finite groups $\Sha^0(C)$ and $\Sha^2(C')$.
Let $\BBD^i_K(U,\CC)\ce\Im\big(\BBH^i_c(U,\CC)\to \BBH^i(K,C)\big)$.
The first input is the following \ttes constructed as \cite{cat19WA}*{Proposition 1.15}
\be\label{sequence: key ex-seq in deg 1}
\tstopslim_{v\in X^{(1)}}\BBH^1(K_v,C)\to \BBH^{2}_c(U,\CC)\to \BBD^{2}_K(U,\CC)\to 0.
\ee
In our situation, the exactness is guaranteed by \cref{injectivity: cohohomology of O into K for -1 0 1 2} below instead of \cite{cat19WA}*{Proposition 1.13} there.

\begin{lem}\label{injectivity: cohohomology of O into K for -1 0 1 2}
The homomorphism $\BBH^i(\CO_v,\CC)\to \BBH^i(K_v,\CC)$ induced by the canonical morphism $\e K_v\to \e \CO_v$ is injective for $-1\le i\le 2$.
\end{lem}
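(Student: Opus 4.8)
The plan is to pass from the hypercohomology of $\CC$ over $\CO_v$ (resp.\ of $C$ over $K_v$) to the Galois cohomology of the unramified Galois group, and there to exhibit an honest splitting coming from a uniformiser. Write $K_v^{\nr}$ for the maximal unramified extension of $K_v$, $\CO_v^{\nr}$ for its ring of integers, and $G_v\ce\gal(K_v^{\nr}|K_v)=\gal(\ol{\kappa(v)}|\kappa(v))$. Since each $T_j$ extends to the $\CO_v$-torus $\CT_j$ it is unramified, hence split over $K_v^{\nr}$, and its cocharacter module $\wc{T}_j=X_*(T_j)$ is a $G_v$-module.

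\textbf{Step 1 (reduce to $G_v$-cohomology).} I would first record two vanishing statements. The ring $\CO_v^{\nr}$ is strictly Henselian, so $\CT_j$ is split over it and $H^q(\CO_v^{\nr},\CT_j)=0$ for $q\ge 1$: it suffices to treat $\G_m$, where $\Pic(\CO_v^{\nr})=0$, the Brauer group injects into $\br(K_v^{\nr})=0$, and in degrees $\ge 3$ one bootstraps through the Kummer sequences and $H^q(\CO_v^{\nr},\mu_n)=H^q(\ol{\kappa(v)},\mu_n)=0$. On the other hand $K_v$ is equicharacteristic zero, so $\cd(K_v^{\nr})\le 1$ and $H^q(K_v^{\nr},T_j)=0$ for $q\ge 1$ as well (Serre's vanishing of $H^1$ for tori over fields of cohomological dimension $\le 1$, plus the same Kummer bootstrap in higher degrees). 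Feeding these into the \HSerre spectral sequences for the pro-\'etale Galois covers $\CO_v^{\nr}/\CO_v$ and $K_v^{\nr}/K_v$ collapses them to identifications
\[
\BBH^i(\CO_v,\CC)\simeq\BBH^i\bigl(G_v,[\CT_1(\CO_v^{\nr})\to\CT_2(\CO_v^{\nr})]\bigr),\qquad
\BBH^i(K_v,C)\simeq\BBH^i\bigl(G_v,[T_1(K_v^{\nr})\to T_2(K_v^{\nr})]\bigr),
\]
compatible with the map induced by $\CO_v\subset K_v$, which under them is the one induced by the inclusion of complexes of $G_v$-modules $[\CT_1(\CO_v^{\nr})\to\CT_2(\CO_v^{\nr})]\hookrightarrow[T_1(K_v^{\nr})\to T_2(K_v^{\nr})]$.

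\textbf{Step 2 (split the cokernel).} For each $j$ the valuation gives a \ses of $G_v$-modules $0\to\CT_j(\CO_v^{\nr})\to T_j(K_v^{\nr})\to\wc{T}_j\to 0$ (exact because $T_j$ is split over $K_v^{\nr}$), the last arrow sending $t$ to the functional $\chi\mapsto v(\chi(t))$ on $X^*(T_j)$. Fix a uniformiser $\pi$ of $K_v$; it is fixed by $G_v$ because $K_v^{\nr}|K_v$ is unramified, so $\mu\mapsto\mu(\pi)$ is a $G_v$-equivariant section of this surjection, and the sections for $j=1,2$ intertwine $\rho$ with the induced map on cocharacters since $(X_*(\rho)\mu)(\pi)=(\rho\circ\mu)(\pi)=\rho(\mu(\pi))$. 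Hence they assemble into a $G_v$-equivariant chain-level splitting of the \ses of complexes (all in degrees $-1$ and $0$)
\[
0\to[\CT_1(\CO_v^{\nr})\to\CT_2(\CO_v^{\nr})]\to[T_1(K_v^{\nr})\to T_2(K_v^{\nr})]\to[\wc{T}_1\to\wc{T}_2]\to 0.
\]
Deriving $G_v$-invariants takes a degreewise-split short exact sequence of complexes to a direct-sum decomposition, giving $\BBH^i(K_v,C)\simeq\BBH^i(\CO_v,\CC)\oplus\BBH^i(\kappa(v),[\wc{T}_1\to\wc{T}_2])$ for every $i$; in particular $\BBH^i(\CO_v,\CC)\to\BBH^i(K_v,C)$ is (split) injective, a fortiori for $-1\le i\le 2$.

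\textbf{Main obstacle.} The step that resists a purely formal treatment is precisely the splitting. The naive d\'evissage along the triangle $T_1\to T_2\to C\to T_1[1]$ does not work: chasing the resulting ladder would require $H^i(\CO_v,\CT_j)\to H^i(K_v,T_j)$ to be surjective, which already fails in degree $0$ because the valuation obstructs $\CT_j(\CO_v)\twoheadrightarrow T_j(K_v)$. One genuinely needs the rigidity supplied by the $G_v$-fixed uniformiser $\pi$. (Equivalently one could run the argument through the localisation triangle $R\Gamma_{\{v\}}(\CO_v,\CC)\to R\Gamma(\CO_v,\CC)\to R\Gamma(K_v,C)\to{}$, using absolute purity to identify $R\Gamma_{\{v\}}(\CO_v,\CC)$ with $\BBH^{\bullet-1}(\kappa(v),[\wc{T}_1\to\wc{T}_2])$; injectivity then becomes surjectivity of the residue map, witnessed again by $\mu\mapsto\mu(\pi)$.) Note that the argument in fact yields injectivity in all degrees; the range $-1\le i\le 2$ is only what is needed for the \ttes $(\ref{sequence: key ex-seq in deg 1})$.
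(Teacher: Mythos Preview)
Your proof is correct and takes a genuinely different route from the paper's. The paper handles each degree separately by ad hoc d\'evissage: for $i=-1$ it uses the triangle $T_1\to T_2\to C\to T_1[1]$ and injectivity of points; for $i=0$ it uses the triangle $M[1]\to C\to T\to M[2]$ and reduces to injectivity of $H^1(\CO_v,\CM)\to H^1(K_v,M)$ for a group of multiplicative type $\CM$, which it checks by writing $\CM$ as an extension of a finite group scheme by a torus; for $i=1$ it cites an external reference \cite{cat19WA}*{Proposition 1.13}; and for $i=2$ it passes to the limit over $n$ of the finite-level Kummer sequences and invokes \cref{lemma: inj of cohomology from Ov to Kv for cpx of tori in finite level}. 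Your argument is more uniform and more structural: after reducing both sides to $G_v$-hypercohomology of explicit two-term complexes via Hochschild--Serre (using that the small \'etale topos of the strictly Henselian ring $\CO_v^{\nr}$ is a point, and that $\cd(K_v^{\nr})\le 1$ kills higher cohomology of split tori), you produce an honest $G_v$-equivariant chain-level splitting of the valuation sequence via $\mu\mapsto\mu(\pi)$, yielding a direct-sum decomposition $\BBH^i(K_v,C)\simeq\BBH^i(\CO_v,\CC)\oplus\BBH^i(\kappa(v),[\wc{T}_1\to\wc{T}_2])$ in all degrees at once. This buys you more than the paper proves (split injectivity, with the cokernel identified), and your observation that naive d\'evissage along $T_1\to T_2\to C$ fails because $\CT_j(\CO_v)\to T_j(K_v)$ is not surjective is exactly why the paper is forced into its case analysis. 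It is worth noting that your splitting is essentially the torus-level incarnation of the section of $1\to I_v\to\gal(\olK_v|K_v)\to G_v\to 1$ that the paper itself uses in the finite-level \cref{lemma: inj of cohomology from Ov to Kv for cpx of tori in finite level}; you have simply pushed that idea through to the integral coefficients directly.
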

\proof\hfill
\benuma
\item $i=-1$.
Since $\CT_1$ is affine (hence separated), $\CT_1(\CO_v)\to T_1(K_v)$ is injective.
It follows that the homomorphism $\BBH\ui(\CO_v,\CC)\to \BBH\ui(K_v,C)$ is injective by d\'evissage thanks to the \distri $T_1\to T_2\to C\to T_1[1]$.
\item $i=0$.
We consider the \distri $M[1]\to C\to T\to M[2]$.
By d\'evissage, it will be sufficient to show $H^1(\CO_v,\CM)\to H^1(K_v,M)$ is injective.
We may realize $\CM$ as an extension $1\to \CP\to \CM\to \CF\to 1$ of a finite group scheme $\CF$ by a torus $\CP$ over $\CO_v$.
Recall that $H^1(\CO_v,\CP)\to H^1(K_v,P)$ and $H^1(\CO_v,\CF)\to H^1(K_v,F)$ are injective,
and that $H^0(\CO_v,\CF)=H^0(K_v,F)$ since $\CF$ is a finite group scheme.
It follows that $H^1(\CO_v,\CM)\to H^1(K_v,M)$ is injective by d\'evissage.
\item $i=1$.
The validity of this case is ensured by \cite{cat19WA}*{Proposition 1.13}.
\item $i=2$.
There is a \cmdm obtained from the respective \Kumseqs
\[
\xm@R15pt{
\drl\limits_n \BBH^1(\CO_v,\CC\dtp\Z/n)\ar[r]\ar[d] & \BBH^2(\CO_v,\CC)\ar[d]\\
\drl\limits_n \BBH^1(K_v,C\dtp\Z/n)\ar[r] & H^2(K_v,C).
}
\]
Since the groups $\BBH^1(K_v,C)$ and $\BBH^1(\kappa(v),\CC)$ are torsion,
we observe horizontal arrows are isomorphisms because of $\BBH^1(K_v,C)\ots\QZ=0$ and
$\BBH^1(\CO_v,\CC)\ots\QZ\simeq\BBH^1(\kappa(v),\CC)\ots\QZ=0$.
Since the left vertical arrow is injective, so is the right one by diagram chasing.
\qed
\eenum

Now we arrive at:

\begin{thm}\label{duality: global 0 2 easy one}
There is a \ppair of finite groups:
\[
\Sha^0(C')\times\Sha^2(C)\to\QZ.
\]
\end{thm}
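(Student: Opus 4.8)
The plan is to adapt the proof of the finite--coefficient duality \cref{duality: global-finite level} to the torus complex, using the key exact sequence (\ref{sequence: key ex-seq in deg 1}) (whose exactness is guaranteed by \cref{injectivity: cohohomology of O into K for -1 0 1 2}) in place of \cref{proposition: key ex-seq in deg -1 to 1 with finite coeff}, and an Artin--Verdier style identification for $\CC$ in place of \cref{duality: AV-finite level}. By the symmetry $C\leftrightarrow C'$ it suffices to produce an isomorphism $\Sha^0(C')\simeq\Sha^2(C)^D$ induced by the canonical pairing; since both groups are finite (just established), this yields a perfect pairing.

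First I would fix a sufficiently small non-empty open $U\subseteq X_0$ and put $\rmD^0_{\sh}(U,\CC')\ce\ker\!\big(\BBH^0(U,\CC')\to\prod_{v\in X^{(1)}}\BBH^0(K_v,C')\big)$. Applying $(-)^D$ to (\ref{sequence: key ex-seq in deg 1}) and using the local dualities $\BBH^1(K_v,C)^D\simeq\BBH^0(K_v,C')$ (cf.\ \cite{HSz16}*{Lemma 1.1}), one obtains a commutative diagram with exact rows
\[\xm@C=16pt{
0\ar[r] & \rmD^0_{\sh}(U,\CC')\ar[r]\ar@{-->}[d] & \BBH^0(U,\CC')\ar[r]\ar[d] & \prod\limits_{v\in X^{(1)}}\BBH^0(K_v,C')\ar[d]\\
0\ar[r] & \BBD^2_K(U,\CC)^D\ar[r] & \BBH^2_c(U,\CC)^D\ar[r] & \big(\bigoplus\limits_{v\in X^{(1)}}\BBH^1(K_v,C)\big)^D
}\]
whose middle vertical arrow comes from the Artin--Verdier pairing $\BBH^0(U,\CC')\times\BBH^2_c(U,\CC)\to\QZ$ and whose right vertical arrow is the topological dual of the sum of local dualities. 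If the middle arrow is an isomorphism and the square commutes, then it carries kernels to kernels, identifying the dashed arrow as an isomorphism $\rmD^0_{\sh}(U,\CC')\simeq\BBD^2_K(U,\CC)^D$.

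It then remains to pass to the limit over shrinking $U$. Since filtered colimits are exact and $\prod_v\BBH^0(K_v,C')$ does not depend on $U$, one has $\drl_U\rmD^0_{\sh}(U,\CC')=\Sha^0(C')$. On the other hand, $\{\BBD^2_K(U,\CC)\}_U$ is a decreasing family of subgroups of $\BBH^2(K,C)$ (the inclusions being provided by the Three Arrows Lemma, \cref{sequence: long seq cpt supp and 3 arrows lemma}(4)) whose intersection is $\Sha^2(C)$: one has $\Sha^2(C)\subseteq\BBD^2_K(U,\CC)$ for all $U$ because a globally trivial class lifts to $\BBH^2_c$ over a slightly smaller open subset by \cref{sequence: long seq cpt supp and 3 arrows lemma}(3), while the reverse inclusion of the intersection follows by deleting one closed point at a time and invoking \cref{sequence: long seq cpt supp and 3 arrows lemma}(2). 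As in \cref{duality: global-finite level} (using finiteness of $\Sha^2(C)$ and a d\'evissage to finite coefficients) this decreasing family stabilizes, so $\BBD^2_K(U,\CC)=\Sha^2(C)$ for $U$ small; combining with the previous paragraph gives $\rmD^0_{\sh}(U,\CC')\simeq\Sha^2(C)^D$ for such $U$, and taking $\drl_U$ yields $\Sha^0(C')\simeq\Sha^2(C)^D$.

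The main obstacle is the Artin--Verdier input: because $\BBH^0(U,\CC')$ has a non-torsion part (e.g.\ the contribution of $\cok(\CT'_2(U)\to\CT'_1(U))$), the pairing $\BBH^0(U,\CC')\times\BBH^2_c(U,\CC)\to\QZ$ cannot be perfect on the nose, so one must either quote it in the correct form (identifying $\BBH^2_c(U,\CC)^D$ with the profinite completion $\BBH^0(U,\CC')^{\wedge}$, which is harmless here since only the finite subquotient $\rmD^0_{\sh}(U,\CC')$ enters) or derive it from the finite--level Artin--Verdier duality \cref{duality: AV-finite level} by passing to the limit over $n$ through the Kummer sequences, via $\BBH^1_c(U,\CC\dtp\Z/n)^D\simeq\BBH^0(U,\CC'\dtp\Z/n)$ and the fact that $\BBH^2_c(U,\CC)$ is a quotient of $\drl_n\BBH^1_c(U,\CC\dtp\Z/n)$. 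One must also check that this identification is compatible with the localization maps into $\prod_v\BBH^0(K_v,C')$ and with the functoriality of compact--support cohomology used in the Three Arrows Lemma, and confirm the stabilization of $\BBD^2_K(U,\CC)$; these steps are routine but somewhat delicate.
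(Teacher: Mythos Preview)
Your overall architecture matches the paper's: set up the diagram comparing $\rmD^0_{\sh}(U,\CC')$ with $\BBD^2_K(U,\CC)^D$, show the decreasing family $\BBD^2_K(U,\CC)$ stabilises at $\Sha^2(C)$, and pass to the limit. The gap is precisely the one you flag in your last paragraph, and neither of your proposed fixes is sufficient as stated.

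The map $\BBH^0(U,\CC')\to\BBH^2_c(U,\CC)^D$ is \emph{not} an isomorphism, and your suggestion (a) of replacing the target by $\BBH^0(U,\CC')^{\wedge}$ does not help: you would then need $\rmD^0_{\sh}(U,\CC')$ to inject into the completion, but $\rmD^0_{\sh}(U,\CC')$ is not a priori finite (only its limit $\Sha^0(C')$ is), so it could meet the kernel of completion nontrivially. Your suggestion (b) is in the right direction but is not carried out.

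The paper's resolution is to work one prime $\ell$ at a time. The relevant Artin--Verdier input (from \cite{cat19WA}*{Proposition 1.2}) is that
\[
\BBH^0(U,\CC')\{\ell\}\longrightarrow\big(\BBH^2_c(U,\CC)^{(\ell)}\big)^D
\]
is \emph{surjective with divisible kernel}. Running the diagram chase with this, the induced map $\Phi_U:\rmD^0_{\sh}(U,\CC')\{\ell\}\to\big(\BBD^2_K(U,\CC)^{(\ell)}\big)^D$ is again surjective with divisible kernel. Passing to $\drl_U$, the kernel remains divisible, but it now sits inside the finite group $\Sha^0(C')\{\ell\}$, hence vanishes; this yields $\Sha^0(C')\{\ell\}\simeq\Sha^2(C)\{\ell\}^D$ and one sums over $\ell$. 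For the stabilisation of $\BBD^2_K(U,\CC)$ the paper does not use finiteness of $\Sha^2(C)$ directly but rather that $\BBH^2_c(U,\CC)$ (hence $\BBD^2_K(U,\CC)$) is torsion of cofinite type, so each $\ell$-primary part stabilises by \cite{HSz16}*{Lemma 3.7}.
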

\begin{proof}
We first identify $\Sha^2(C)$ with the image of $\BBH^2_c(U,\CC)$ in $\BBH^2(K,C)$ for $U$ sufficiently small.
Since $\BBH^2_c(U,\CC)$ is torsion of cofinite type by \cref{lemma: list of groups II}, so is $\BBD^2_K(U,\CC)$.
Hence the decreasing family $\{\BBD^2_K(U,\CC)\{\ell\}\}_{U\subset X_0}$ of $\ell$-primary torsion groups must be stable by \cite{HSz16}*{Lemma 3.7}.
Let us say $\BBD^2_K(U,\CC)\{\ell\}=\BBD^2_K(U_0,\CC)\{\ell\}$ for some open subset $U_0\subset X_0$ and for each non-empty open subset $U\subset U_0$.
Letting $U$ run through all non-empty open subsets of $U_0$, we conclude $\BBD^2_K(U_0,\CC)\{\ell\}=\Sha^2(C)\{\ell\}$ by \cref{sequence: long seq cpt supp and 3 arrows lemma}(3).

Let $\BBD^0_{\sh}(U,\CC')$ be the kernel of $\BBH^0(U,\CC')\to \prod_{v\in X^{(1)}}\BBH^0(K_v,C')$.
We consider the following exact \cmdm
(the lower row is exact because $\ell$-adic completion is right exact by \cite{cat19WA}*{Lemma 1.7})
\[
\xm{
0\ar[r] & \BBD^0_{\sh}(U,\CC')\{\ell\}\ar[r]\ar@{-->}[d]_{\Phi_U} & \BBH^0(U,\CC')\{\ell\}\ar[r]\ar[d] &
\big(\prod\limits_{v\in X^{(1)}}\BBH^0(K_v,C')\big)\{\ell\}\ar[d] \\
0\ar[r] & \big(\BBD^2_K(U,\CC)^{(\ell)}\big)^D\ar[r] & \big(\BBH^2_c(U,\CC)^{(\ell)}\big)^D\ar[r] &
\big(\big(\bigoplus\limits_{v\in X^{(1)}}\BBH^1(K_v,C)\big)^{(\ell)}\big)^D
}
\]
with the left vertical arrow $\Phi_U$ obtained from the commutativity of the right square.
Moreover, the right vertical arrow is an isomorphism by local duality \cite{cat19WA}*{Remark 1.7} and the middle one is surjective with divisible kernel by \cite{cat19WA}*{Proposition 1.2}.
It follows that the left vertical arrow is surjective and $\ker\Phi_U$ is divisible.
\inpart $\drl_U\ker\Phi_U$ is also divisible.
But $\drl_U\BBD^0_{\sh}(U,\CC')\{\ell\}\simeq \Sha^0(C')\{\ell\}$ is finite,
therefore it does not contain non-trivial divisible subgroup, i.e. $\drl_U\ker\Phi_U=0$ is trivial.
Consequently, we obtain the following identifications:
\[
\Sha^0(C')\{\ell\}
\simeq \drl_U\BBD^0_{\sh}(U,\CC')\{\ell\}
\simeq \drl_U \big(\BBD^2_K(U,\CC)^{(\ell)}\big)^D.
\]
Recall that $\Sha^2(C)$ is finite and $\BBD^2_K(U,\CC)\{\ell\}=\Sha^2(C)\{\ell\}$ for $U$ sufficiently small.
So $\BBD^2_K(U,\CC)^{(\ell)}\simeq\Sha^2(C)\{\ell\}^{(\ell)}\simeq \Sha^2(C)\{\ell\}$, and it follows that $\Sha^0(C')\{\ell\}\to \Sha^2(C)\{\ell\}^D$ is an isomorphism.
Since $\Sha^0(C')$ and $\Sha^2(C)$ are finite,
they are finite direct sums of $\ell$-primary parts
and therefore $\Sha^0(C')\times\Sha^2(C)\to\QZ$ is a perfect pairing.
\end{proof}

To connect the first two rows in the Poitou--Tate sequence (\ref{diagram: 12-term PT for short complex}),
we shall need an additional global duality concerning inverse limits.

\begin{lem}\label{injectivity: Pi mod n into product of Hi mod n for -1 and 0}
For $n\ge 1$ and $i\ge -1$,
the canonical \ttai $\BBH^i(\CO_v,\CC)/n\to \BBH^i(K_v,C)/n$ induced by the inclusion $\BBH^i(\CO_v,\CC)\to \BBH^i(K_v,C)$
is injective as well for $v\in X_0\uun$.
\inpart the homomorphism $\BBP^i(K,C)/n\to \prod_{v\in X^{(1)}}\BBH^i(K_v,C)/n$ induced by the inclusion $\BBP^i(K,C)\subset \prod_{v\in X^{(1)}}\BBH^i(K_v,C)$ is injective for $i\ge -1$.
Moreover, the image is the restricted topological product of $\BBH^i(K_v,C)/n$ \wrt the subgroups $\BBH^i(\CO_v,\CC)/n$.
\end{lem}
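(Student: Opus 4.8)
The plan is to deduce everything from the first (local) assertion, the global statements then following by a routine place‑by‑place patching argument. The only genuine input beyond the already‑established injection $\BBH^i(\CO_v,\CC)\hookrightarrow\BBH^i(K_v,C)$ of \cref{injectivity: cohohomology of O into K for -1 0 1 2} is the finite‑coefficient injectivity of \cref{lemma: inj of cohomology from Ov to Kv for cpx of tori in finite level}, which is what controls the $n$‑torsion of the relevant quotients.

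For the local statement, fix $v\in X_0^{(1)}$ and $n\ge 1$. First I would invoke the Kummer triangles $C\xra{n}C\to C\dtp\Z/n\to C[1]$ over $K_v$ and $\CC\xra{n}\CC\to\CC\dtp\Z/n\to\CC[1]$ over $\CO_v$; these are purely formal, coming from $\Z\xra{n}\Z\to\Z/n\to\Z[1]$, hence compatible with the restriction maps. Passing to hypercohomology they produce canonical injections $\BBH^i(\CO_v,\CC)/n\hookrightarrow\BBH^i(\CO_v,\CC\dtp\Z/n)$ and $\BBH^i(K_v,C)/n\hookrightarrow\BBH^i(K_v,C\dtp\Z/n)$ sitting in a commutative square with the two restriction maps. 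For $-1\le i\le 1$ the restriction $\BBH^i(\CO_v,\CC\dtp\Z/n)\to\BBH^i(K_v,C\dtp\Z/n)$ is injective by \cref{lemma: inj of cohomology from Ov to Kv for cpx of tori in finite level}; composing, the map $\BBH^i(\CO_v,\CC)/n\to\BBH^i(K_v,C\dtp\Z/n)$ is injective, and since it factors through $\BBH^i(K_v,C)/n\hookrightarrow\BBH^i(K_v,C\dtp\Z/n)$ the map $\BBH^i(\CO_v,\CC)/n\to\BBH^i(K_v,C)/n$ is injective too. For $i\ge 2$ one has $\BBH^i(\CO_v,\CC\dtp\Z/n)=0$ by \cref{remark: on degrees I}(3), so $\BBH^i(\CO_v,\CC)/n=0$ and the claim is vacuous. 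Combined with \cref{injectivity: cohohomology of O into K for -1 0 1 2}, this says precisely $\BBH^i(\CO_v,\CC)\cap n\BBH^i(K_v,C)=n\BBH^i(\CO_v,\CC)$ inside $\BBH^i(K_v,C)$.

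For the global statement I would patch. Injectivity of $\BBP^i(K,C)/n\to\prod_v\BBH^i(K_v,C)/n$ is the assertion $\BBP^i(K,C)\cap n\prod_v\BBH^i(K_v,C)=n\BBP^i(K,C)$; so take $a=(a_v)\in\BBP^i(K,C)$ with $a_v=nb_v$, $b_v\in\BBH^i(K_v,C)$. For all but finitely many $v$ we have $a_v\in\BBH^i(\CO_v,\CC)$, hence $a_v\in n\BBH^i(\CO_v,\CC)$ by the local statement, say $a_v=nc_v$ with $c_v\in\BBH^i(\CO_v,\CC)$; replacing $b_v$ by $c_v$ at those places gives an element of $\BBP^i(K,C)$ whose $n$‑multiple is $a$. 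For the image: it clearly lands in the restricted product of the $\BBH^i(K_v,C)/n$ with respect to the subgroups $\BBH^i(\CO_v,\CC)/n$ (well defined as subgroups, by the local statement), and conversely any element of that restricted product lifts component‑by‑component to an element of $\BBP^i(K,C)$, taking lifts inside $\BBH^i(\CO_v,\CC)$ at the cofinitely many $v$ where the component lies in $\BBH^i(\CO_v,\CC)/n$ and arbitrary lifts at the rest. Since the projections $A\to A/nA$ are open for topological abelian groups, a routine check shows this bijection is a homeomorphism for the restricted‑product topologies.

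The main obstacle is the local statement itself: mere injectivity of $\BBH^i(\CO_v,\CC)\hookrightarrow\BBH^i(K_v,C)$ does not imply the mod‑$n$ version, since one must bound the $n$‑torsion of the cokernel, and that is exactly what the finite‑level injectivity of \cref{lemma: inj of cohomology from Ov to Kv for cpx of tori in finite level} supplies through the Kummer sequences. The same subtlety reappears in the patching step, where the given local lifts $b_v$ may have to be modified by $n$‑torsion before they lie in $\BBH^i(\CO_v,\CC)$.
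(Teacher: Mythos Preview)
Your proof is correct and follows the same approach as the paper: reduce the local mod-$n$ injectivity to the finite-level injectivity of \cref{lemma: inj of cohomology from Ov to Kv for cpx of tori in finite level} via the Kummer triangle, then patch place by place. The paper's own argument is terser---it only writes out that the image of $\BBP^i(K,C)/n$ lands in the restricted product---whereas you explicitly spell out the global injectivity and the surjectivity onto the restricted product; but the underlying ideas are identical.
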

\begin{proof}
Thanks to the \distri $C\to C\to C\dtp\Z/n \to C[1]$, it suffices to show $\BBH^i(\CO_v,\CC\dtp\Z/n)\to \BBH^i(K_v,C\dtp\Z/n)$ is injective for each $v\in X_0\uun$
which is ensured by \cref{lemma: inj of cohomology from Ov to Kv for cpx of tori in finite level}.
Let $(x_v)\in \BBP^i(K,C)/n$ and let $(\wt{x}_v)\in \BBP^i(K,C)$ be a family of lifts of $x_v\in \BBH^i(K_v,C)/n$ in $\BBH^i(K_v,C)$.
So $\wt{x}_v\in \BBH^i(\CO_v,\CC)$ for all but finitely many $v$.
\inpart its image in $\BBH^i(K_v,C)/n$ lies in the subgroup $\BBH^i(\CO_v,\CC)/n$.
\end{proof}

\begin{thm}\label{duality: global 0 2 hard one}
Put
$\Sha^0_{\wedge}(C)\ce\ker\big(\BBH^0(K,C)_{\wedge}\to \BBP^0(K,C)_{\wedge}\big)$.
If $\ker\rho$ is finite, then there is a \ppair of finite groups
\[
\Sha^0_{\wedge}(C)\times \Sha^2(C')\to \QZ.
\]
\end{thm}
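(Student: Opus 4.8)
The plan is to realize $\Sha^0_{\wedge}(C)$ as an inverse limit, and $\Sha^2(C')^{D}$ as a direct limit, of the finite groups occurring in the finite-level duality of \cref{duality: global-finite level}, and then to pass to the limit. First I would extract from the hypothesis the cohomological finiteness it provides: by \cref{remark: the symmetry of finite kernel and surjectivity}, finiteness of $\ker\rho$ is equivalent to surjectivity of $\rho':T_2'\to T_1'$, so that $C'[-1]$ is quasi-isomorphic to the group of multiplicative type $\ker\rho'$; moreover, by that remark together with \cref{lemma: list of groups II}, the groups $\BBH^1(K,C)$ and $\BBH^0(K,C')$ are torsion of finite exponent, $\BBH^3(K,C')=0$, and $\BBH^1(K_v,C)$ is finite for every $v$; and we already know $\Sha^1(C')$ and $\Sha^2(C')$ are finite.

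Next I would record the finite-level input. For each $n$, \cref{duality: global-finite level} (case $i=0$) gives a perfect pairing of finite groups $\Sha^0(C\dtp\Z/n)\times\Sha^1(C'\dtp\Z/n)\to\QZ$, and these are compatible as $n$ varies: for $n\mid m$ the reduction $\Z/m\twoheadrightarrow\Z/n$ and the inclusion $\Z/n\hookrightarrow\Z/m$ are Pontryagin-adjoint, so the induced transition maps $\Sha^0(C\dtp\Z/m)\to\Sha^0(C\dtp\Z/n)$ and $\Sha^1(C'\dtp\Z/n)\to\Sha^1(C'\dtp\Z/m)$ are adjoint under the pairings. Hence $\varprojlim_n\Sha^0(C\dtp\Z/n)\simeq\big(\varinjlim_n\Sha^1(C'\dtp\Z/n)\big)^{D}$ (an inverse limit of Pontryagin duals is the dual of the colimit).

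It then remains to identify $\varprojlim_n\Sha^0(C\dtp\Z/n)$ with $\Sha^0_{\wedge}(C)$, and $\varinjlim_n\Sha^1(C'\dtp\Z/n)$ with $\Sha^2(C')$ — and this is where the hypothesis is genuinely used, through the behaviour of the transition maps. For the first: the Kummer triangle $C\to C\to C\dtp\Z/n\to C[1]$ yields $0\to\BBH^0(K,C)/n\to\BBH^0(K,C\dtp\Z/n)\to{_n}\BBH^1(K,C)\to0$; since $\BBH^1(K,C)$ has finite exponent the transition maps on ${_n}\BBH^1(K,C)$ are multiplication by $m/n$, hence eventually vanish, so $\varprojlim_n{_n}\BBH^1(K,C)=0$, while surjectivity of the transitions on $\BBH^0(K,C)/n$ kills the ${\varprojlim}^{1}$-term; thus $\varprojlim_n\BBH^0(K,C\dtp\Z/n)\simeq\BBH^0(K,C)_{\wedge}$. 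The same argument over each $K_v$ (using finiteness of $\BBH^1(K_v,C)$) and $\CO_v$, together with \cref{injectivity: Pi mod n into product of Hi mod n for -1 and 0} identifying $\varprojlim_n\BBP^0(K,C\dtp\Z/n)$ with $\BBP^0(K,C)_{\wedge}$, gives by left-exactness of $\varprojlim$ that $\varprojlim_n\Sha^0(C\dtp\Z/n)\simeq\Sha^0_{\wedge}(C)$. For the second: since $C'\dtp\Z/n$ is the shift of $(\ker\rho')\dtp\Z/n$ and $\BBH^3(K,C')=0$, the dual Kummer triangle together with finiteness of $\Sha^1(C')$ and $\Sha^2(C')$ shows that, up to groups that die in the colimit, $\Sha^1(C'\dtp\Z/n)$ stabilizes to $\Sha^2(C')$, i.e.\ $\varinjlim_n\Sha^1(C'\dtp\Z/n)\simeq\Sha^2(C')$.

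Combining these, $\Sha^0_{\wedge}(C)\simeq\varprojlim_n\Sha^0(C\dtp\Z/n)\simeq\big(\varinjlim_n\Sha^1(C'\dtp\Z/n)\big)^{D}\simeq\Sha^2(C')^{D}$; since $\Sha^2(C')$ is finite, so is $\Sha^0_{\wedge}(C)$, and the pairing $\Sha^0_{\wedge}(C)\times\Sha^2(C')\to\QZ$ thus obtained in the limit is perfect and functorial, being the limit of the perfect functorial pairings of \cref{duality: global-finite level} and induced by the canonical pairing $C\dtp C'\to\Z(2)[3]$. I expect the main obstacle to be exactly the limit bookkeeping of the third paragraph: matching $\BBP^0(K,C)_{\wedge}$ with $\varprojlim_n\BBP^0(K,C\dtp\Z/n)$ — i.e.\ controlling how $\varprojlim_n$ interacts with the restricted topological product and with the varying integral subgroups $\BBH^0(\CO_v,\CC\dtp\Z/n)$ — and the companion analysis of $\varinjlim_n\Sha^1(C'\dtp\Z/n)$, where one must be careful that $\varinjlim$ does not commute with the infinite product over all places. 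It is precisely the finiteness of $\ker\rho$ — equivalently, by \cref{remark: the symmetry of finite kernel and surjectivity}, the finite exponent of $\BBH^1(K,C)$ and the vanishing of $\BBH^3(K,C')$ — that tames the transition maps enough for these limit identifications to hold, which is the difficulty singled out in the introduction.
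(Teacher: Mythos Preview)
Your strategy is the natural one, and the identification $\Sha^0_{\wedge}(C)\simeq\prl_n\Sha^0(C\dtp\Z/n)$ is correct; in fact the paper proves it (\cref{lemma: an interpretation of Sha wedge}) without using finiteness of $\ker\rho$, via the finiteness of $\Sha^1(C)$ rather than the finite exponent of $\BBH^1(K,C)$. The real gap is in the companion identification $\drl_n\Sha^1(C'\dtp\Z/n)\simeq\Sha^2(C')$.

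Your sketch gives only the inclusion $\drl_n\Sha^1(C'\dtp\Z/n)\hookrightarrow\Sha^2(C')$. From the snake lemma applied to the Kummer rows over $K$ and over $\prod_v K_v$ one gets
\[
0\to \ker f_n\to \Sha^1(C'\dtp\Z/n)\to {_n}\Sha^2(C')\to \cok f_n,
\]
where $f_n:\BBH^1(K,C')/n\to\prod_v\BBH^1(K_v,C')/n$. After $\drl_n$, the left-hand kernel dies because $\BBH^1(K,C')$ is torsion, so $\drl_n\BBH^1(K,C')/n=\BBH^1(K,C')\ots\QZ=0$. But the surjectivity onto $\Sha^2(C')$ requires the connecting map $\Sha^2(C')\to\drl_n\cok f_n$ to vanish, i.e.\ that every $\alpha\in\Sha^2(C')$ admits a lift $\tilde\alpha\in\BBH^1(K,C'\dtp\Z/n)$ with $\tilde\alpha_v=0$ for \emph{all} $v$ simultaneously. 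The groups $\BBH^1(K_v,C')\simeq H^2(K_v,\ker\rho')$ are Brauer-type and do \emph{not} have a uniform finite exponent in $v$, so one cannot kill the local contributions by enlarging $n$; this is exactly the failure of $\drl_n$ to commute with $\prod_v$ that you flag but do not resolve. Neither the finite exponent of $\BBH^2(K,C')$ nor the finiteness of $\Sha^1(C')$, $\Sha^2(C')$ suffices here.

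The paper circumvents this by changing the order of limits: it does \emph{not} take $\drl_n$ of $\Sha^1(C'\dtp\Z/n)$ directly. Instead it rewrites both sides as limits over open $U\subset X_0$ of groups $\prl_n\BBD^0(U,\CC\dtp\Z/n)$ and $\BBD^1_{\sh}(U,\drl_n\CC'\dtp\Z/n)$, and proves a perfect pairing between these for each \emph{fixed} $U$ using \ArtVer duality and \cref{proposition: key ex-seq in deg -1 to 1 with finite coeff}. The finiteness of $\ker\rho$ enters only to guarantee that $\prl_n\BBH^0(U,\CC\dtp\Z/n)$ injects into $\prl_n\BBH^0(K,C\dtp\Z/n)$, so that the inverse limit over $U$ of the $\BBD^0$-groups makes sense and recovers $\prl_n\Sha^0(C\dtp\Z/n)$. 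Working at the $U$-level replaces the uncontrolled infinite product $\prod_v\BBH^1(K_v,-)$ by a single cohomology group $\BBH^1(U,\drl_n\CC'\dtp\Z/n)$, which is what makes the argument go through.
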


The rest of this section is devoted to the proof of \cref{duality: global 0 2 hard one}
which is analogous to that of \cite{Cyril-these}*{pp.~86-88}.
We proceed by reducing the question into various limits in finite level.

\begin{lem}\label{lemma: an interpretation of Sha wedge}
Let $C=[T_1\stra{\rho} T_2]$ $($here $\ker\rho$ is not necessarily finite$)$.
There is an isomorphism
\[
\Sha^0_{\wedge}(C)
\simeq
\prl_n\Sha^0(C\dtp\Z/n).
\]
\end{lem}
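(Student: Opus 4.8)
The plan is to compare the global Kummer triangle $C\xra{n}C\to C\dtp\Z/n\to C[1]$ with its local analogues over the $K_v$ and then pass to the inverse limit over $n$ ordered by divisibility. Taking $\BBH^\bullet(K,-)$ and $\BBH^\bullet(K_v,-)$ of these triangles produces, for each $n$, short exact sequences $0\to\BBH^0(K,C)/n\to\BBH^0(K,C\dtp\Z/n)\to {_n}\BBH^1(K,C)\to 0$ globally and, after taking $\prod_v$ of the local ones, $0\to\prod_v\BBH^0(K_v,C)/n\to\prod_v\BBH^0(K_v,C\dtp\Z/n)\to\prod_v{_n}\BBH^1(K_v,C)\to 0$. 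The localisation maps give a morphism of the former into the latter, and the snake lemma yields an exact sequence, functorial in $n$,
\[
0\to\wt{\Sha}{}^0_n\to\Sha^0(C\dtp\Z/n)\to {_n}\Sha^1(C),\qquad\text{where }\wt{\Sha}{}^0_n\ce\ker\!\big(\BBH^0(K,C)/n\to\prod_v\BBH^0(K_v,C)/n\big)
\]
and ${_n}\Sha^1(C)$ is the $n$-torsion subgroup of $\Sha^1(C)$.

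Next I would determine the transition maps. For $m\mid n$ the morphism $C\dtp\Z/n\to C\dtp\Z/m$ fits into a morphism of Kummer triangles in which the arrow between the first copies of $C$ is multiplication by $n/m$ and that between the second copies is the identity. Chasing this through the long exact sequences, the induced transition maps on $\{\BBH^0(-,C)/n\}_n$ are the canonical projections — so that $\prl_n\BBH^0(K,C)/n=\BBH^0(K,C)_{\wedge}$ by definition and $\prl_n\prod_v\BBH^0(K_v,C)/n=\prod_v\BBH^0(K_v,C)_{\wedge}$, since inverse limits commute with products — whereas the induced transition maps on $\{{_n}\BBH^1(K,C)\}_n$, hence on $\{{_n}\Sha^1(C)\}_n$, are multiplication by $n/m$. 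Because $\Sha^1(C)$ is finite by \cref{remark: on degrees III Sha C}, the inverse limit $\prl_n{_n}\Sha^1(C)$ vanishes: $\ell$-adically the system is eventually $\Sha^1(C)\{\ell\}$ with transition maps multiplication by $\ell$, which is nilpotent on this finite group. Applying the left-exact functor $\prl_n$ to the three-term sequence above and using $\prl_n{_n}\Sha^1(C)=0$ then gives an isomorphism $\prl_n\wt{\Sha}{}^0_n\isoto\prl_n\Sha^0(C\dtp\Z/n)$.

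Finally I would identify $\prl_n\wt{\Sha}{}^0_n$ with $\Sha^0_{\wedge}(C)$. Applying $\prl_n$ to the left-exact sequence $0\to\wt{\Sha}{}^0_n\to\BBH^0(K,C)/n\to\prod_v\BBH^0(K_v,C)/n$ and using the identifications above gives $\prl_n\wt{\Sha}{}^0_n=\ker\big(\BBH^0(K,C)_{\wedge}\to\prod_v\BBH^0(K_v,C)_{\wedge}\big)$. By \cref{injectivity: Pi mod n into product of Hi mod n for -1 and 0} the map $\BBP^0(K,C)/n\to\prod_v\BBH^0(K_v,C)/n$ is injective, hence so is $\BBP^0(K,C)_{\wedge}\to\prod_v\BBH^0(K_v,C)_{\wedge}$; since the localisation map $\BBH^0(K,C)\to\prod_v\BBH^0(K_v,C)$ factors through $\BBP^0(K,C)$, this last kernel equals $\ker\big(\BBH^0(K,C)_{\wedge}\to\BBP^0(K,C)_{\wedge}\big)=\Sha^0_{\wedge}(C)$, which would complete the proof. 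The main obstacle I anticipate is in the second step: one must check that the snake sequence is functorial in $n$ and, above all, that $C\dtp\Z/n\to C\dtp\Z/m$ really induces multiplication by $n/m$ on the ${_n}\BBH^1$-terms, since it is precisely this that lets the finiteness of $\Sha^1(C)$ annihilate the obstruction group $\prl_n{_n}\Sha^1(C)$; the remaining steps are routine diagram chasing together with left-exactness of inverse limits.
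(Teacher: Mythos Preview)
Your proof is correct and follows essentially the same route as the paper's: both arguments rest on the Kummer short exact sequences, the finiteness of $\Sha^1(C)$ giving $\prl_n{_n}\Sha^1(C)=0$, and the injectivity statement of \cref{injectivity: Pi mod n into product of Hi mod n for -1 and 0}. The only cosmetic differences are that the paper works with $\BBP^0$ throughout rather than the full product $\prod_v$ (reducing to $\BBP^0$ only at the end, as you do), and it passes to the inverse limit before applying the snake lemma rather than after.
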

\begin{proof}
Consider the Kummer \ttess
$0\to \BBH^0(K_v,C)/n\to \BBH^0(K_v,C\dtp\Z/n)\to {_n}\BBH^1(K_v,C)\to 0$ for all $v\in X^{(1)}$,
and
$0\to \BBH^0(\CO_v,\CC)/n\to \BBH^0(\CO_v,\CC\dtp\Z/n)\to {_n}\BBH^1(\CO_v,\CC)\to 0$ for all $v\in X_0^{(1)}$.
Moreover, the complex $0\to \BBP^0(K,C)/n\to \BBP^0(K,C\dtp\Z/n)\to {_n}\BBP^1(K,C)\to 0$ is an \ttes
by \cref{injectivity: Pi mod n into product of Hi mod n for -1 and 0}.
Therefore there is a \cmdm with exact rows
by taking inverse limit over all $n$ in the respective \Kumseqs
\beac\label{diagram: Pi mod n into product of Hi mod n for -1 and 0}
\xm{
0\ar[r] & \BBH^0(K,C)_{\wedge}\ar[r]\ar[d] & \prl_n\BBH^0(K,C\dtp\Z/n)\ar[r]\ar[d] & \Phi_K\ar[r]\ar[d] & 0 \\
0\ar[r] & \BBP^0(K,C)_{\wedge}\ar[r] & \prl_n\BBP^0(K,C\dtp\Z/n)\ar[r] & \Phi_{\Pi}\ar[r] & 0.
}
\eeac
where $\Phi_K\subset \prl_n{_n}\BBH^1(K,C)$ and $\Phi_{\Pi}\subset \prl_n{_n}\BBP^1(K,C)$
(here the inverse limit may not be right exact because the involved groups are infinite).
Recall that $\Sha^1(C)$ is finite (\cref{remark: on degrees III Sha C}).
As a consequence, the kernel of the right vertical arrow is contained in $\prl_n{_n}\Sha^1(C)=0$.
Therefore there are isomorphisms
\[
\Sha^0_{\wedge}(C)
\simeq
\ker\big(\prl_n\BBH^0(K,C\dtp\Z/n)\to\prl_n\BBP^0(K,C\dtp\Z/n)\big)
\simeq\prl_n\Sha^0(C\dtp\Z/n)
\]
by the snake lemma, as required.
\end{proof}

The following lemmas tell us that $\prl_n\Sha^0(C\dtp\Z/n)$ is an inverse limit of subgroups of $\BBH^0(U,\CC\dtp\Z/n)$.

\begin{lem}\label{lemma: inverse limit of cohlgy of finite group vanishes}
Let $\CF$ be a finite commutative group scheme over $X_0$.
Then $\prl_n H^i(X_0,{_n}\CF)=0$ for $i\ge 0$.
\end{lem}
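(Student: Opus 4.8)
The plan is to reduce everything to the elementary fact that a finite group scheme is killed by an integer, taking care about how the inverse system $\{{}_n\CF\}_n$ is indexed. Recall that $\prl_n{}_nA$ is the system dual to $A_\wedge=\prl_n A/nA$, so for $k\mid k'$ the transition map ${}_{k'}A\to {}_kA$ is multiplication by $k'/k$, not a reduction map; likewise for the sheaves ${}_n\CF$. This is the one point one must not get wrong.

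First I would note that, $\CF$ being finite over the connected scheme $X_0$, there is an integer $N\ge 1$ with $N\cdot\id_\CF=0$ as an endomorphism of the étale sheaf $\CF$ (for instance the order of $\CF$, which is constant on $X_0$). Hence ${}_k\CF=\CF$ for every multiple $k$ of $N$. I would then apply the additive functor $H^i(X_0,-)$: from $N\cdot\id_\CF=0$ it follows that $H^i(X_0,\CF)$ is annihilated by $N$.

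Finally, since the multiples of $N$ are cofinal in $(\N,\mid)$, I would compute $\prl_n H^i(X_0,{}_n\CF)$ along that cofinal subsystem; its terms are all $H^i(X_0,\CF)$ and its transition map from index $k'$ to index $k$ (with $k\mid k'$) is multiplication by $k'/k$. As $H^i(X_0,\CF)$ is $N$-torsion, the transition map from index $kN$ to index $k$ is multiplication by $N$, hence zero, so the subsystem is pro-zero and the limit vanishes. I do not expect a genuine obstacle here: the argument is soft and uniform in $i$, and the only subtlety is the bookkeeping of transition maps handled in the first paragraph.
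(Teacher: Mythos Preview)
Your proof is correct and is essentially the same as the paper's: both hinge on choosing $N$ with $N\cdot\CF=0$, observing that the transition map from index $Nn$ to index $n$ is multiplication by $N$, and that this kills the $N$-torsion group $H^i(X_0,\CF)$. The paper phrases this as ``$x_n=Nx_{Nn}=0$'' for an element $(x_n)$ of the limit, while you pass to the cofinal subsystem of multiples of $N$ and note it is pro-zero; these are the same argument.
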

\begin{proof}
Since $\CF$ is finite, $\CF={_N}\CF$ for some positive integer $N$.
Take $(x_n)\in \prl_n H^i(X_0,{_{n}}\CF)$.
Then $x_n=Nx_{Nn}$ for each positive integer $n$ and it follows that $x_n=0$,
i.e. $\prl_n H^i(X_0,{_n}\CF)=0$.
\end{proof}

\begin{lem}
Suppose $\ker\rho$ is finite.
Then $\prl_n \BBH^0(X_0,\CC\dtp\Z/{n})\to \prl_n\BBH^0(K,C\dtp\Z/{n})$ is injective.
\end{lem}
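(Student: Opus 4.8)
The plan is to reduce the asserted injectivity to two inputs at finite level: the injectivity of $H^1(X_0,T_{\Z/n}(\CC))\to H^1(K,T_{\Z/n}(C))$ for each fixed $n$, and the fact that the inverse system $\{H^i(X_0,{_n}\CM)\}_n$ is pro-zero. The triangle ${_n}\CM[2]\to\CC\dtp\Z/n\to T_{\Z/n}(\CC)[1]\to{_n}\CM[3]$ is exactly what separates the "bad" finite-level contributions ($H^2$ of $\CM$, killed in the limit by Lemma \ref{lemma: inverse limit of cohlgy of finite group vanishes}) from the "good" ones ($H^1$ of the finite \'etale scheme $T_{\Z/n}(\CC)$, where the comparison map is injective).

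First, since $M=\ker\rho$ is finite and $X_0$ is normal, the group of multiplicative type $\CM=\ker(\CT_1\to\CT_2)$ has finite character sheaf and is therefore a finite group scheme over $X_0$; write $N$ for its exponent. For each $n\ge 1$ I would apply the triangle above over $X_0$, together with the corresponding triangle over $\Spec K$; the associated long exact sequences in degree zero fit into a commutative ladder
\[
\xymatrix@C=14pt{
H^2(X_0,{_n}\CM)\ar[r]^-{a_n}\ar[d] & \BBH^0(X_0,\CC\dtp\Z/n)\ar[r]^-{b_n}\ar[d] & H^1(X_0,T_{\Z/n}(\CC))\ar[d]\\
H^2(K,{_n}M)\ar[r] & \BBH^0(K,C\dtp\Z/n)\ar[r] & H^1(K,T_{\Z/n}(C))
}
\]
with $\ker b_n=\Im a_n=:A_n$, and the whole ladder is functorial in $n$, hence compatible with the transition maps of the inverse systems.

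The two inputs go as follows. (i) By \cite{Dem11}*{Lemme 2.3} (recalled above) $T_{\Z/n}(\CC)$ is a finite group scheme of multiplicative type over $X_0$, hence finite \'etale since the characteristic is zero, with generic fibre $T_{\Z/n}(C)$ (restriction to the generic point is exact). A torsor under a finite \'etale group scheme over the normal integral scheme $X_0$ which acquires a section over the generic point $\Spec K$ is already trivial — the closure of the section is a connected component of the torsor mapping isomorphically onto $X_0$ — so $H^1(X_0,T_{\Z/n}(\CC))\to H^1(K,T_{\Z/n}(C))$ is injective. (ii) Exactly as in the proof of \cref{lemma: inverse limit of cohlgy of finite group vanishes}, the transition map $H^2(X_0,{_{Nn}}\CM)\to H^2(X_0,{_n}\CM)$ is multiplication by $N$ on $H^2(X_0,\CM)$, which is killed by $N$, hence zero; since $A_n$ is a quotient of $H^2(X_0,{_n}\CM)$ compatibly with the transition maps, the transition maps on $\{A_n\}_n$ are likewise cofinally zero, so $\prl_n A_n=0$.

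To conclude, let $(x_n)$ lie in the kernel of $\prl_n\BBH^0(X_0,\CC\dtp\Z/n)\to\prl_n\BBH^0(K,C\dtp\Z/n)$, so $x_n$ maps to $0$ in $\BBH^0(K,C\dtp\Z/n)$ for every $n$. By commutativity of the ladder, $b_n(x_n)$ maps to $0$ in $H^1(K,T_{\Z/n}(C))$, hence $b_n(x_n)=0$ by (i), i.e.\ $x_n\in A_n$; therefore $(x_n)\in\prl_n A_n=0$ by (ii), which proves the injectivity. The only delicate point is the bookkeeping in (ii): one must check that the transition maps on $\{H^i(X_0,{_n}\CM)\}_n$ — those induced by the reductions $\Z/Nn\twoheadrightarrow\Z/n$ that also govern the transition maps of the whole system $\{\BBH^\bullet(X_0,\CC\dtp\Z/n)\}_n$ — really are the multiplication-by-$N$ maps, so that the pro-systems in play are genuinely pro-zero rather than merely of vanishing inverse limit (a quotient of a system with vanishing $\prl$ need not have vanishing $\prl$, but a quotient of a pro-zero system is pro-zero). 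Everything else is formal diagram chasing.
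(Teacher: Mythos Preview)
Your proof is correct and follows essentially the same route as the paper: the same distinguished triangle ${_n}\CM[2]\to\CC\dtp\Z/n\to T_{\Z/n}(\CC)[1]$, the same two inputs (injectivity of $H^1(X_0,T_{\Z/n}(\CC))\to H^1(K,T_{\Z/n}(C))$ and the vanishing coming from \cref{lemma: inverse limit of cohlgy of finite group vanishes}), and the same d\'evissage. Your write-up is in fact more careful than the paper's on one point: you correctly observe that merely knowing $\prl_n H^2(X_0,{_n}\CM)=0$ does not automatically force $\prl_n A_n=0$ for the quotient system $A_n=\Im a_n$, and you supply the missing justification that the transition maps are cofinally zero (so the system is pro-zero and this passes to quotients) --- the paper's one-line ``by d\'evissage'' leaves this implicit.
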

\begin{proof}
By \cref{lemma: inverse limit of cohlgy of finite group vanishes},
$\prl_n H^i(X_0,{_n}\CM)=0$.
Consider the \distri ${_{n}}M[2]\to C\dtp\Z/{n}\to T_{\Z/{n}}(C)\to {_{n}}M[3]$.
Thus $\prl_n \BBH^0(X_0,\CC\dtp\Z/{n})\to \prl_n\BBH^0(K,C\dtp\Z/{n})$ is injective by d\'evissage
(recall that $H^1(X_0,T_{\Z/{n}}(\CC))\to H^1(K,T_{\Z/{n}}(C))$ is injective).
\end{proof}

We put
$\BBD^i(U,\CC\dtp\Z/n)\ce\Im\big(\BBH^i_c(U,\CC\dtp\Z/n)\to \BBH^i(U,\CC\dtp\Z/n)\big)$.
So there are inclusions
\[
\prl_n\BBD^i(U,\CC\dtp\Z/n)
\subset \prl_n\BBH^i(U,\CC\dtp\Z/n)
\subset \prl_n\BBH^i(K,C\dtp\Z/n).
\]
If $V\subset U$ is an open subset,
then $\prl_n\BBD^i(V,\CC\dtp\Z/n)\subset \prl_n\BBD^i(U,\CC\dtp\Z/n)$.
In this way, we can take the inverse limit $\prl_U\prl_n\BBD^0(U,\CC\dtp\Z/n)$ over all $U$.

\begin{lem}
Suppose $\ker\rho$ is finite.
There is an isomorphism
\[\prl_U\prl_n\BBD^0(U,\CC\dtp\Z/n)\simeq\prl_n\Sha^0(C\dtp\Z/n)\]
with transition maps given by covariant \hzx of the functor $\BBH^0_c(-,\CC\dtp\Z/n)$.
\end{lem}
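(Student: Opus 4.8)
The plan is to replace the groups $\BBD^0(U,\CC\dtp\Z/n)$, which a priori sit inside $\BBH^0(U,\CC\dtp\Z/n)$, by the groups $\BBD^0_K(U,\CC\dtp\Z/n)\ce\Im\big(\BBH^0_c(U,\CC\dtp\Z/n)\to\BBH^0(K,C\dtp\Z/n)\big)$ that occur in \cref{proposition: key ex-seq in deg -1 to 1 with finite coeff} and in the proof of \cref{duality: global-finite level}, and then to interchange the two inverse limits.

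First I would fix $U$ and compare $\BBD^0(U,\CC\dtp\Z/n)$ with $\BBD^0_K(U,\CC\dtp\Z/n)$ after passing to $\prl_n$. The localization map $\BBH^0(U,\CC\dtp\Z/n)\to\BBH^0(K,C\dtp\Z/n)$ sends $\BBD^0(U,\CC\dtp\Z/n)$ onto $\BBD^0_K(U,\CC\dtp\Z/n)$, and all three groups are finite by \ArtVer duality \cref{duality: AV-finite level}; write $K^U_n$ for the kernel of this surjection. Since $\ker\rho$ is finite, the lemma immediately preceding this one — whose proof is valid verbatim over any non-empty open $U\subseteq X_0$, as it only uses \cref{lemma: inverse limit of cohlgy of finite group vanishes} together with the injectivity of $H^1(U,T_{\Z/n}(\CC))\to H^1(K,T_{\Z/n}(C))$ — shows that $\prl_n\BBH^0(U,\CC\dtp\Z/n)\to\prl_n\BBH^0(K,C\dtp\Z/n)$ is injective. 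Hence $\prl_n K^U_n\subseteq\ker\big(\prl_n\BBH^0(U,\CC\dtp\Z/n)\to\prl_n\BBH^0(K,C\dtp\Z/n)\big)=0$ by left exactness of $\prl$, while $\dprl{n}K^U_n=0$ by Mittag--Leffler since the $K^U_n$ are finite; taking $\prl_n$ of the exact sequences $0\to K^U_n\to\BBD^0(U,\CC\dtp\Z/n)\to\BBD^0_K(U,\CC\dtp\Z/n)\to0$ therefore gives an isomorphism $\prl_n\BBD^0(U,\CC\dtp\Z/n)\isoto\prl_n\BBD^0_K(U,\CC\dtp\Z/n)$. Under this identification $\prl_n\BBD^0(U,\CC\dtp\Z/n)$ becomes a subgroup of $\prl_n\BBH^0(K,C\dtp\Z/n)$, and for $V\subseteq U$ the transition map induced by covariant functoriality of $\BBH^0_c(-,\CC\dtp\Z/n)$ corresponds to the inclusion $\prl_n\BBD^0_K(V,\CC\dtp\Z/n)\subseteq\prl_n\BBD^0_K(U,\CC\dtp\Z/n)$, the map $\BBH^0_c(V,\CC\dtp\Z/n)\to\BBH^0(K,C\dtp\Z/n)$ factoring through $\BBH^0_c(U,\CC\dtp\Z/n)$ by the Three Arrows Lemma \cref{sequence: long seq cpt supp and 3 arrows lemma}(4).

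Next I would compute the limit over $U$ on the $\BBD^0_K$ side. For each fixed $n$, $\{\BBD^0_K(U,\CC\dtp\Z/n)\}_U$ is a decreasing family of finite subgroups of $\BBH^0(K,C\dtp\Z/n)$, so it stabilizes, and its stable value — equivalently $\bigcap_U\BBD^0_K(U,\CC\dtp\Z/n)$ — equals $\Sha^0(C\dtp\Z/n)$: this is part of the analysis in the proof of \cref{duality: global-finite level} (extended to the range $-1\le i\le2$ by symmetry, as noted in \cref{remark: on degrees II}), and can also be checked directly, since $\Sha^0(C\dtp\Z/n)\subseteq\BBD^0_K(U,\CC\dtp\Z/n)$ for $U$ small by \cref{sequence: long seq cpt supp and 3 arrows lemma}(3), whereas any element of $\bigcap_U\BBD^0_K(U,\CC\dtp\Z/n)$ is locally trivial at every place because each place eventually lies outside $U$. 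Thus $\prl_U\BBD^0_K(U,\CC\dtp\Z/n)=\Sha^0(C\dtp\Z/n)$, naturally in $n$.

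Finally, interchanging the two cofiltered limits,
\[
\prl_U\prl_n\BBD^0(U,\CC\dtp\Z/n)\;\simeq\;\prl_U\prl_n\BBD^0_K(U,\CC\dtp\Z/n)\;\simeq\;\prl_n\prl_U\BBD^0_K(U,\CC\dtp\Z/n)\;\simeq\;\prl_n\Sha^0(C\dtp\Z/n),
\]
which is the assertion. The main obstacle is the comparison step of the second paragraph: it is precisely there that the hypothesis ``$\ker\rho$ finite'' enters, through the injectivity of $\prl_n\BBH^0(U,\CC\dtp\Z/n)\to\prl_n\BBH^0(K,C\dtp\Z/n)$, which can fail without it since $\prl_n$ need not be exact on the (possibly infinite) cohomology groups involved; and one must keep track of the fact that, at finite level $n$, the groups $\BBD^0(U,\CC\dtp\Z/n)$ carry no evident transition maps in $U$ — these materialize only after $\prl_n$, via the embedding into $\BBH^0(K,C\dtp\Z/n)$ furnished by the preceding lemma.
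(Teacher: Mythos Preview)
Your proof is correct and follows essentially the same strategy as the paper: embed the groups $\prl_n\BBD^0(U,\CC\dtp\Z/n)$ into $\prl_n\BBH^0(K,C\dtp\Z/n)$ via the preceding injectivity lemma (extended from $X_0$ to every $U$), and then identify the limit over $U$ with $\prl_n\Sha^0(C\dtp\Z/n)$ using \cref{sequence: long seq cpt supp and 3 arrows lemma}(3). The paper's own proof is considerably more compressed: having already recorded the inclusions $\prl_n\BBD^0(U,\CC\dtp\Z/n)\subset\prl_n\BBH^0(K,C\dtp\Z/n)$ in the text preceding the lemma, it simply takes the intersection $\bigcap_U$ inside $\prl_n\BBH^0(K,C\dtp\Z/n)$ and invokes \cref{sequence: long seq cpt supp and 3 arrows lemma}(3) directly, without introducing the auxiliary groups $\BBD^0_K(U,\CC\dtp\Z/n)$, without the Mittag--Leffler step, and without an explicit interchange of limits. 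Your route through $\BBD^0_K$ and the stabilization argument from the proof of \cref{duality: global-finite level} makes the same logic transparent at each stage, but is not a genuinely different method.
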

\begin{proof}
Since $\prl_n \BBH^0(X_0,\CC\dtp\Z/{n})\to \prl_n\BBH^0(K,C\dtp\Z/{n})$ is injective,
we can take $\bigcap_{U\subset X_0} \prl_n\BBD^0(U,\CC\dtp\Z/n)$ in the inverse limit $\prl_n\BBH^0(K,C\dtp\Z/n)$.
By \cref{sequence: long seq cpt supp and 3 arrows lemma}(3), we conclude $\bigcap_{U\subset X_0} \prl_n\BBD^0(U,\CC\dtp\Z/n)=\prl_n\Sha^0(C\dtp\Z/n)$.
Finally, the intersection over $U\subset X_0$ coincides with inverse limit because of the covariant \hzx of compact support cohomology.
\end{proof}

Next we describe $\Sha^2(C')$.
Again we write $L$ for $K$ or $K_v$ and consider the \Kumseq
$0\to \BBH^1(L,C')/n\to \BBH^1(L,C'\dtp\Z/n)\to {_n}\BBH^2(L,C')\to 0$.
Since $\BBH^i(L,C')$ is torsion for $i\ge 1$, taking the direct limit over all $n$ yields an isomorphism
$\BBH^1(L,\drl_n C'\dtp\Z/n)\simeq\BBH^2(L,C')$.
\inpart we obtain $\Sha^1(\drl_n C'\dtp\Z/n)\simeq\Sha^2(C')$.
Put
\[\BBD^1_{\sh}(U,\drl_n\CC'\dtp\Z/n)\ce
\ker\big(\BBH^1(U,\drl_n\CC'\dtp\Z/n)\to \tstprodlim_{v\in X^{(1)}}\drl\limits_n H^1(K_v,C'\dtp\Z/n)\big).\]
If $V\subset U$ is a smaller open subset, then there is a \ttai
$\BBD^1_{\sh}(U,\drl_n\CC'\dtp\Z/n)\to \BBD^1_{\sh}(V,\drl_n\CC'\dtp\Z/n)$
induced by the restriction maps
$\BBH^1(U,\drl_n\CC'\dtp\Z/n)\to \BBH^1(V,\drl_n\CC'\dtp\Z/n)$.
\inpart the direct limit $\drl_U\BBD^1_{\sh}(U,\drl_n\CC'\dtp\Z/n)$ makes sense and is isomorphic to $\Sha^1(\drl_n C'\dtp\Z/n)$ by construction.
Consequently, we reduce the question to showing that
$\prl_n\BBD^0(U,\CC\dtp\Z/n)\times \BBD^1_{\sh}(U,\drl_n\CC'\dtp\Z/n)\to\QZ$
is a perfect pairing of finite groups.

\vem
We shall need the following compatibility between local duality and Artin--Verdier duality.

\begin{lem}\label{lemma: local duality vs AV duality}
Let $U$ be a sufficiently small non-empty open subset of $X_0$.
There is a \cmdm
\beac\label{diagram: local duality vs AV duality}
\xm{
\tstopslim_{v\in X\uun} \BBH\ui(K_v,C\dtp\Z/n)\ar[d] \ar@{}[r]|-{\bigtimes} &
\tstprodlim_{v\in X\uun}\BBH^1(K_v,C'\dtp\Z/n)\ar[r] & \QZ\ar@{=}[d]\\
\BBH^0_c(U,\CC\dtp\Z/n) \ar@{}[r]|-{\bigtimes} &
\BBH^1(U,\CC'\dtp\Z/n)\ar[u]\ar[r] & \QZ
}
\eeac
where the left vertical arrow is constructed analogous to the first arrow of $(\ref{sequence: key ex-seq in deg 1})$,
and the middle one is the composition
$\BBH^1(U,\CC'\dtp\Z/n)\to \BBH^1(K,C'\dtp\Z/n)\to \prod\BBH^1(K_v,C'\dtp\Z/n)$.
\end{lem}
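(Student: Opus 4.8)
The plan is to read off the compatibility from the observation that \emph{both} pairings in the diagram are obtained from the single cup-product pairing $(\CC\dtp\Z/n)\dtp(\CC'\dtp\Z/n)\to \Z(2)[4]$ over $X_0$ (and its restrictions to the various $K_v$, which is exactly the pairing underlying \cref{duality: local-finite level}), followed by the appropriate trace map. Concretely, the \ArtVer pairing lands in $\BBH^1_c(U,\Z(2)[4])=\BBH^5_c(U,\Z(2))\to\QZ$, while the local pairing at $v$ lands in $\BBH^0(K_v,\Z(2)[4])=\BBH^4(K_v,\Z(2))\to\QZ$; once this is set up, the diagram collapses to a statement about these trace maps. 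I would first recall the construction of the left vertical arrow: given $\al\in\tstopslim_v\BBH\ui(K_v,C\dtp\Z/n)$, pick a non-empty open $V\subset U$ so small that $\al\in\tstopslim_{v\notin V}\BBH\ui(K_v,C\dtp\Z/n)$, apply the connecting map of the exact sequence of \cref{sequence: long seq cpt supp and 3 arrows lemma}(3) to obtain an element of $\BBH^0_c(V,\CC\dtp\Z/n)$, then push forward to $\BBH^0_c(U,\CC\dtp\Z/n)$ by covariant \hzx of compact support cohomology; this is precisely the first arrow of (\ref{sequence: key ex-seq in deg 1}) in finite level. For $b\in\BBH^1(U,\CC'\dtp\Z/n)$ write $b_V$ and $b_v$ for its images in $\BBH^1(V,\CC'\dtp\Z/n)$ and $\BBH^1(K_v,C'\dtp\Z/n)$; since restriction $U\to V\to K_v$ agrees with $U\to K\to K_v$, the middle vertical arrow sends $b$ to $(b_v)_v$.

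Next I would carry out two reductions. The first uses the compatibility of the $j_{!}$–$j^{*}$ adjunction for the open immersion $j\colon V\hookrightarrow U$ with the \ArtVer pairing of \cref{duality: AV-finite level} (the same projection-formula-type statement that underlies the Three Arrows Lemma): pairing $\lambda(\al)$ against $b$ over $U$ equals pairing the connecting image of $\al$ against $b_V$ over $V$. The second uses that cup product commutes, up to a fixed sign, with the connecting maps of the localization sequences of \cref{sequence: long seq cpt supp and 3 arrows lemma}; applying this after mapping into $\Z(2)[4]$, the cup product of the connecting image of $\al$ with $b_V$ becomes the localization connecting image, in $\BBH^5_c(V,\Z(2))$, of $\sum_{v\notin V}\al_v\cup b_v\in\bigoplus_{v\notin V}\BBH^4(K_v,\Z(2))$. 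Since $\al_v=0$ for $v\in V$, this is $\sum_v\al_v\cup b_v$.

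At this point the lemma is reduced to the assertion that the \ArtVer trace $\BBH^5_c(V,\Z(2))\to\QZ$, precomposed with the localization connecting map $\bigoplus_{v\notin V}\BBH^4(K_v,\Z(2))\to\BBH^5_c(V,\Z(2))$, coincides (up to the same sign) with the sum of the local invariant maps $\BBH^4(K_v,\Z(2))\to\QZ$; combined with the fact that the local duality pairing of \cref{duality: local-finite level} is cup product followed by the local invariant, this gives $\langle\lambda(\al),b\rangle_{\mathrm{AV}}=\sum_v\langle\al_v,b_v\rangle_{\mathrm{loc}}$, which is the desired commutativity. This trace compatibility is the analogue over the curve $X_0$ of the corresponding statement in Milne's arithmetic duality and is part of the construction of the pairings in \cite{Diego-these}; I would invoke it, verifying only that the normalisations there match those used here. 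The main obstacle is exactly this bookkeeping: keeping track of the signs coming from the connecting maps, confirming that the connecting map used to define $\lambda$ is literally the one that reappears after cupping with $b_V$ and projecting to $\Z(2)[4]$, and checking that the whole construction is independent of the auxiliary open $V$ — rather than any genuinely new cohomological input.
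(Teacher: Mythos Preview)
Your argument is correct, but it follows a different route from the paper's.

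The paper reduces first to the case $v\notin U$, then reinterprets $\BBH^1(U,\CC'\dtp\Z/n)$ and $\BBH^1(K_v,C'\dtp\Z/n)$ as $\ext^1$ groups, so that a class $b$ becomes a morphism $\al_U\colon \CC\dtp\Z/n\to\munots{2}[1]$ in the derived category over $U$ (with restriction $\al_v$ over $K_v$). It then identifies $\BBH\ui(K_v,C\dtp\Z/n)\simeq\BBH^0_v(\CO_v,j_{v!}(\CC\dtp\Z/n))$ via excision and cohomology with support, and the commutativity of (\ref{diagram: local duality vs AV duality}) becomes the evident functoriality of the push-forwards $(\al_U)_*$, $(\al_v)_*$ applied to the diagram $\BBH^0_v(\CO_v,j_{v!}(-))\simeq\BBH^0_v(X,j_{U!}(-))\to\BBH^0(X,j_{U!}(-))$. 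No explicit trace compatibility or sign bookkeeping is needed: everything is absorbed into the derived-category formalism, following \cite{CTH15}*{Proposition 4.3(f)}.

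Your approach is more hands-on: you keep the cup product explicit, push it through the connecting maps of the localization sequence of \cref{sequence: long seq cpt supp and 3 arrows lemma}, and reduce to the compatibility of the global trace $\BBH^5_c(V,\Z(2))\to\QZ$ with the sum of local invariants $\bigoplus_{v\notin V}\BBH^4(K_v,\Z(2))\to\QZ$. This is a perfectly valid alternative and has the virtue of isolating precisely the one arithmetic input (the trace compatibility) from the formal manipulations; the price, as you note yourself, is that you must track signs and verify independence of the auxiliary $V$ by hand. The paper's $\ext$-interpretation sidesteps those checks by packaging the cup product as a genuine morphism of complexes.
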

\begin{proof}
The proof is essentially the same as that of \cite{CTH15}*{Proposition 4.3(f)}.
We first observe by the same argument as \emph{loc. cit.} that it will be sufficient to show the commutativity of diagram (\ref{diagram: local duality vs AV duality}) when $v\notin U$.
Recall there are isomorphisms
$H^1(U,\CC'\dtp\Z/n)\simeq \ext^1_U(\CC\dtp\Z/n,\munots{2})$ and
$H^1(K_v,C'\dtp\Z/n)\simeq \ext^1_{K_v}(C\dtp\Z/n,\munots{2})$.
Hence it suffices to show the following diagram
\[
\xm{
\hom_{K_v}(C\dtp\Z/n,\munots{2}[1]) \ar@{}[r]|-{\bigtimes} &
\BBH\ui(K_v,C\dtp\Z/n)\ar[r]\ar[d] & \QZ\ar@{=}[d]\\
\hom_{U}(\CC\dtp\Z/n,\munots{2}[1]) \ar@{}[r]|-{\bigtimes}\ar[u] &
\BBH^0_c(U,\CC\dtp\Z/n)\ar[r] & \QZ,
}
\]
where the $\hom$ are in the sense of respective derived categories.
Take $\al_U\in\hom_U(\CC\dtp\Z/n,\munots{2}[1])$ and let $\al_v$ be its image in $\hom_{K_v}(C\dtp\Z/n,\munots{2}[1])$.
Let $j_U:U\to X$ and $j_v:\e K_v\to \e \CO_v$ be the respective open immersions.
Recall that there is an isomorphism
$\BBH\ui(K_v,C\dtp\Z/n)\simeq\BBH^0_v(\CO_v,j_{v!}(\CC\dtp\Z/n))$.
In view of the \cmdm
\[
\xm{
\BBH^0_v\big(\CO_v,j_{v!}(\CC\dtp\Z/n)\big)\ar[d]_{(\al_v)_*} &
\BBH^0_v\big(X,j_{U!}(\CC\dtp\Z/n)\big)\ar[l]_-{\simeq}\ar[r]\ar[d]_{(\al_U)_*} &
\BBH^0  \big(X,j_{U!}(\CC\dtp\Z/n)\big)\ar[d]_{(\al_U)_*} \\
\BBH^1_v\big(\CO_v,j_{v!}(\munots{2})\big) &
\BBH^1_v\big(X,j_{U!}(\munots{2})\big)\ar[l]^-{\simeq}\ar[r] &
\BBH^1  \big(X,j_{U!}(\munots{2})\big),
}
\]
we conclude that the desired commutativity of diagram (\ref{diagram: local duality vs AV duality}).
\end{proof}

\begin{lem}
There is a \ppair
\[
\prl_n\BBD^0(U,\CC\dtp\Z/n)\times \BBD^1_{\sh}(U,\drl_n\CC'\dtp\Z/n)\to\QZ.
\]
\end{lem}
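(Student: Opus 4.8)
The plan is to build the pairing out of Artin--Verdier duality at each finite level $n$ and then pass to the inverse limit, the delicate point being a comparison, for $U$ small, between the compact-support image $\BBD^1(U,\CC'\dtp\Z/n)=\Im\big(\BBH^1_c(U,\CC'\dtp\Z/n)\to\BBH^1(U,\CC'\dtp\Z/n)\big)$ and the Tate--Shafarevich-type subgroup $\BBD^1_{\sh}(U,\CC'\dtp\Z/n)$.

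First, for each $n$, \cref{duality: AV-finite level} gives a perfect pairing of finite groups $\BBH^0_c(U,\CC\dtp\Z/n)\times\BBH^1(U,\CC'\dtp\Z/n)\to\QZ$, and by \cref{sequence: long seq cpt supp and 3 arrows lemma}(3) (valid in degree $-1$ for $\dtp\Z/n$-coefficients) the kernel of $\BBH^0_c(U,\CC\dtp\Z/n)\to\BBH^0(U,\CC\dtp\Z/n)$ is the image of the connecting map $\partial\colon\bigoplus_{v\notin U}\BBH\ui(K_v,C\dtp\Z/n)\to\BBH^0_c(U,\CC\dtp\Z/n)$, so that $\BBD^0(U,\CC\dtp\Z/n)=\BBH^0_c(U,\CC\dtp\Z/n)/\Im(\partial)$. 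I would then check that the Artin--Verdier pairing descends to a pairing $\BBD^0(U,\CC\dtp\Z/n)\times\BBD^1_{\sh}(U,\CC'\dtp\Z/n)\to\QZ$: by \cref{lemma: local duality vs AV duality} (this is where the hypothesis "$U$ sufficiently small" enters, and $U$ is chosen to meet it) one has $\langle\partial(\beta),\gamma\rangle_{\mathrm{AV}}=\sum_{v\notin U}\langle\beta_v,\gamma|_{K_v}\rangle_{\mathrm{loc}}$, which vanishes when $\gamma\in\BBD^1_{\sh}(U,\CC'\dtp\Z/n)$ because then every localization $\gamma|_{K_v}$ is zero.

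Next I would pin down the finite-level duality. Using the same compatibility \cref{lemma: local duality vs AV duality} together with the perfectness of the local pairings (\cref{duality: local-finite level}), an element $\gamma\in\Im(\partial)^{\perp}$ must satisfy $\gamma|_{K_v}=0$ for every $v\notin U$, and by the localization sequence \cref{sequence: long seq cpt supp and 3 arrows lemma}(3) for $\CC'\dtp\Z/n$ this says exactly $\gamma\in\BBD^1(U,\CC'\dtp\Z/n)$; hence $\BBD^0(U,\CC\dtp\Z/n)$ is perfectly dual to $\BBD^1(U,\CC'\dtp\Z/n)$. Since $\BBD^1_{\sh}(U,\CC'\dtp\Z/n)\subseteq\BBD^1(U,\CC'\dtp\Z/n)$, the pairing of the statement is right-nondegenerate with left kernel the dual of $\BBD^1(U,\CC'\dtp\Z/n)/\BBD^1_{\sh}(U,\CC'\dtp\Z/n)$. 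These finite-level pairings are compatible with the transition maps (compatibility of the Artin--Verdier pairings for varying $n$), so they assemble into $\prl_n\BBD^0(U,\CC\dtp\Z/n)\times\BBD^1_{\sh}(U,\drl_n\CC'\dtp\Z/n)\to\QZ$, using $\drl_n\BBD^1_{\sh}(U,\CC'\dtp\Z/n)=\BBD^1_{\sh}(U,\drl_n\CC'\dtp\Z/n)$; and dualizing the finite-level isomorphism (all groups are finite, so no $\prl^1$-term) gives $\prl_n\BBD^0(U,\CC\dtp\Z/n)\simeq\BBD^1(U,\drl_n\CC'\dtp\Z/n)^D$. It then remains to show $\BBD^1(U,\drl_n\CC'\dtp\Z/n)=\BBD^1_{\sh}(U,\drl_n\CC'\dtp\Z/n)$ for $U$ sufficiently small, which I would prove by a stabilization argument modeled on the proof of \cref{duality: global-finite level}: over the direct limit in $U$ both groups reach $\Sha^1(\drl_nC'\dtp\Z/n)\simeq\Sha^2(C')$, which is finite by \cref{remark: on degrees III Sha C} and \cref{lemma: list of groups II}, so the increasing family $\{\BBD^1_{\sh}(U,\drl_n\CC'\dtp\Z/n)\}_U$ stabilizes, and the Three Arrows Lemma (\cref{sequence: long seq cpt supp and 3 arrows lemma}(4)) shows that below the stabilization threshold any class in $\BBD^1(U,\drl_n\CC'\dtp\Z/n)$ already has trivial localizations at \emph{all} places. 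With this, both sides of the pairing are finite --- isomorphic to $\Sha^2(C')^D$ and $\Sha^2(C')$ respectively --- and the pairing is perfect.

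The main obstacle is precisely this last comparison $\BBD^1=\BBD^1_{\sh}$ for $U$ sufficiently small, together with the bookkeeping for the limits over $n$: one has to check that $\prl_n$ is exact along the (finite) systems in play, so that the limit of the finite-level dualities is again a duality; that $\drl_n$ commutes with the kernel defining $\BBD^1_{\sh}$ in spite of the infinite product over places (which works because, $\gamma$ being defined over $U$, all but finitely many of its localizations are unramified, reducing matters to a finite product up to the stable subgroup); and that the "small $U$" chosen for the stabilization is compatible with the one required by \cref{lemma: local duality vs AV duality}. These are the points carried out, in the number-field setting, in \cite{Cyril-these}*{pp.~86-88}, to which I would refer for the routine verifications.
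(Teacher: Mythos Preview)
Your finite-level duality $\BBD^0(U,\CC\dtp\Z/n)\simeq\BBD^1(U,\CC'\dtp\Z/n)^D$ is correct, but the passage to $\BBD^1_{\sh}$ has a genuine gap. The stabilization argument you propose does not work: the family $\{\BBD^1(U,\drl_n\CC'\dtp\Z/n)\}_U$ carries no natural direct-system structure (for $V\subset U$ a class in $\BBD^1(U)$ restricts to a class whose localizations at $v\in U\setminus V$ are merely unramified, not zero, so it need not land in $\BBD^1(V)$), so ``over the direct limit in $U$ both groups reach $\Sha^2(C')$'' is not meaningful for $\BBD^1$. The Three Arrows Lemma only relates $\BBH^i_c(V)\to\BBH^i_c(U)$ with $\BBH^i(U)\to\BBH^i(V)$; it gives no control over localizations at $v\in U$, which is exactly what separates $\BBD^1$ from $\BBD^1_{\sh}$. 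Finally, for a \emph{fixed} $U$ neither side of the pairing need be finite or equal to $\Sha^2(C')$: that identification occurs only after the further limit over $U$ in the proof of \cref{duality: global 0 2 hard one}.

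The paper bypasses the comparison $\BBD^1=\BBD^1_{\sh}$ altogether. Instead of the localization sequence with places $v\notin U$, it dualizes the key exact sequence of \cref{proposition: key ex-seq in deg -1 to 1 with finite coeff} (which involves \emph{all} $v\in X\uun$) to obtain directly
\[
0\to\drl_n\BBD^0_K(U,\CC\dtp\Z/n)^D\to\drl_n\BBH^0_c(U,\CC\dtp\Z/n)^D\to\drl_n\big(\tstopslim_{v\in X\uun}\BBH\ui(K_v,C\dtp\Z/n)\big)^D,
\]
and a single diagram chase against the defining sequence of $\BBD^1_{\sh}(U,\drl_n\CC'\dtp\Z/n)$ (the middle vertical map an isomorphism by Artin--Verdier, the right one injective) gives $\BBD^1_{\sh}\simeq\drl_n\BBD^0_K^D$. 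The last step is the identification $\prl_n\BBD^0(U,\CC\dtp\Z/n)\simeq\prl_n\BBD^0_K(U,\CC\dtp\Z/n)$, and this is where the standing hypothesis $\ker\rho$ finite enters, via the injectivity of $\prl_n\BBH^0(U,\CC\dtp\Z/n)\to\prl_n\BBH^0(K,C\dtp\Z/n)$ established in the preceding lemmas. Your route can in fact be salvaged by this same input: since $\BBD^0_K$ is the quotient of $\BBD^0$ by a subgroup of $\ker\big(\BBH^0(U,\CC\dtp\Z/n)\to\BBH^0(K,C\dtp\Z/n)\big)$, the injectivity after $\prl_n$ gives $\prl_n\BBD^0\simeq\prl_n\BBD^0_K$, and dualizing yields $\drl_n\BBD^1=\drl_n\BBD^1_{\sh}$ for \emph{every} $U\subset X_0$. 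But this is not a stabilization-in-$U$ argument, and it relies on the finiteness of $\ker\rho$, which your sketch never invokes.
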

\begin{proof}
We consider the following diagram with exact rows
\beac\label{diagram: D-0 and D-1-sh duality}
\xm{
0\ar[r] & \BBD^1_{\sh}(U,\drl_n\CC'\dtp\Z/n)\ar[r] & \BBH^1(U,\drl_n\CC'\dtp\Z/n)\ar[r]\ar[d]_{\simeq} &
\prod\drl_n H^1(K_v,C'\dtp\Z/n) \\
0\ar[r] & \drl_n\big(\BBD^0(U,\CC\dtp\Z/n)\big)^D\ar[r] & \drl_n\big(\BBH_c^0(U,\CC\dtp\Z/n)\big)^D\ar[r] &
\drl_n\big(\bigoplus \BBH\ui(K_v,C\dtp\Z/n)\big)^D\ar[u]
}
\eeac
where the lower row is exact by \cref{proposition: key ex-seq in deg -1 to 1 with finite coeff}
(more precisely, here we identify $\prl_n\BBD^0(U,\CC\dtp\Z/n)$ with its image $\prl_n\BBD^0_K(U,\CC\dtp\Z/n)$ in $\prl_n\BBH^0(K,C\dtp\Z/n)$).
Note that the right vertical arrow in diagram (\ref{diagram: D-0 and D-1-sh duality}) reads as
\[
\drl_n\tstprodlim_v \BBH\ui(K_v,C\dtp\Z/n)^D
\simeq
\drl\limits_n\tstprodlim_v \BBH^1(K_v,C'\dtp\Z/n)
\to
\tstprodlim_v \drl\limits_n\BBH^1(K_v,C'\dtp\Z/n),
\]
and hence it is injective.
The square in diagram (\ref{diagram: D-0 and D-1-sh duality}) commutes by \cref{lemma: local duality vs AV duality}.
Consequently, there is an induced \ttai
\[\drl_n\big(\BBD^0(U,\CC\dtp\Z/n)\big)^D\to \BBD^1_{\sh}(U,\drl_n\CC'\dtp\Z/n)\]
which is actually an isomorphism by diagram chasing.
\end{proof}

\section{Poitou--Tate sequences}
We begin with the topologies on $\BBH^i(K,C)$ and $\BBP^i(K,C)$.
\bitem
\item
For each $i$, the groups $\BBH^i(K,C)$ are endowed with discrete topology.
The groups $\BBH^i(K,C)_{\wedge}$ are endowed with the subspace topology of the product $\prod_{n}\BBH^i(K,C)/n$.
Its topology is not profinite since each component $\BBH^i(K,C)/n$ is not necessary a finite group in general.
\item
For $i=-1,0$,
we give the group $\BBP^i(K,C)_{\wedge}$ induced topology as a subgroup of $\prod \BBH^i(K_v,C)$.
\item
For $i=1,2$,
The group $\BBP^i(K,C)_{\tors}$ is endowed with the direct limit topology.
More precisely, ${_n}\BBP^i(K,C)$ is equipped with the restricted product topology with respect to the discrete topology on each ${_n}\BBH^i(K_v,C)$, and their direct limit $\BBP^i(K,C)_{\tors}$ is equipped with the corresponding direct limit topology.
\eitem

Now we arrive the main result of this paper.

\begin{thm}\label{theorem: PT for complex of tori}
Suppose either $\ker\rho$ is finite or $\cok\rho$ is trivial.
Then there is a $12$-term \ttes of topological abelian groups
\[
\xm@C=20pt @R=12pt{
0\ar[r] & \BBH\ui(K,C)_{\wedge}\ar[r] & \BBP\ui(K,C)_{\wedge}\ar[r] & \BBH^2(K,C')^D
\ar@{->} `r/5pt[d] `/8pt[l] `^dl/8pt[lll] `^r/8pt[dl][dll]\\
& \BBH^0(K,C)_{\wedge}\ar[r] & \BBP^0(K,C)_{\wedge}\ar[r] & \BBH^1(K,C')^D
\ar@{->} `r/5pt[d] `/8pt[l] `^dl/8pt[lll] `^r/8pt[dl] [dll]\\
& \BBH^1(K,C)\ar[r] & \BBP^1(K,C)_{\tors}\ar[r] & \big(\BBH^0(K,C')_{\wedge}\big)^D
\ar@{->} `r/5pt[d] `/8pt[l] `^dl/8pt[lll] `^r/9pt[dl] [dll]\\
& \BBH^2(K,C)\ar[r] & \BBP^2(K,C)_{\tors}\ar[r] & \big(\BBH\ui(K,C')_{\wedge}\big)^D\ar[r] & 0
}
\]
%
\end{thm}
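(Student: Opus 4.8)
The plan is to assemble the twelve--term sequence from the finite--level fifteen--term sequences of \cref{sequence: finite PT seq 15-term} by passing to the limit over $n$, and to splice the resulting four rows by means of the global \TateSha dualities, following the pattern of \cite{HSSz15} for tori and of \cite{Dem11} and \cite{Cyril-these} for complexes. First I would reduce to the case $\ker\rho$ finite: by \cref{remark: the symmetry of finite kernel and surjectivity} the hypothesis ``$\cok\rho$ trivial'' on $C$ is precisely ``$\ker\rho'$ finite'' on $C'$, and the case $\cok\rho$ trivial then follows by applying the result to $C'$ and taking Pontryagin duals, using the termwise local duality $\BBH^i(K_v,C)\times\BBH^{1-i}(K_v,C')\to\QZ$ together with the restricted--product descriptions of \cref{injectivity: Pi mod n into product of Hi mod n for -1 and 0} to identify the $C'$--terms of the dualized sequence with the $C$--terms of the desired one. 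I would also dispose of the first and the last row at the outset: these hold for arbitrary $C$, their non--obvious terms arising in the limit from the degree $-1$ and degree $2$ rows of \cref{sequence: finite PT seq 15-term}, whose exactness (together with the injectivity, resp.\ surjectivity, at the two extreme ends) was proved there with no restriction, and whose limit groups are identified using only $\BBH^i(K,C)=\BBH^i(K,C')=0$ for $i\notin\{-1,0,1,2\}$ and the compatibility of $R\Gamma(K,-)$ with $\dtp\Q$.

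Assume now $\ker\rho$ finite. By \cref{remark: the symmetry of finite kernel and surjectivity} the groups $\BBH^{-1}(K,C)$, $\BBH^1(K,C)$, $\BBH^0(K,C')$ and $\BBH^2(K,C')$ are then torsion of finite exponent, and this boundedness is the engine of the construction. Using the Kummer triangles $C\to C\to C\dtp\Z/n\to C[1]$ and its $C'$--analogue, one writes each of the twelve terms as a limit over $n$ of finite--level cohomology groups occurring in \cref{sequence: finite PT seq 15-term}: inverse limits for the ``$\wedge$''--terms, direct limits for the ``$\tors$''--terms. The finite exponent of the four groups above forces the Tate--module errors $\prl_n{}_n\BBH^{\bullet}(K,-)$ and the relevant $\prl^1$--terms to vanish, so that on the $C$--side $\prl_n\BBH^i(K,C\dtp\Z/n)\simeq\BBH^i(K,C)_{\wedge}$ for $i=-1,0$ — for $i=0$ this is \cref{lemma: an interpretation of Sha wedge} applied with the full group $\BBH^0$ in place of $\Sha^0$, the finiteness of $\Sha^1(C)$ from \cite{cat19WA} killing the discrepancy — and $\prl_n\BBP^i(K,C\dtp\Z/n)\simeq\BBP^i(K,C)_{\wedge}$ by \cref{injectivity: Pi mod n into product of Hi mod n for -1 and 0}, while the direct--limit computations produce the terms $\BBH^1(K,C)$, $\BBH^2(K,C)$, $\BBP^1(K,C)_{\tors}$, $\BBP^2(K,C)_{\tors}$ of the bottom two rows. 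On the $C'$--side, the $\dtp\Q$--compatibility together with the boundedness of $\BBH^0(K,C')$ and $\BBH^2(K,C')$ converts the dual terms, after passage to the limit, into $\BBH^2(K,C')^D$, $\BBH^1(K,C')^D$, $(\BBH^0(K,C')_{\wedge})^D$ and $(\BBH^{-1}(K,C')_{\wedge})^D$ respectively. The four resulting three--term sequences are exact, by the exactness of \cref{sequence: finite PT sequence row 3 4 5} and the Mittag--Leffler--type stability just invoked.

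It remains to produce the three corner arrows and to check exactness there and at the six interior terms. Each corner arrow is the limit of the corresponding connecting arrow in \cref{sequence: finite PT seq 15-term} and factors as $\BBH^{j}(K,C')^D\twoheadrightarrow\Sha^{j}(C')^D\isoto\Sha^{2-j}(C)\hookrightarrow\BBH^{2-j}(K,C)$ for $j=2,1,0$, with $\Sha^0_{\wedge}(C)$ and $\BBH^0(K,C)_{\wedge}$ in the roles of $\Sha^0(C)$ and $\BBH^0(K,C)$ when $j=2$. The middle isomorphism is supplied by \cref{duality: global 0 2 hard one} for $j=2$ — this is exactly where the finiteness of $\ker\rho$ is indispensable for the sequence to close up — by \cite{cat19WA}*{Theorem 1.17} for $j=1$ (via $\Sha^1(C)\simeq\Sha^1(C')^D$), and by \cref{duality: global 0 2 easy one} for $j=0$. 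Exactness at the corner terms and at the interior terms then follows by a diagram chase from the exactness of the four three--term rows, exactly as in \cite{Dem11} and \cite{Cyril-these}. Finally, the topological statements are routine once these descriptions are in hand: all finite--level groups are discrete (the $\BBH^i(K_v,C\dtp\Z/n)$ even finite), and $\prl_n$, resp.\ $\drl_n$, of the restricted products involved carries precisely the subspace--of--a--product, resp.\ direct--limit, topology fixed before the statement.

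The main obstacle is the limit bookkeeping of the two middle rows: one must check that the inverse limits over $n$ remain exact (vanishing of the relevant $\prl^1$) and that the error terms separating $\prl_n\BBH^0(K,C\dtp\Z/n)$ from $\BBH^0(K,C)_{\wedge}$ — and their $C'$--counterparts — are zero. This is precisely what the finiteness of $\ker\rho$ buys, through the finite exponent of $\BBH^{-1}(K,C)$, $\BBH^1(K,C)$, $\BBH^0(K,C')$ and $\BBH^2(K,C')$, together with the finiteness of $\Sha^1(C)$ from \cite{cat19WA}; without the hypothesis the middle two rows genuinely fail to be exact.
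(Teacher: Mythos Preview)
Your overall architecture matches the paper's: rows $1$ and $4$ hold for arbitrary $C$, rows $2$ and $3$ need the hypothesis, and the three corners are glued by the $\Sha$--dualities. But two steps are not justified as written.

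First, the heart of the argument --- exactness of the inverse--limit sequence
\[
\prl_n\BBH^0(K,C\dtp\Z/n)\to\prl_n\BBP^0(K,C\dtp\Z/n)\to\BBH^0\big(K,\drl_nC'\dtp\Z/n\big)^D
\]
--- is not a consequence of ``Mittag--Leffler--type stability''. The image of $\BBH^0(K,C\dtp\Z/n)$ in $\BBP^0(K,C\dtp\Z/n)$ is in general infinite (already $\BBH^0(K,C)/n$ is), so there is no a~priori reason for its $\prl^1$ to vanish, and the finite exponent of $\BBH^{\pm1}(K,C)$ and $\BBH^{0,2}(K,C')$ does not touch this $\prl^1$. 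The paper proves this exactness (\cref{lemma: exactness of proj lim in finite level for row 2 and 3}) by a genuine d\'evissage along the triangle ${_n}M[2]\to C\dtp\Z/n\to T_{\Z/n}(C)[1]\to{_n}M[3]$, reducing to the Poitou--Tate sequences for the finite groups ${_n}M$ and $T_{\Z/n}(C)$ and then controlling the obstruction by an explicit $\prl^1$ computation (\cref{lemma: inj of derived proj lim}) which again uses the finiteness of $M$ in an essential way. Your proposal does not mention $T_{\Z/n}(C)$ at all, and without that d\'evissage the middle--row exactness is a gap.

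Second, your reduction of the case $\cok\rho=0$ to the case $\ker\rho$ finite by ``applying the result to $C'$ and taking Pontryagin duals'' is not what the paper does and is delicate to justify: several of the twelve groups are neither discrete nor profinite (e.g.\ $\BBP^0(K,C)_\wedge$, $\BBH^0(K,C)_\wedge$), so Pontryagin duality need not return the sequence you want, and one must also check that the dual of $\big(\BBH^0(K,C')_\wedge\big)^D$ really is $\BBH^0(K,C')_\wedge$. The paper instead treats the surjective case directly (\cref{result: PT seq row 2 and 3 surjective case}), where the finite exponent of $\BBH^0(K,C)$ and $\BBH^1(K,C')$ makes the limit arguments elementary. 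If you want to keep the duality reduction, you owe a careful check that each of the twelve terms is reflexive for the topologies in play and that the maps are strict.
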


The proof of the theorem consists of several steps.
We first establish perfect pairings between the restricted topological products for any short complex $C$.
Subsequently, we deduce the exactness of the first and the last rows again for any $C$.
Finally, we deal with the more complicated exact sequence in the middle of the diagram
with either $\ker\rho$ being finite or $\cok\rho$ being trivial.
\vem

\noindent\textbf{Step 1: dualities between restricted topological products}.

\vem
We proceed as in the finite level to obtain pairings between $\BBP^i(K,C)_{\wedge}$ and $\BBP^{1-i}(K,C')_{\tors}$ for $i=-1,0$.
Recall \cref{injectivity: Pi mod n into product of Hi mod n for -1 and 0} that
$\BBH^i(\CO_v,\CC)/n\to \BBH^i(K_v,C)/n$ is injective for each $v\in X_0\uun$.
Therefore we are allowed to identify $\BBH^i(\CO_v,\CC)_{\wedge}$ with a subgroup of $\BBH^i(K_v,C)_{\wedge}$ for $v\in X_0\uun$ by the left exactness of inverse limits.
In this step, all the conclusions are valid without any assumption on $\ker\rho$ and $\cok\rho$.

\begin{prop}\label{duality: annihilators of local pairing}
For $i=-1,0$, the annihilator of $\BBH^{1-i}(\CO_v,\CC')$ is $\BBH^i(\CO_v,\CC)_{\wedge}$ under the \ppair
\[
\BBH^i(K_v,C)_{\wedge}\times \BBH^{1-i}(K_v,C')\to \QZ.
\]
\end{prop}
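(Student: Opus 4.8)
The plan is to reduce the assertion to the finite‑level annihilator result \cref{duality: annihilators of local pairing in finite level} and then pass to the limit over $n$, using that the pairing in the statement is, by construction, the inverse limit of the mod‑$n$ local pairings of \cref{duality: local-finite level}. Fix $v$ and $i\in\{-1,0\}$, and write $P_n$ for the perfect pairing $\BBH^i(K_v,C\dtp\Z/n)\times\BBH^{-i}(K_v,C'\dtp\Z/n)\to\QZ$; by \cref{lemma: inj of cohomology from Ov to Kv for cpx of tori in finite level} we regard $\BBH^i(\CO_v,\CC\dtp\Z/n)$ as a subgroup of $\BBH^i(K_v,C\dtp\Z/n)$, and by \cref{duality: annihilators of local pairing in finite level} its $P_n$‑annihilator is $\BBH^{-i}(\CO_v,\CC'\dtp\Z/n)$.

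First I would compare $P_n$ with the perfect local pairing for $C$ itself. Since the mod‑$n$ pairing is obtained from the pairing on $C\dtp C'$ together with $\Z/n\dtp\Z/n\to\Z[1]$, the Bockstein compatibility of the Kummer triangles $C\to C\to C\dtp\Z/n\to C[1]$ (and the analogue for $C'$) with these cup products shows that, inside $P_n$, the "sub'' pieces $\BBH^i(K_v,C)/n$ and $\BBH^{-i}(K_v,C')/n$ annihilate each other and each pairs perfectly with the complementary "quotient'' piece; in particular $(\BBH^i(K_v,C)/n)^{\perp}=\BBH^{-i}(K_v,C')/n$. Next I would record the set‑theoretic identity
\[
\BBH^i(\CO_v,\CC)/n=\BBH^i(\CO_v,\CC\dtp\Z/n)\cap\BBH^i(K_v,C)/n
\]
inside $\BBH^i(K_v,C\dtp\Z/n)$, which follows from naturality of the Kummer sequences together with injectivity of $\BBH^{i+1}(\CO_v,\CC)\to\BBH^{i+1}(K_v,C)$ (\cref{injectivity: cohohomology of O into K for -1 0 1 2}, in degrees $i+1\in\{0,1\}$) and of $\BBH^i(\CO_v,\CC)/n\to\BBH^i(K_v,C)/n$ (\cref{injectivity: Pi mod n into product of Hi mod n for -1 and 0}). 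Feeding these into $P_n$ and using the standard identity $(A_1\cap A_2)^{\perp}=A_1^{\perp}+A_2^{\perp}$ for a perfect pairing of finite abelian groups gives
\[
(\BBH^i(\CO_v,\CC)/n)^{\perp}=\BBH^{-i}(\CO_v,\CC'\dtp\Z/n)+\BBH^{-i}(K_v,C')/n .
\]
Passing to the Kummer quotient ${}_n\BBH^{1-i}(K_v,C')$ of $\BBH^{-i}(K_v,C'\dtp\Z/n)$, and identifying the image of $\BBH^{-i}(\CO_v,\CC'\dtp\Z/n)$ with ${}_n\BBH^{1-i}(\CO_v,\CC')$ (using the injectivity $\BBH^{1-i}(\CO_v,\CC')\hookrightarrow\BBH^{1-i}(K_v,C')$ of \cref{injectivity: cohohomology of O into K for -1 0 1 2}), I conclude that $P_n$ descends to a perfect pairing
\[
\BBH^i(\CO_v,\CC)/n\ \times\ \Big({}_n\BBH^{1-i}(K_v,C')\big/{}_n\BBH^{1-i}(\CO_v,\CC')\Big)\ \longrightarrow\ \QZ .
\]

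Now I would take $\prl_n$ on the left and $\drl_n$ on the right. All the groups are finite, the transition maps are surjective (resp. injective), so both limits are exact, the limit pairing is again perfect, and it coincides with the pairing of the statement by the way the latter is built. The left‑hand limit is $\BBH^i(\CO_v,\CC)_{\wedge}$ by definition; the right‑hand limit is $\BBH^{1-i}(K_v,C')/\BBH^{1-i}(\CO_v,\CC')$, because for $1-i\in\{1,2\}$ both $\BBH^{1-i}(K_v,C')$ and $\BBH^{1-i}(\CO_v,\CC')$ are torsion (use the triangle $T_2'\to T_1'\to C'\to T_2'[1]$ and $H^3(L,\mathrm{torus})=0$, cf. \cref{lemma: list of groups II}). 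Thus $\BBH^i(\CO_v,\CC)_{\wedge}$ annihilates $\BBH^{1-i}(\CO_v,\CC')$ and maps isomorphically onto $\big(\BBH^{1-i}(K_v,C')/\BBH^{1-i}(\CO_v,\CC')\big)^{D}$; comparing this with the isomorphism $\BBH^{1-i}(\CO_v,\CC')^{\perp}\xrightarrow{\ \sim\ }\big(\BBH^{1-i}(K_v,C')/\BBH^{1-i}(\CO_v,\CC')\big)^{D}$ furnished by perfectness of the pairing in the statement forces $\BBH^i(\CO_v,\CC)_{\wedge}=\BBH^{1-i}(\CO_v,\CC')^{\perp}$, which is the claim.

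The main obstacle I anticipate is the bookkeeping at the limit: one must verify that the pairings $P_n$ assemble into a coherent inverse/direct system whose limit is exactly the pairing $\BBH^i(K_v,C)_{\wedge}\times\BBH^{1-i}(K_v,C')\to\QZ$ of the statement, and that all three families of Kummer exact sequences (over $\CO_v$, over $K_v$, and their $\QZ$‑duals) are compatible with transition maps and with the cup products via the Bockstein $\Z/n\dtp\Z/n\to\Z[1]$. Since every group appearing in the inverse limit is finite there are no $\varprojlim^{1}$‑issues, so once this compatibility is in place the argument is essentially formal; tracking which indices ($i$, $i+1$, $-i$, $1-i$) fall in the stated ranges of \cref{duality: annihilators of local pairing in finite level}, \cref{lemma: inj of cohomology from Ov to Kv for cpx of tori in finite level} and \cref{injectivity: cohohomology of O into K for -1 0 1 2} is routine and, for $i\in\{-1,0\}$, causes no trouble.
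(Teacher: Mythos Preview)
Your argument is correct and follows essentially the same approach as the paper: both reduce to the finite level via the Kummer sequences and combine \cref{duality: annihilators of local pairing in finite level} with the injectivity of $\BBH^{i+1}(\CO_v,\CC)\to\BBH^{i+1}(K_v,C)$ from \cref{injectivity: cohohomology of O into K for -1 0 1 2}. The paper's version is just more direct: rather than proving the intersection identity and invoking $(A_1\cap A_2)^{\perp}=A_1^{\perp}+A_2^{\perp}$, it takes $t\in\BBH^i(K_v,C)/n$ orthogonal to ${_n}\BBH^{1-i}(\CO_v,\CC')$, pushes it into $\BBH^i(K_v,C\dtp\Z/n)$ where \cref{duality: annihilators of local pairing in finite level} forces the image into $\BBH^i(\CO_v,\CC\dtp\Z/n)$, and pulls back along the Kummer diagram to conclude $t\in\BBH^i(\CO_v,\CC)/n$, leaving the passage to the limit implicit.
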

\begin{proof}
We consider the following \cmdm with exact rows for $i=-1,0$
\[\xm{
0\ar[r] & \BBH^i(\CO_v,\CC)/n\ar[r]\ar[d] & \BBH^i(\CO_v,\CC\dtp\Z/n)\ar[r]\ar[d] & \BBH^{i+1}(\CO_v,\CC)\ar[d]\\
0\ar[r] & \BBH^i(K_v,C)/n\ar[r] & \BBH^i(K_v,C\dtp\Z/n)\ar[r] & \BBH^{i+1}(K_v,C).
}\]
Take $t\in \BBH^i(K_v,C)/n$ such that $t$ is orthogonal to ${_n}\BBH^{1-i}(\CO_v,\CC')$.
Then the image $s$ of $t$ in $\BBH^i(K_v,C\dtp\Z/n)$ is orthogonal to $\BBH^{-i}(\CO_v,\CC'\dtp\Z/n)$ and it follows that $s\in \BBH^i(\CO_v,\CC\dtp\Z/n)$ by \cref{duality: annihilators of local pairing in finite level}.
The right vertical arrow is injective by \cref{injectivity: cohohomology of O into K for -1 0 1 2},
thus a diagram chasing shows that $t$ lies in $\BBH^i(\CO_v,\CC)/n$.
\end{proof}

\begin{cor}
For $i=-1,0$, there are isomorphisms
$\BBP^i(K,C)_{\wedge}\simeq \big(\BBP^{1-i}(K,C')_{\tors}\big)^D$ of locally compact groups.
\end{cor}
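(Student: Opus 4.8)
The plan is to obtain the isomorphism in exactly the way the finite-level Corollary following \cref{duality: annihilators of local pairing in finite level} was obtained — perfect local pairings with matching unramified subgroups assemble into a perfect pairing of restricted products — but carried out at each finite level $n$ and then passed to the limit, since by definition $\BBP^i(K,C)_{\wedge}=\prl_n\BBP^i(K,C)/n$ and this is \emph{not} the naive restricted product of the local completions $\BBH^i(K_v,C)_{\wedge}$.

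First I would record the relevant descriptions of the groups in play. For $i=-1,0$, \cref{injectivity: Pi mod n into product of Hi mod n for -1 and 0} says that $\BBP^i(K,C)/n$ is the restricted topological product of the finite groups $\BBH^i(K_v,C)/n$ with respect to the finite subgroups $\BBH^i(\CO_v,\CC)/n$, so $\BBP^i(K,C)_{\wedge}=\prl_n\BBP^i(K,C)/n$; dually, since an element of a restricted product is $n$-torsion iff each of its components is, ${_n}\BBP^{1-i}(K,C')$ is the restricted product of ${_n}\BBH^{1-i}(K_v,C')$ with respect to ${_n}\BBH^{1-i}(\CO_v,\CC')$, whence $\BBP^{1-i}(K,C')_{\tors}=\drl_n{_n}\BBP^{1-i}(K,C')$ with the direct limit topology (as in the conventions opening this section). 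Next I would produce the local input: for each $v$ and $n$ the Kummer triangles $C\to C\to C\dtp\Z/n\to C[1]$ and $C'\to C'\to C'\dtp\Z/n\to C'[1]$ yield short exact sequences $0\to\BBH^i(K_v,C)/n\to\BBH^i(K_v,C\dtp\Z/n)\to{_n}\BBH^{i+1}(K_v,C)\to0$ and $0\to\BBH^{-i}(K_v,C')/n\to\BBH^{-i}(K_v,C'\dtp\Z/n)\to{_n}\BBH^{1-i}(K_v,C')\to0$, and, using the compatibility of the $\Z/n$-coefficient pairing with the $\Z$-coefficient pairing (the coherent family of pairings fixed in the Notations), one checks that under the perfect pairing of \cref{duality: local-finite level} these two sequences are Pontryagin dual to one another. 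This gives a perfect pairing of finite groups $\BBH^i(K_v,C)/n\times{_n}\BBH^{1-i}(K_v,C')\to\QZ$ — the ``mod $n$'' incarnation of the pairing of \cref{duality: annihilators of local pairing} — and, combining \cref{duality: annihilators of local pairing in finite level} with the injectivity statements \cref{injectivity: Pi mod n into product of Hi mod n for -1 and 0} and \cref{injectivity: cohohomology of O into K for -1 0 1 2}, the subgroups $\BBH^i(\CO_v,\CC)/n$ and ${_n}\BBH^{1-i}(\CO_v,\CC')$ are mutual annihilators under it.

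Finally I would assemble these. The standard duality for restricted products, $\big(\prod'_v(A_v;A_v^0)\big)^D\simeq\prod'_v\big(A_v^D;(A_v^0)^\perp\big)$ — the formal fact already underlying the finite-level Corollary — together with the previous paragraph gives topological isomorphisms $\big({_n}\BBP^{1-i}(K,C')\big)^D\simeq\BBP^i(K,C)/n$ compatibly in $n$; taking $\prl_n$ and using that Pontryagin duality turns the directed colimit $\drl_n$ into the limit $\prl_n$ then produces $\big(\BBP^{1-i}(K,C')_{\tors}\big)^D\simeq\prl_n\BBP^i(K,C)/n=\BBP^i(K,C)_{\wedge}$, which is the assertion (and along the way exhibits $\BBP^i(K,C)_{\wedge}$ as the dual of a discrete torsion group, hence as a profinite group). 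I expect the delicate point to be the local step — verifying that the two Kummer exact sequences are genuinely interchanged by the local duality pairing and that the unramified subgroups correspond — rather than the formal limit manipulations, which are routine; this should reduce, exactly as in the proof of \cref{duality: annihilators of local pairing}, to \cref{duality: annihilators of local pairing in finite level} together with the injectivity of unramified cohomology into local cohomology.
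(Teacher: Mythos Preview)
Your proposal is correct and follows essentially the same route as the paper. The paper's one-line proof (``immediate consequence of \cref{injectivity: Pi mod n into product of Hi mod n for -1 and 0} and \cref{duality: annihilators of local pairing}'') is exactly your argument in compressed form: the proof of \cref{duality: annihilators of local pairing} already establishes, at each finite level $n$, that $\BBH^i(\CO_v,\CC)/n$ and ${_n}\BBH^{1-i}(\CO_v,\CC')$ are mutual annihilators under the perfect pairing $\BBH^i(K_v,C)/n\times{_n}\BBH^{1-i}(K_v,C')\to\QZ$, and \cref{injectivity: Pi mod n into product of Hi mod n for -1 and 0} identifies $\BBP^i(K,C)/n$ as the corresponding restricted product, so the restricted-product duality plus passage to the limit over $n$ gives the result. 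Your explicit unpacking of the limit step --- noting that $\BBP^i(K,C)_{\wedge}$ is not a priori the restricted product of the local completions and must instead be handled level by level --- is a fair point that the paper leaves implicit.
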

\begin{proof}
This is an immediate consequence of \cref{injectivity: Pi mod n into product of Hi mod n for -1 and 0} and
\cref{duality: annihilators of local pairing}.
\end{proof}

\vspace{0.5em}
\noindent\textbf{Step 2: exactness of the first and the last rows of diagram (\ref{diagram: 12-term PT for short complex})}.
\vspace{0.5em}

In this step, all the conclusions are valid without any assumption on $\ker\rho$ and $\cok\rho$.

\begin{prop}\label{result: PT seq row 4}
There is an \ttes of locally compact groups
\be\label{sequence: PT seq last row}
0\to \Sha^2(C)\to \BBH^2(K,C)\to \BBP^2(K,C)_{\tors}\to \big(\BBH\ui(K,C')_{\wedge}\big)^D\to 0.
\ee
\end{prop}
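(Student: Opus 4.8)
The plan is to argue in three steps: the left‑hand exactness, the construction of the connecting map $g$ together with the relation $g\circ f=0$, and finally the exactness at $\BBP^2(K,C)_{\tors}$ with surjectivity of $g$. For the first step, by \cref{lemma: list of groups II}\,(2) the group $\BBH^2(K,C)=\drl_U\BBH^2(U,\CC)$ is torsion, and every class comes from some $\BBH^2(U,\CC)$ with $U\subset X_0$; hence for $v\in U$ its localisation lies in the image of the injection $\BBH^2(\CO_v,\CC)\hookrightarrow\BBH^2(K_v,C)$ of \cref{injectivity: cohohomology of O into K for -1 0 1 2}, so the localisation map $f$ lands in $\BBP^2(K,C)$, indeed in $\BBP^2(K,C)_{\tors}$, with kernel $\Sha^2(C)$ by definition. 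Recall $\Sha^2(C)$ is finite (shown above), and $f$ is continuous since $\BBH^2(K,C)$ is discrete.

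For the second step I would build $g\colon\BBP^2(K,C)_{\tors}\to\big(\BBH\ui(K,C')_{\wedge}\big)^D$ from the local pairings $\BBH^2(K_v,C)\times\BBH\ui(K_v,C')\to\QZ$ of the Pairings paragraph: for $a=(a_v)$ with $Na=0$ and $\beta\in\BBH\ui(K,C')$ put $g(a)(\beta)=\sum_v\langle a_v,\beta_v\rangle$. This sum is finite since at almost all $v$ both $a_v\in\BBH^2(\CO_v,\CC)$ and $\beta_v\in\BBH\ui(\CO_v,\CC')$, and there the terms vanish by the annihilator statement \cref{duality: annihilators of local pairing} (applied to $C'$); as $Na=0$, $g(a)$ kills $N\cdot\BBH\ui(K,C')$, hence factors through $\BBH\ui(K,C')_{\wedge}\to\BBH\ui(K,C')/N$ and is a continuous character. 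The relation $g\circ f=0$ is the reciprocity law: for global $\al,\beta$ extended to a common $U$, the terms at $v\in U$ vanish as above, while $\sum_{v\notin U}\langle\al_v,\beta_v\rangle$ is computed via the compatibility of the Artin--Verdier pairing on $U$ with the local pairings and the boundary map $\bigoplus_{v\notin U}\BBH\ui(K_v^h,C')\to\BBH^0_c(U,\CC')$ of \cref{sequence: long seq cpt supp and 3 arrows lemma}\,(2), together with the vanishing of $\BBH^4(K,\Z(2))\to\bigoplus_v\BBH^4(K_v,\Z(2))\to\QZ$; one may instead quote the reciprocity already established for tori in \cite{HSSz15} and for short complexes in \cite{cat19WA} and reduce by d\'evissage.

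The last step, $\ker g\subset\im f$ and surjectivity of $g$, is the heart of the matter, and I would prove it by reducing to finite level. Both assertions are tested on $n$‑torsion via $\BBP^2(K,C)_{\tors}=\drl_n{}_n\BBP^2(K,C)$ and the Kummer sequences $0\to\BBP^1(K,C)/n\to\BBP^1(K,C\dtp\Z/n)\to{}_n\BBP^2(K,C)\to 0$ and $0\to\BBH^1(K,C)/n\to\BBH^1(K,C\dtp\Z/n)\to{}_n\BBH^2(K,C)\to 0$, whose left injectivity is \cref{injectivity: Pi mod n into product of Hi mod n for -1 and 0}. Under these identifications $f$ at level $n$ is induced by the localisation $\BBH^1(K,C\dtp\Z/n)\to\BBP^1(K,C\dtp\Z/n)$, and $g$ on ${}_n\BBP^2(K,C)$ is induced by the $n$‑level map $\BBP^1(K,C\dtp\Z/n)\to\BBH\ui(K,C'\dtp\Z/n)^D$ of \cref{sequence: finite PT seq 15-term}, restricted to the subgroup $\BBH\ui(K,C')/n\subset\BBH\ui(K,C'\dtp\Z/n)$ (the two off‑diagonal pieces of the $n$‑level form vanish by the standard Bockstein compatibility). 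Exactness of the $15$‑term sequence at $\BBP^1(K,C\dtp\Z/n)$ then gives the claim \emph{provided} one checks that the functionals coming from $\BBP^1(K,C)/n$ account for all functionals in the image of the $n$‑level map that vanish on $\BBH\ui(K,C')/n$, and the companion surjectivity. This is the main obstacle: the gap between $\BBH\ui(K,C')$ and $\BBH\ui(K,C'\dtp\Z/n)$, controlled by the torsion of $\BBH^0(K,C')$, forbids a naive quotation of the finite‑level sequence and forces the completions $(-)_{\wedge}$ and an honest passage to the limit over $n$ — or, mimicking the proof of \cref{duality: global 0 2 easy one}, a limit over open $U$ using compact‑support cohomology and \cref{proposition: key ex-seq in deg -1 to 1 with finite coeff}. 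Lastly I would verify that the four maps are strict morphisms of topological groups, using that each ${}_n\BBP^2(K,C)$ is locally compact with open profinite subgroup $\prod_v{}_n\BBH^2(\CO_v,\CC)$ and that $\Sha^2(C)$ and the finite‑level Tate--Shafarevich groups are finite.
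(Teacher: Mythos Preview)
Your overall strategy --- reduce to finite level via the Kummer sequences and the $15$-term sequence of \cref{sequence: finite PT seq 15-term} --- is exactly the paper's route. You also correctly locate the crux: the gap between $\BBH\ui(K,C')/n$ and $\BBH\ui(K,C'\dtp\Z/n)$, governed by ${_n}\BBH^0(K,C')$. But you stop short of resolving it; you only say it ``forces the completions and an honest passage to the limit'' and gesture at two possible routes. That is the missing step.

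The paper closes this gap with a concrete arithmetic input you do not mention: $\BBH^0(K,C')_{\tors}$ has \emph{finite exponent}. Indeed, $(K\uu)_{\tors}=(k\uu)_{\tors}$ is finite since $k$ is $p$-adic, so by restriction--corestriction $H^0(K,P)_{\tors}$ has finite exponent for any torus $P$; then the triangle $\ker\rho'[1]\to C'\to\cok\rho'\to\ker\rho'[2]$ together with \cref{lemma: list of groups II}(4) gives the claim for $\BBH^0(K,C')_{\tors}$. It follows that $\prl_n{_n}\BBH^0(K,C')=0$, whence the Kummer sequence yields $\BBH\ui(K,C')_{\wedge}\simeq\prl_n\BBH\ui(K,C'\dtp\Z/n)$. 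With this in hand, the paper simply takes the direct limit over $n$ of the last two columns of the $3\times 3$ diagram
\[
\xymatrix@C=12pt{
\BBH^1(K,C)/n\ar@{^{(}->}[r]\ar[d] & \BBH^1(K,C\dtp\Z/n)\ar@{->>}[r]\ar[d] & {_n}\BBH^2(K,C)\ar[d]\\
\BBP^1(K,C)/n\ar@{^{(}->}[r]\ar[d] & \BBP^1(K,C\dtp\Z/n)\ar@{->>}[r]\ar[d] & {_n}\BBP^2(K,C)\ar[d]\\
({_n}\BBH^0(K,C'))^D\ar@{^{(}->}[r] & \BBH\ui(K,C'\dtp\Z/n)^D\ar@{->>}[r] & (\BBH\ui(K,C')/n)^D
}
\]
and uses (i) $\drl_n\BBH^1(K,C\dtp\Z/n)\simeq\BBH^2(K,C)$ since $\BBH^1(K,C)\ots\QZ=0$, (ii) surjectivity of the middle vertical limit map, (iii) the isomorphism just established on the right, and (iv) $\drl_n\BBH^2(K,C\dtp\Z/n)=0$ (from $\BBH^3(K,C)=0$) to get surjectivity on the right. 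A single diagram chase then gives the whole sequence at once; your separate construction of $g$ and ad hoc reciprocity argument for $g\circ f=0$ are unnecessary.
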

\begin{proof}
We consider the following \cmdm with exact rows and exact middle column by \cref{sequence: finite PT seq 15-term}:
\beac\label{diagram: PT seq last row}
\xm{
0\ar[r] & \BBH^1(K,C)/n\ar[r]\ar[d] & \BBH^1(K,C\dtp\Z/n)\ar[r]\ar[d] & {_n}\BBH^2(K,C)\ar[r]\ar[d] & 0 \\
0\ar[r] & \BBP^1(K,C)/n\ar[r]\ar[d] & \BBP^1(K,C\dtp\Z/n)\ar[r]\ar[d] & {_n}\BBP^2(K,C)\ar[r]\ar[d] & 0 \\
0\ar[r] & \big({_n}\BBH^0(K,C')\big)^D\ar[r] & \BBH\ui(K,C'\dtp\Z/n)^D\ar[r] & \big(\BBH\ui(K,C')/n\big)^D\ar[r] & 0.
}
\eeac
Taking direct limit over all $n$ of the last two columns in diagram (\ref{diagram: PT seq last row}) yields the \cmdm
\beac\label{diagram: PT seq last row proof I}
\xm{
\drl_n\BBH^1(K,C\dtp\Z/n)\ar[r]\ar[d] & \drl_n\BBP^1(K,C\dtp\Z/n)\ar[r]\ar[d] &
\big(\prl_n\BBH\ui(K,C'\dtp\Z/n)\big)^D\ar[r]\ar[d] & 0 \\
\BBH^2(K,C)\ar[r] & \BBP^2(K,C)_{\tors}\ar[r] & \big(\BBH\ui(K,C')_{\wedge}\big)^D.
}
\eeac
Since $\BBH^2(K,C)$ is torsion and $\BBH^3(K,C)=0$,
taking direct limit of the \Kumseq
\[
0\to \BBH^2(K,C)/n\to \BBH^2(K,C\dtp\Z/n)\to {_n}\BBH^3(K,C)\to 0.
\]
yields $\drl_n \BBH^2(K,C\dtp\Z/n)=0$.
Taking direct limit in \cref{sequence: finite PT seq 15-term} yields the exactness of the upper row.
The left vertical arrow in (\ref{diagram: PT seq last row proof I}) is an isomorphism since $\BBH^1(K,C)\ots\QZ=0$
and the middle one is surjective by the exactness of $\drl_n$.
If the right vertical arrow is an isomorphism, then a diagram chasing yields the \ttes (\ref{sequence: PT seq last row}).

So it remains to show $\prl_n\BBH\ui(K,C'\dtp\Z/n)\simeq \BBH\ui(K,C')_{\wedge}$.
We see that the vanishing $\prl_n {_n}\BBH^0(K,C')=0$ is enough because of the \Kumseq
\[
0\to \BBH\ui(K,C')/n\to \BBH\ui(K,C'\dtp\Z/n)\to {_n}\BBH^0(K,C')\to 0,
\]
and therefore we reduce to show $\BBH^0(K,C')_{\tors}$ has finite exponent.
Indeed, since $H^0(K,\gm)_{\tors}=(K\uu)_{\tors}=(k\uu)_{\tors}$ is finite,
we see that $H^0(K,P)_{\tors}$ has finite exponent for a $K$-torus $P$
by a restriction-corestriction argument.
The \distri $\ker\rho'[1]\to C'\to \cok\rho'\to \ker\rho'[2]$ yields an \ttes
$0\to H^1(K,\ker\rho')_{\tors}\to \BBH^0(K,C')_{\tors}\to \BBH^0(K,\cok\rho')_{\tors}$.
By \cref{lemma: list of groups II}(4) $H^1(K,\ker\rho')$ has finite exponent,
so is $\BBH^0(K,C')_{\tors}$ by d\'evissage.
\end{proof}

\begin{cor}\label{result: PT seq row 1}
There is an exact sequence of locally compact groups
\[0\to \BBH\ui(K,C)_{\wedge}\to \BBP\ui(K,C)_{\wedge}\to \BBH^2(K,C')^D\to \Sha^2(C')^D\to 0.\]
\end{cor}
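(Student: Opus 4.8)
The strategy is to obtain this sequence by applying Pontryagin duality to \cref{result: PT seq row 4}. First I would apply that proposition to the complex $C'$ in place of $C$ (so that $(C')'=C$), which gives an exact sequence
\[
0\to \Sha^2(C')\to \BBH^2(K,C')\to \BBP^2(K,C')_{\tors}\to \big(\BBH\ui(K,C)_{\wedge}\big)^D\to 0 .
\]
I would then apply the functor $(-)^D=\hom_{\cts}(-,\QZ)$. Splitting the sequence into two short exact sequences and using that this functor is exact on the classes of topological abelian groups occurring (discrete torsion groups, restricted topological products, and their limits), one obtains an exact sequence
\[
0\to \big(\big(\BBH\ui(K,C)_{\wedge}\big)^D\big)^D\to \big(\BBP^2(K,C')_{\tors}\big)^D\to \BBH^2(K,C')^D\to \Sha^2(C')^D\to 0 .
\]

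It remains to identify the first two terms. The isomorphism $\big(\BBP^2(K,C')_{\tors}\big)^D\simeq \BBP\ui(K,C)_{\wedge}$ of locally compact groups is precisely the $i=-1$ instance of the duality between restricted topological products established in Step 1 (the corollary to \cref{duality: annihilators of local pairing}, together with \cref{injectivity: Pi mod n into product of Hi mod n for -1 and 0}, which ensures that the restricted product is computed correctly after reducing mod $n$). For the first term one needs the reflexivity $\big(\big(\BBH\ui(K,C)_{\wedge}\big)^D\big)^D\simeq \BBH\ui(K,C)_{\wedge}$. For this I would use that $\BBH\ui(K,C)_{\wedge}=\prl_n \BBH\ui(K,C)/n$ is the inverse limit of the discrete groups $\BBH\ui(K,C)/n$ along the surjective transition maps $\BBH\ui(K,C)/mn\to \BBH\ui(K,C)/n$, with the subspace topology from $\prod_n \BBH\ui(K,C)/n$; a continuous homomorphism to $\QZ$ has open kernel, hence by the Mittag-Leffler property factors through one of the surjections $\BBH\ui(K,C)_{\wedge}\to \BBH\ui(K,C)/n$, so that $\big(\BBH\ui(K,C)_{\wedge}\big)^D\simeq \drl_n\big(\BBH\ui(K,C)/n\big)^D$, and dualizing once more, using the reflexivity of each discrete group $\BBH\ui(K,C)/n$, returns $\prl_n\BBH\ui(K,C)/n$ with its subspace topology. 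Substituting these two identifications yields the asserted exact sequence
\[
0\to \BBH\ui(K,C)_{\wedge}\to \BBP\ui(K,C)_{\wedge}\to \BBH^2(K,C')^D\to \Sha^2(C')^D\to 0 .
\]

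The genuinely delicate point is topological rather than arithmetic: one must check that $(-)^D$ is exact on the sequence produced by \cref{result: PT seq row 4}, the awkward terms being $\BBP^2(K,C')_{\tors}$ (a direct limit of restricted products of locally compact groups) and $\big(\BBH\ui(K,C)_{\wedge}\big)^D$ (a direct limit of the compact groups $\big(\BBH\ui(K,C)/n\big)^D$), and one must justify the reflexivity of $\BBH\ui(K,C)_{\wedge}$ along the lines sketched above. All the arithmetic input is already contained in \cref{result: PT seq row 4}, in the duality of restricted topological products from Step 1, and in the finiteness statements of the preceding sections.
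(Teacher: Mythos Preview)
Your approach is the same as the paper's: apply \cref{result: PT seq row 4} to $C'$ and dualize. The identification of the first two terms via the Step~1 duality and reflexivity is also what the paper does (though the paper simply asserts that $\BBH^{-1}(K,C)/n$ is finite, which is not true in general; your argument via Pontryagin duality of discrete groups is the correct justification).

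The one point where your proof is incomplete is precisely the one you flag at the end: the exactness of $(-)^D$ on the four-term sequence. The paper does not attempt to dualize the whole sequence at once. It splits the argument: dualizing the surjection $\BBP^2(K,C')_{\tors}\to (\BBH^{-1}(K,C)_{\wedge})^D$ gives exactness at the first three terms, but exactness at $\BBH^2(K,C')^D$ and surjectivity onto $\Sha^2(C')^D$ require knowing that the image $\BI$ of $\BBH^2(K,C')$ in $\BBP^2(K,C')_{\tors}$ is \emph{discrete} in the direct-limit topology, so that the short exact sequence $0\to\Sha^2(C')\to\BBH^2(K,C')\to\BI\to 0$ of discrete groups dualizes cleanly and the dual of the inclusion $\BI\hookrightarrow\BBP^2(K,C')_{\tors}$ is surjective. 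The paper proves this discreteness directly: for $B_n=\prod_{v\notin U}{_n}\BBH^2(K_v,C')\times\prod_{v\in U}{_n}\BBH^2(\CO_v,\CC')$, any element of $\BI\cap B_n$ lifts by \cref{lemma: localization sequence I} to $\BBH^2(U,\CC')$, which is torsion of cofinite type by \cref{lemma: list of groups II}; hence $\BI\cap B_n$ is finite, and $\BI$ is discrete. You should include this argument (or an equivalent one) rather than leaving the topological exactness as an unverified hypothesis.
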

\begin{proof}
Applying \cref{result: PT seq row 4} to $C'$ yields an \ttes
\be\label{sequence: PT seq last row for C'}
\BBH^2(K,C')\to \BBP^2(K,C')_{\tors}\to \big(\BBH\ui(K,C)_{\wedge}\big)^D\to 0.
\ee
Note that for each $n\ge 1$, the double dual of the finite discrete group $\BBH\ui(K,C)/n$ is itself.
It follows that the desired sequence is exact at the first three terms by dualizing the sequence (\ref{sequence: PT seq last row for C'}).
We get the exactness at the last three terms by an argument analogous to that of \cite{HSSz15}*{Theorem 2.9} except that we use \cref{lemma: localization sequence I} instead of \cite{HSSz15}*{Lemma 2.8}.
Let us recall the argument for the convenience of the reader.
Let $\BI\ce \Im\big(\BBH^2(K,C')\to \BBP^2(K,C')_{\tors}\big)$.
For each $n\ge 1$, let $A_n\subset \BBH^2(K,C')$ be the inverse image of
$B_n=\prod_{v\notin U}{_n}\BBH^2(K_v,C')\times\prod_{v\in U}{_n}\BBH^2(\CO_v,\CC')\subset \BBP^2(K,C')_{\tors}$.
By \cref{lemma: localization sequence I}, $A_n\subset \Im\big(\BBH^2(U,\CC')\to \BBH^2(K,C')\big)$.
But $\BBH^2(U,\CC')$ is a torsion group of cofinite type by \cref{lemma: list of groups II},
so is $A_n$ and hence $A_n/n$ is finite.
\inpart the image of $A_n$ in the $n$-torsion group $B_n$ is finite.
Since this image is just $\BI\cap B_n$, we obtain that $\BI$ is discrete in $\BBP^2(K,C')_{\tors}$ by definition of the topology on $\BBP^2(K,C')_{\tors}$.
\end{proof}

\vspace{0.5em}
\noindent\textbf{Step 3.1:
Exactness of the second and the third rows of diagram (\ref{diagram: 12-term PT for short complex}): finite kernel case}
\vem

We will systematically assume that $M\ce\ker\rho$ is finite from \cref{lemma: exactness of proj lim in finite level for row 2 and 3} to \cref{result: PT seq row 3}.

\begin{prop}\label{lemma: exactness of proj lim in finite level for row 2 and 3}
There is an \ttes of locally compact groups
\[\prl_n\BBH^0(K,C\dtp\Z/n)\to \prl_n\BBP^0(K,C\dtp\Z/n)\to\BBH^0(K,\drl_nC'\dtp\Z/n)^D.\]
\end{prop}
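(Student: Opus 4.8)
The plan is to pass to the inverse limit over $n$ (ordered by divisibility) in the finite-level exact sequences of \cref{sequence: finite PT sequence row 3 4 5}, and to repair the loss of exactness caused by $\prl_n$ by a Mittag--Leffler argument. For each $n\ge 1$, \cref{sequence: finite PT sequence row 3 4 5} at $i=0$ gives an exact sequence
\[\BBH^0(K,C\dtp\Z/n)\longrightarrow \BBP^0(K,C\dtp\Z/n)\longrightarrow \BBH^0(K,C'\dtp\Z/n)^D.\]
Using the reduction maps $\Z/n\to\Z/m$ for $m\mid n$ on the first two terms, and the maps $\Z/m\to\Z/n$ (multiplication by $n/m$) defining the direct system $\{\BBH^0(K,C'\dtp\Z/n)\}_n$ that is dualized in the third term, these assemble into an inverse system of three-term exact sequences; the required compatibility of the transition maps with the duality pairings is the compatibility of the cup products $\Z/n\dtp\Z/n\to\Z[1]$ under reduction. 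Since hypercohomology commutes with filtered direct limits and $\hom_{\cts}(-,\QZ)$ sends direct limits to inverse limits, one has $\prl_n\BBH^0(K,C'\dtp\Z/n)^D\simeq\big(\drl_n\BBH^0(K,C'\dtp\Z/n)\big)^D\simeq\BBH^0(K,\drl_nC'\dtp\Z/n)^D$, which is the right-hand term of the asserted sequence.

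Next I would reduce the claim to a surjectivity statement. Put $Z_n\ce\Im\big(\BBH^0(K,C\dtp\Z/n)\to\BBP^0(K,C\dtp\Z/n)\big)$, so that $Z_n=\ker\big(\BBP^0(K,C\dtp\Z/n)\to\BBH^0(K,C'\dtp\Z/n)^D\big)$ by exactness. Left exactness of $\prl_n$ identifies $\prl_nZ_n$ with the kernel of $\prl_n\BBP^0(K,C\dtp\Z/n)\to\BBH^0(K,\drl_nC'\dtp\Z/n)^D$, and the map out of $\prl_n\BBH^0(K,C\dtp\Z/n)$ factors through the inclusion $\prl_nZ_n\hookrightarrow\prl_n\BBP^0(K,C\dtp\Z/n)$. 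Hence exactness in the middle of the desired sequence is equivalent to the surjectivity of $\prl_n\BBH^0(K,C\dtp\Z/n)\to\prl_nZ_n$.

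To prove this surjectivity, note that the restricted product embeds into $\prod_{v}\BBH^0(K_v,C\dtp\Z/n)$, so $\ker\big(\BBH^0(K,C\dtp\Z/n)\to\BBP^0(K,C\dtp\Z/n)\big)=\Sha^0(C\dtp\Z/n)$; thus there is a short exact sequence of inverse systems $0\to\Sha^0(C\dtp\Z/n)\to\BBH^0(K,C\dtp\Z/n)\to Z_n\to 0$ over the divisibility poset, which has a countable cofinal chain. Its associated six-term $\prl$-sequence shows that the cokernel of $\prl_n\BBH^0(K,C\dtp\Z/n)\to\prl_nZ_n$ embeds into $\dprl{n}\Sha^0(C\dtp\Z/n)$. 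By \cref{duality: global-finite level} each $\Sha^0(C\dtp\Z/n)$ is finite, hence $\{\Sha^0(C\dtp\Z/n)\}_n$ satisfies the Mittag--Leffler condition and $\dprl{n}\Sha^0(C\dtp\Z/n)=0$; this yields the surjectivity and therefore the asserted exactness. Finally, all the maps are continuous for the inverse-limit topologies and the local compactness of the terms is inherited from the finite level, so the sequence is one of topological groups.

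The step I expect to be the main obstacle is exactly the vanishing $\dprl{n}\Sha^0(C\dtp\Z/n)=0$: since $\prl_n$ is not right exact, the surjectivity in the second step genuinely requires input, and it is the finiteness of the finite-level Tate--Shafarevich groups (\cref{duality: global-finite level}) that supplies the needed Mittag--Leffler condition. A secondary, purely bookkeeping, subtlety is checking in the first step that the transition maps are compatible with the pairings underlying \cref{sequence: finite PT sequence row 3 4 5}.
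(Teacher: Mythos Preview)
Your argument is correct and is in fact simpler than the paper's. The paper proceeds by d\'evissage along the distinguished triangle ${_n}M[2]\to C\dtp\Z/n\to T_{\Z/n}(C)[1]\to {_n}M[3]$, reducing to the Poitou--Tate sequences for the finite modules ${_n}M$ and $T_{\Z/n}(C)$ from \cite{HSSz15}, and then performs a fairly intricate diagram chase through a $3\times 5$ diagram and its inverse limit; the finiteness of $M=\ker\rho$ enters to make $\prl_n\BBP^2(K,{_n}M)=0$ and, via a separate lemma, to control a $\prl^1$ term. Your route bypasses all of this: you take the inverse limit of the finite-level sequence from \cref{sequence: finite PT sequence row 3 4 5} directly and handle the failure of exactness of $\prl_n$ by the Mittag--Leffler vanishing $\dprl{n}\Sha^0(C\dtp\Z/n)=0$, which is available since each $\Sha^0(C\dtp\Z/n)$ is finite by \cref{duality: global-finite level}. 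This is exactly the mechanism the paper itself uses later in the proof of \cref{result: PT seq row 3} for degree $1$ (via $\prl^1_n\Sha^1(C\dtp\Z/n)=0$), so your argument is fully in the spirit of the paper. A bonus is that your proof never invokes the hypothesis that $\ker\rho$ is finite, so it establishes the proposition for arbitrary $C$.
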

\begin{proof}
The proof is similar to \cite{Dem11}*{Lemme 6.2}.
We consider the following \cmdm
\beac\label{diagram: 3 times 5 for PT seq second row}
\xm@C=12pt{
H^0(K,T_{\Z/n}(C))\ar[r]^-{\Phi^{K}_n}\ar[d] & H^2(K,{_n}M)\ar[r]\ar[d] & \BBH^0(K,C\dtp\Z/n)\ar[r]\ar[d] &
H^1(K,T_{\Z/n}(C))\ar[r]^-{\Psi^{K}_n}\ar[d] & H^3(K,{_n}M)\ar[d] \\
\BBP^0(K,T_{\Z/n}(C))\ar[r]^-{\Phi^{\Pi}_n}\ar[d] & \BBP^2(K,{_n}M)\ar[r]\ar[d] & \BBP^0(K,C\dtp\Z/n)\ar[r]\ar[d] &
\BBP^1(K,T_{\Z/n}(C))\ar[r]^-{\Psi^{\Pi}_n}\ar[d] & \BBP^3(K,{_n}M)\ar[d] \\
H^3(K,T_{\Z/n}(C)')^D\ar[r] & H^1(K,({_n}M)')^D\ar[r] & \BBH^0(K,C'\dtp\Z/n)^D\ar[r] &
H^2(K,T_{\Z/n}(C)')^D\ar[r] & H^0(K,({_n}M)')^D
}
\eeac
where
the upper two rows are exact since they are induced by the \distri ${_n}M[2]\to C\dtp\Z/n\to T_{\Z/n}(C)[1]\to {_n}M[3]$,
and the columns except the middle one are exact by \cite{HSSz15}*{Theorem 2.3}.
The product $\BBP^0(K,T_{\Z/n}(C))=\prod_{v\in X^{(1)}}\BBH^0(K_v,T_{\Z/n}(C))$ is compact
since $H^0(K_v,T_{\Z/n}(C))$ is finite.
Now taking inverse limit of the middle three columns of diagram (\ref{diagram: 3 times 5 for PT seq second row}) over all $n$ yield the following \cmdm
\beac\label{diagram: 3 times 4 for PT seq second row}
\xm@C=14pt{
\prl_n H^2(K,{_n}M)\ar[r]\ar[d] & \prl_n H^0(K,C\dtp\Z/n)\ar[r]\ar[d] &
\prl_n \ker\Psi_n^{K}\ar[r]\ar[d] & \prl_n^1 \cok\Phi_n^{K}\ar[d] \\
\prl_n \BBP^2(K,{_n}M)\ar[r]\ar[d] & \prl_n \BBP^0(K,C\dtp\Z/n)\ar[r]\ar[d] &
\prl_n \ker\Psi_n^{\Pi}\ar[r]\ar[d] & \prl_n^1 \cok\Phi_n^{\Pi}\ar[d] \\
\prl_n H^1(K,({_n}M)')^D\ar[r] & \prl_n H^0(K,C'\dtp\Z/n)^D\ar[r] &
\prl_n H^2(K,T_{\Z/n}(C)')^D\ar[r] & \prl_n^1 H^0(K,{_n}M')^D.
}
\eeac
Moreover, the third column of the above diagram (\ref{diagram: 3 times 4 for PT seq second row}) also fits into the \cmdm
\beac\label{diagram: 2 times 3 for PT seq second row}
\xm{
0\ar[r] & \prl_n \ker\Psi_n^{K}\ar[r]\ar[d] & \prl_n H^1(K,T_{\Z/n}(C))\ar[r]\ar[d] & \prl_n \Im\Psi_n^{K}\ar[d]\\
0\ar[r] & \prl_n \ker\Psi_n^{\Pi}\ar[r] & \prl_n \BBP^1(K,T_{\Z/n}(C))\ar[r]\ar[d] & \prl_n \Im\Psi_n^{\Pi}\\
& & \prl_n H^2(K,T_{\Z/n}(C)')^D
}
\eeac
with $\prl_n \Im\Psi_n^{K}\subset\prl_n H^3(K,{_n}M)=0$ being zero.
Therefore $\prl_n \ker\Psi_n^{K}\to \prl_n H^1(K,T_{\Z/n}(C))$ is surjective.
Note that the middle column is exact because $\prl_n^1$ of the finite groups
$\ker\big(H^1(K,T_{\Z/n}(C))\to \BBP^1(K,T_{\Z/n}(C))\big)$ vanishes according to \cite{Jen72}*{Th\'eor\`eme 7.3}.

Take $\al\in\prl_n\BBP^0(K,C\dtp\Z/n)$ such that it goes to zero in $\prl_n\BBH^0(K,C'\dtp\Z/n)^D$.
By \hzxbk, $\beta\ce\Im\al\in\prl_n\BBP^1(K,T_{\Z/n}(C))$ goes to zero in $\prl_n H^2(K,T_{\Z/n}(C)')^D$.
Thus $\beta$ comes from $\gamma\in\prl_n H^1(K,T_{\Z/n}(C))$ by the exactness of the middle column in diagram (\ref{diagram: 2 times 3 for PT seq second row}).
Since $\prl_n\ker\Psi_n^{K}\to \prl_nH^1(K,T_{\Z/n}(C))$ is surjective,
$\gamma$ comes from some $\gamma'\in\prl_n\ker\Psi_n^{K}$.
But $\prl^1_{n}\cok\Phi_n^{K}\to \prl^1_{n}\cok\Phi_n^{\Pi}$ is injective by \cref{lemma: inj of derived proj lim} below,
so $\gamma'$ comes from $\tau\in\prl_n H^1(K,T_{\Z/n}(C))$.
Since $\al$ and $\Im\tau$ have the same image in $\prl_n\ker\Psi_n^{\Pi}$ by construction,
the commutativity of diagram (\ref{diagram: 3 times 4 for PT seq second row}) implies that $\al$ and $\Im\tau$ differs from an element in $\prl_n\BBP^2(K,{_n}M)$.
Recall that $\ker\rho$ is finite, thus $\prl_n\BBP^2(K,{_n}M)=0$ and $\al$ comes from $\prl_n H^1(K,T_{\Z/n}(C))$.
\end{proof}

\begin{lem}\label{lemma: inj of derived proj lim}
The homomorphism $\prl^1_{n}\cok\Phi_n^{K}\to \prl^1_{n}\cok\Phi_n^{\Pi}$ in diagram $(\ref{diagram: 3 times 4 for PT seq second row})$ is an isomorphism.
\end{lem}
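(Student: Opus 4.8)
The plan is to show that \emph{both} $\prl^1_n\cok\Phi_n^K$ and $\prl^1_n\cok\Phi_n^\Pi$ are zero; the asserted isomorphism is then automatic. The mechanism is exactly the one already used in \cref{lemma: inverse limit of cohlgy of finite group vanishes} and behind the vanishing $\prl_n\BBP^2(K,{_n}M)=0$ invoked in \cref{lemma: exactness of proj lim in finite level for row 2 and 3}: since $M=\ker\rho$ is finite, all of these inverse systems are pro-zero.

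First I would recall from the \distri ${_n}M[2]\to C\dtp\Z/n\to T_{\Z/n}(C)[1]\to {_n}M[3]$ that $\Phi_n^K$ (resp.\ $\Phi_n^\Pi$) is the connecting map $H^0(K,T_{\Z/n}(C))\to H^2(K,{_n}M)$ (resp.\ $\BBP^0(K,T_{\Z/n}(C))\to\BBP^2(K,{_n}M)$), so that $\cok\Phi_n^K$ is a quotient of $H^2(K,{_n}M)$ and $\cok\Phi_n^\Pi$ a quotient of $\BBP^2(K,{_n}M)$, compatibly with the transition maps of the respective inverse systems. Next I would identify those transition maps: for $n\mid m$ the map $C\dtp\Z/m\to C\dtp\Z/n$ is the one induced by the pair $(\cdot(m/n),\operatorname{id})$ on the Kummer triangles $C\xrightarrow{m}C$ and $C\xrightarrow{n}C$ (this is the normalization that makes $\prl_n\BBH^0(K,C)/n=\BBH^0(K,C)_{\wedge}$ and $\prl_n{_n}\BBH^1(K,C)$ the Tate module, as in \cref{lemma: an interpretation of Sha wedge}), so the induced transition ${_m}M\to{_n}M$ is multiplication by $m/n$. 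Fixing $N\ge1$ with $NM=0$ and taking $m=Nn$, this map is multiplication by $N$ on the subgroup ${_{Nn}}M\subset M$, hence is zero; therefore the transition maps $H^2(K,{_{Nn}}M)\to H^2(K,{_n}M)$ and $\BBP^2(K,{_{Nn}}M)\to\BBP^2(K,{_n}M)$ vanish, and — passing to the quotients — so do $\cok\Phi_{Nn}^K\to\cok\Phi_n^K$ and $\cok\Phi_{Nn}^\Pi\to\cok\Phi_n^\Pi$. An inverse system over the countable directed set $(\N,\mid)$ in which, for every index $n$, the transition map from some larger index into $n$ is zero is Mittag--Leffler (its transition images into $\cok\Phi_n^K$, resp.\ $\cok\Phi_n^\Pi$, stabilize at $0$), hence has $\prl^1=0$ (and moreover has zero limit). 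Applying this to $(\cok\Phi_n^K)_n$ and $(\cok\Phi_n^\Pi)_n$ finishes the proof.

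I do not expect a genuine obstacle here: the only point that requires care is the bookkeeping identifying the transition maps of $\prl_n\BBH^0(K,C\dtp\Z/n)$ and of its constituents in the triangle above with the multiplication maps $\cdot(m/n)$, and this is routine and already implicit elsewhere in the paper. It is worth flagging that finiteness of $\ker\rho$ enters in an essential way — precisely to force multiplication by $N$ to annihilate ${_n}M$ — which is consistent with that hypothesis being in force throughout this block of the argument.
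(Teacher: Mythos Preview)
Your argument is correct and in fact cleaner than the paper's. You observe that the inverse systems $(\cok\Phi_n^K)_n$ and $(\cok\Phi_n^\Pi)_n$ are pro-zero because the transition maps ${_{Nn}}M\to{_n}M$ (multiplication by $N$, with $N$ the exponent of the finite group $M$) vanish, and hence both $\prl^1$ groups are zero; this is exactly the mechanism of \cref{lemma: inverse limit of cohlgy of finite group vanishes}, applied one cohomological degree higher and pushed to the quotients. The only bookkeeping you flag---that the transition maps on $C\dtp\Z/n$ are induced by $(\cdot(m/n),\id)$ on the Kummer triangles and hence restrict to multiplication by $m/n$ on ${_m}M$---is indeed routine and already used implicitly in \cref{lemma: inverse limit of cohlgy of finite group vanishes}.

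The paper takes a genuinely different route: rather than showing both $\prl^1$ groups vanish, it establishes a chain of isomorphisms
\[
\prl_n^1\cok\Phi_n^K
\;\simeq\;\prl_n^1 H^2(K,{_n}M)
\;\simeq\;\prl_n^1\rmI_n
\;\simeq\;\prl_n^1\BBP^2(K,{_n}M)
\;\simeq\;\prl_n^1\cok\Phi_n^\Pi,
\]
where $\rmI_n=\Im\big(H^2(K,{_n}M)\to\BBP^2(K,{_n}M)\big)$. The outer identifications use that $\Im\Phi_n^K$ and $\Im\Phi_n^\Pi$ are finite (resp.\ compact), so their $\prl^1$ vanishes by \cite{Jen72}*{Th\'eor\`eme 7.3}; the middle ones use finiteness of $\Sha^2({_n}M)$ and the vanishing of $\prl_n\cok\big(\rmI_n\to\BBP^2(K,{_n}M)\big)$, which again ultimately rests on the finiteness of $M$. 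Your approach buys both brevity and a stronger conclusion (both groups are zero, not merely isomorphic), and it makes transparent why the hypothesis on $\ker\rho$ is needed. The paper's approach has the minor advantage of not touching the normalization of transition maps explicitly, trading that for several auxiliary finiteness inputs.
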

\begin{proof}
Since $H^0(L,T_{\Z/n}(C))$ is finite for $L=K$, $K_v$,
$\BBP^0(K,T_{\Z/n}(C))=\prod_{v\in X\uun}H^0(K_v,T_{\Z/n}(C))$ is compact.
Thus we obtain $\prl_n^1 H^0(K,T_{\Z/n}(C))=0$ and $\prl_n^1 \BBP^0(K,T_{\Z/n}(C))=0$ by \cite{Jen72}*{Th\'eor\`eme 7.3}.
Moreover, the image $\Im\Phi_n^K$ of $H^0(K,T_{\Z/n}(C))$ in $H^2(K,{_n}M)$ is finite,
so $\prl_n^1H^2(K,{_n}M)\simeq\prl_n^1\cok\Phi_n^K$
by the \ses $0\to \Im\Phi_n^K\to H^2(K,{_n}M)\to \cok\Phi_n^K\to 0$.
Similarly, we obtain $\prl_n^1\BBP^2(K,{_n}M)\simeq\prl_n^1\cok\Phi_n^{\Pi}$.

Let $\rmI_n$ denote the image of $H^2(K,{_n}M)\to \BBP^2(K,{_n}M)$.
So $0\to \Sha^2({_n}M)\to H^2(K,{_n}M)\to \rmI_n\to 0$ is an exact sequence.
The finiteness of $\Sha^2({_n}M)$ yields an isomorphism
$\tst\prl_n^1 H^2(K,{_n}M)\simeq\prl_n^1 \rmI_n$.
Moreover, the cokernel of $\rmI_n\to \BBP^2(K,{_n}M)$ is a subgroup of the group $H^1(K,({_n}M)')^D$
by diagram (\ref{diagram: 3 times 5 for PT seq second row}).
The finiteness of $M$ yields the vanishing of $\prl_n \big(H^1(K,({_n}M)')^D\big)\simeq H^1(K,\drl_n({_n}M)')^D$
and thus $\prl_n\cok\big(\rmI_n\to \BBP^2(K,{_n}M)\big)=0$ by the left exactness of inverse limits.
We conclude that $\prl_n^1 \rmI_n\to\prl_n^1 \BBP^2(K,{_n}M)$ is an isomorphism.
Therefore we have
$\prl^1_n\cok\Phi_n^K\simeq\prl^1_n\rmI_n\simeq\prl^1_n\cok\Phi_n^{\Pi}$.
\end{proof}

\begin{cor}\label{result: PT seq row 2}
There is an \ttes of locally compact groups
\[
0\to \Sha^0_{\wedge}(C)\to \BBH^0(K,C)_{\wedge}\to \BBP^0(K,C)_{\wedge}\to \BBH^1(K,C')^D\to \Sha^1(C')^D\to 0.
\]
\end{cor}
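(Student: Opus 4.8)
The plan is to extract the corollary from \cref{lemma: exactness of proj lim in finite level for row 2 and 3} by rewriting its three middle terms in closed form and then prolonging the resulting three-term exact sequence on both sides. Since $\ker\rho$ is finite, $\BBH^1(K,C)$ is torsion of finite exponent by \cref{remark: the symmetry of finite kernel and surjectivity}, so $\prl_n{_n}\BBH^1(K,C)=0$; feeding this into the top row of diagram (\ref{diagram: Pi mod n into product of Hi mod n for -1 and 0}) gives $\BBH^0(K,C)_\wedge\isoto\prl_n\BBH^0(K,C\dtp\Z/n)$. On the local side each $\BBH^1(K_v,C)$ is finite (d\'evissage from $M[1]\to C\to T\to M[2]$ with $M$ finite, together with finiteness of $H^1(K_v,T)$ and $H^2(K_v,M)$), hence $\prl_n{_n}\BBH^1(K_v,C)=0$ for every $v$; since an element of the inverse limit of the restricted products $\{{_n}\BBP^1(K,C)\}_n$ vanishes as soon as each of its local components does, $\prl_n{_n}\BBP^1(K,C)=0$ as well, so the bottom row of diagram (\ref{diagram: Pi mod n into product of Hi mod n for -1 and 0}) yields $\BBP^0(K,C)_\wedge\isoto\prl_n\BBP^0(K,C\dtp\Z/n)$. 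Finally Galois cohomology commutes with filtered colimits, so $\BBH^0(K,\drl_nC'\dtp\Z/n)^D=\big(\drl_n\BBH^0(K,C'\dtp\Z/n)\big)^D$; taking the colimit of the Kummer sequence $0\to\BBH^0(K,C')/n\to\BBH^0(K,C'\dtp\Z/n)\to{_n}\BBH^1(K,C')\to0$, using that $\BBH^0(K,C')$ has finite exponent (again \cref{remark: the symmetry of finite kernel and surjectivity}) and that $\BBH^1(K,C')$ is torsion (d\'evissage from $T_2'\to T_1'\to C'\to T_2'[1]$), this colimit is $\BBH^1(K,C')$. Hence \cref{lemma: exactness of proj lim in finite level for row 2 and 3} now reads precisely that $\BBH^0(K,C)_\wedge\to\BBP^0(K,C)_\wedge\to\BBH^1(K,C')^D$ is exact in the middle.

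To prolong on the left, note that $\Sha^0_\wedge(C)$ is by definition the kernel of the first map and it is finite by \cref{duality: global 0 2 hard one}, so $0\to\Sha^0_\wedge(C)\to\BBH^0(K,C)_\wedge\to\BBP^0(K,C)_\wedge$ is exact. To prolong on the right I would invoke the duality of Step~1: under the isomorphism $\BBP^0(K,C)_\wedge\simeq\big(\BBP^1(K,C')_{\tors}\big)^D$ the map $\BBP^0(K,C)_\wedge\to\BBH^1(K,C')^D$ should be identified with the Pontryagin dual of the localization map $\ell\colon\BBH^1(K,C')\to\BBP^1(K,C')_{\tors}$ (which is defined since $\BBH^1(K,C')$ is torsion). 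Granting this, $\ker\ell=\Sha^1(C')$ is finite by \cref{remark: on degrees III Sha C}; the image of $\ell$ is discrete in $\BBP^1(K,C')_{\tors}$ by the same argument via \cref{lemma: localization sequence I} as in the proof of \cref{result: PT seq row 1}, hence closed, so dualizing the short exact sequence $0\to\Sha^1(C')\to\BBH^1(K,C')\to\Im\ell\to0$ together with the closed inclusion $\Im\ell\hookrightarrow\BBP^1(K,C')_{\tors}$ shows that $\BBP^0(K,C)_\wedge\to\BBH^1(K,C')^D$ has image the kernel of a surjection $\BBH^1(K,C')^D\twoheadrightarrow\Sha^1(C')^D$. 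Splicing the three pieces together yields the asserted six-term exact sequence of locally compact groups.

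The main obstacle, I expect, is the compatibility assertion in the second paragraph: one must check that the duality map supplied by \cref{lemma: exactness of proj lim in finite level for row 2 and 3} (built from the $\dtp\Z/n$-level pairings of \cref{sequence: finite PT seq 15-term}, \cref{duality: local-finite level} and \cref{duality: AV-finite level}, then passed to the limit and colimit) coincides, after the Step~1 identification, with the transpose of the plain localization map on $\BBH^1(K,C')$. This amounts to tracing the cup-product pairing $C\dtp C'\to\Z(2)[3]$ through the Kummer sequences and through the inverse and direct limits; once it is in place, the topological refinements (local compactness, discreteness of the image of $\ell$) are routine and follow the template already used for \cref{result: PT seq row 1}.
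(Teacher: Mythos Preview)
Your proof is correct and follows essentially the same two-step route as the paper, with one genuine streamlining and one small missing input.

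For the middle three terms, the paper keeps diagram~(\ref{diagram: Pi mod n into product of Hi mod n for -1 and 0}) and appends a third row $\BBH^1(K,C')^D\hookrightarrow\BBH^0(K,\drl_nC'\dtp\Z/n)^D$, then does a diagram chase using that $\Phi_K\to\Phi_\Pi$ is injective (from \cref{lemma: an interpretation of Sha wedge}). Your observation that in fact $\Phi_K=\Phi_\Pi=0$ (because $\BBH^1(K,C)$ and each $\BBH^1(K_v,C)$ have finite exponent when $\ker\rho$ is finite) and that the Kummer colimit $\drl_n\BBH^0(K,C'\dtp\Z/n)$ is all of $\BBH^1(K,C')$ (because $\BBH^0(K,C')$ has finite exponent) is a legitimate shortcut: under the hypothesis $\ker\rho$ finite, the horizontal maps in that diagram become isomorphisms and \cref{lemma: exactness of proj lim in finite level for row 2 and 3} is literally the desired three-term sequence. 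This is cleaner than the paper's chase, which is written to avoid computing the limits explicitly.

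For the right-hand extension both you and the paper dualize $0\to\Sha^1(C')\to\BBH^1(K,C')\xra{\ell}\BBP^1(K,C')_{\tors}$ via the Step~1 identification $\BBP^0(K,C)_\wedge\simeq(\BBP^1(K,C')_{\tors})^D$; the compatibility you flag as ``the main obstacle'' is exactly what the paper also needs and handles by analogy with \cref{result: PT seq row 1} without further comment. The one input you do not supply is why $\ell$ has \emph{discrete} image: the template from \cref{result: PT seq row 1} feeds on $\BBH^2(U,\CC')$ being torsion of cofinite type, but for $\BBH^1(U,\CC')$ \cref{lemma: list of groups II}\ref{list I 1} only controls the torsion part. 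The paper closes this by noting that $\ker\rho$ finite forces $\wc{T}_1\to\wc{T}_2$ to be injective and then invoking \cite{Diego-these}*{Lemme 1.4.10(iv)}, which under that injectivity gives $\BBH^1(U,\CC')$ itself torsion of cofinite type for $U$ small. You should insert this reference (or a direct argument) at that step; once it is in place, your discreteness-hence-closed-hence-dualizable argument goes through exactly as written.
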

\begin{proof}
We consider again the
diagram (\ref{diagram: Pi mod n into product of Hi mod n for -1 and 0})
with rows and the middle column being exact
(by \cref{lemma: exactness of proj lim in finite level for row 2 and 3}):
\[
\xm{
0\ar[r] & \BBH^0(K,C)_{\wedge}\ar[r]\ar[d] & \prl_n \BBH^0(K,C\dtp\Z/n)\ar[r]\ar[d] & \Phi_K\ar[r]\ar[d] & 0 \\
0\ar[r] & \BBP^0(K,C)_{\wedge}\ar[r]\ar[d] & \prl_n \BBP^0(K,C\dtp\Z/n)\ar[r]\ar[d] & \Phi_{\Pi}\ar[r] & 0. \\
        & \BBH^1(K,C')^D\ar[r]             & \BBH^0(K,\drl_n C'\dtp\Z/n)^D
}
\]
Since $\BBH^0(K,C'\dtp\Z/n)\to {_n}\BBH^1(K,C')$ is a surjective map between finite discrete groups,
there is an injective map $\big({_n}\BBH^1(K,C')\big)^D\to \big(\BBH^0(K,C'\dtp\Z/n)\big)^D$
and hence an injection $\BBH^1(K,C')\to \BBH^0(K,\drl_n C'\dtp\Z/n)$.
So the left vertical column is a complex.
But the map $\Phi_K\to \Phi_{\Pi}$ is injective
(see the proof of \cref{lemma: an interpretation of Sha wedge}),
another diagram chasing then tells us the left column is exact.

To show the exactness of the last three terms, we consider the \ttes $0\to \Sha^1(C')\to \BBH^1(K,C')\to \BBP^1(K,C')_{\tors}$.
It suffices to show $\BBH^1(K,C')\to \BBP^1(K,C')_{\tors}$ has discrete image
(the same reason as the proof of \cref{result: PT seq row 1}).
Since $\ker\rho$ is finite, $\wh{T}_2\to \wh{T}_1$ has finite cokernel and hence $\wc{T}_1\to \wc{T}_2$ is injective.
By \cite{Diego-these}*{Lemme 1.4.10(iv)}, $\BBH^1(U,\CC')$ is torsion of cofinite type for $U\subset X_0$ sufficiently small.
Thus the same argument as \cref{result: PT seq row 1} yields the desired exactness.
\end{proof}

\begin{rmk}
By \cref{lemma: list of groups II}, the groups $\BBH\ui(K,C)$ and $\BBP\ui(K,C)$ are torsion of finite exponent,
therefore they are isomorphic to respective completions $\BBH\ui(K,C)_{\wedge}$ and $\BBP\ui(K,C)_{\wedge}$.
Again by \cref{lemma: list of groups II}, $\BBH^1(K,C)$ and $\BBP^1(K,C)$ have finite exponents.
Consequently, we obtain $\BBH^1(K,C)\simeq \BBH^1(K,C)_{\wedge}$ and $\BBP^1(K,C)_{\tors}=\BBP^1(K,C)\simeq \BBP^1(K,C)_{\wedge}$.
Moreover, the finiteness of $M$ implies that $\rho':T_2'\to T_1'$ is surjective and
hence $\BBH^0(K,C')$ has finite exponent by \cref{lemma: list of groups II}.
Summing up, the subscripts "$\tors$" and "$\wedge$" in the first and the third rows of (\ref{diagram: 12-term PT for short complex}) are superfluous.
\end{rmk}

\begin{prop}\label{result: PT seq row 3}
There is an \ttes of locally compact groups
\[
0\to \Sha^1(C)\to \BBH^1(K,C)\to \BBP^1(K,C)_{\tors}\to \big(\BBH^0(K,C')_{\wedge}\big)^D\to \BBH^2(K,C).
\]
\end{prop}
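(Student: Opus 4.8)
The plan is to imitate the proof of \cref{result: PT seq row 4}: one realizes the sequence as the direct limit over $n$ of the degree-zero portion of the finite-level $15$-term sequence of \cref{sequence: finite PT seq 15-term} and then identifies the limit terms by the Kummer sequences, the hypothesis that $M=\ker\rho$ be finite being needed only to keep the last term under control.

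First I would dispose of the left-hand end. The localization map $\BBH^1(K,C)\to\BBP^1(K,C)$ is well defined: a class in $\BBH^1(K,C)=\drl_U\BBH^1(U,\CC)$ comes from some $\BBH^1(U,\CC)$, whose components at $v\in U$ lie in $\BBH^1(\CO_v,\CC)$, so its image lies in the restricted product (cf.\ the proof of \cref{result: PT seq row 1} and \cref{lemma: localization sequence I}). Since $M$ is finite, $\BBP^1(K,C)$ has finite exponent by \cref{lemma: list of groups II}, so $\BBP^1(K,C)=\BBP^1(K,C)_{\tors}$; and since $\BBP^1(K,C)\hookrightarrow\prod_v\BBH^1(K_v,C)$ is injective, the kernel of the localization map is $\Sha^1(C)$ by definition. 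This gives exactness of $0\to\Sha^1(C)\to\BBH^1(K,C)\to\BBP^1(K,C)_{\tors}$.

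Next I would apply $\drl_n$ (a filtered colimit, hence exact) to the four consecutive terms $\BBH^0(K,C\dtp\Z/n)\to\BBP^0(K,C\dtp\Z/n)\to\BBH^0(K,C'\dtp\Z/n)^D\to\BBH^1(K,C\dtp\Z/n)$ of \cref{sequence: finite PT seq 15-term}, and identify the limits via the Kummer sequences. Since $\BBH^1(K,C)$ is torsion, $\drl_n\BBH^0(K,C\dtp\Z/n)$ is an extension of $\BBH^1(K,C)$ by the divisible group $\BBH^0(K,C)\otimes\QZ$, and the limit map induces the localization $\BBH^1(K,C)\to\BBP^1(K,C)$ on the quotients; likewise, using \cref{injectivity: Pi mod n into product of Hi mod n for -1 and 0}, $\drl_n\BBP^0(K,C\dtp\Z/n)$ is an extension of $\BBP^1(K,C)_{\tors}$ by $\BBP^0(K,C)\otimes\QZ$; by the diagram of \cref{lemma: an interpretation of Sha wedge} applied to $C'$, $\drl_n\BBH^0(K,C'\dtp\Z/n)^D=\bigl(\prl_n\BBH^0(K,C'\dtp\Z/n)\bigr)^D$ is an extension of $\bigl(\BBH^0(K,C')_{\wedge}\bigr)^D$ by an overhead controlled by $\prl_n{_n}\BBH^1(K,C')$; and, exactly as in \cref{result: PT seq row 4}, $\drl_n\BBH^1(K,C\dtp\Z/n)\cong\BBH^2(K,C)$ because $\BBH^1(K,C)$ has finite exponent (\cref{lemma: list of groups II}, using $M$ finite) while $\BBH^2(K,C)$ is torsion. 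The induced middle map $\BBP^1(K,C)_{\tors}\to\bigl(\BBH^0(K,C')_{\wedge}\bigr)^D$ is then identified, via the corollary following \cref{duality: annihilators of local pairing}, with the transpose of the localization $\BBH^0(K,C')_{\wedge}\to\BBP^0(K,C')_{\wedge}$ under $\BBP^1(K,C)_{\tors}\cong\bigl(\BBP^0(K,C')_{\wedge}\bigr)^D$, the compatibility with Artin--Verdier duality being the one recorded in \cref{lemma: local duality vs AV duality}.

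The remaining step is a diagram chase removing the three ``overhead'' subgroups ($\BBH^0(K,C)\otimes\QZ$, $\BBP^0(K,C)\otimes\QZ$, and the overhead of $\drl_n\BBH^0(K,C'\dtp\Z/n)^D$): the key point is that the overhead of $\drl_n\BBP^0(K,C\dtp\Z/n)$ surjects onto that of $\drl_n\BBH^0(K,C'\dtp\Z/n)^D$, which one obtains by dualizing the injectivity of the transition map $\Phi_K\to\Phi_\Pi$ established in \cref{lemma: an interpretation of Sha wedge} (applied to $C'$; it holds because $\Sha^1(C')$ is finite by \cref{remark: on degrees III Sha C}, so $\prl_n{_n}\Sha^1(C')=0$). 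Quotienting the $\drl_n$-exact sequence by these overheads then yields exactness of $\BBH^1(K,C)\to\BBP^1(K,C)_{\tors}\to\bigl(\BBH^0(K,C')_{\wedge}\bigr)^D\to\BBH^2(K,C)$, and splicing with the first step gives the asserted sequence (its image in $\BBH^2(K,C)$ being $\Sha^2(C)$, in agreement with \cref{duality: global 0 2 easy one}). The main obstacle is precisely this cancellation of overheads: in contrast to \cref{result: PT seq row 4}, where $\BBH^1(K,C)\otimes\QZ=0$ made the limit automatically clean, here none of the overhead terms vanishes and one must verify that they pair off correctly, which rests on the finiteness of $\Sha^1(C)$ and $\Sha^1(C')$ and on the compatibility of \cref{lemma: local duality vs AV duality}. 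An alternative, in the style of \cref{lemma: exactness of proj lim in finite level for row 2 and 3}, is to splice the finite-level Poitou--Tate sequences for the finite group schemes ${_n}M$ and $T_{\Z/n}(C)$ along the triangle ${_n}M[2]\to C\dtp\Z/n\to T_{\Z/n}(C)[1]\to{_n}M[3]$ before passing to $\drl_n$; since $\drl_n$ is exact there is no $\prl^{1}$-obstruction (unlike in \cref{lemma: inj of derived proj lim}), and, $M$ being finite, the ${_n}M$-contributions collapse, which makes the cancellation transparent.
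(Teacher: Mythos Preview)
Your strategy and the paper's agree on the two ends: the exactness at $\BBH^1(K,C)$ is immediate from the definition of $\Sha^1$, and the exactness at $\big(\BBH^0(K,C')_{\wedge}\big)^D$ is obtained in the paper exactly as you describe, by taking $\drl_n$ of the diagram \eqref{diagram: PT seq row 3} and using that the right vertical arrow becomes an isomorphism because $\BBH^1(K,C)\otimes\QZ=0$. The divergence is at $\BBP^1(K,C)_{\tors}$.

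For that term the paper does \emph{not} take $\drl_n$ in degree $0$; it takes the \emph{inverse} limit of the degree-$1$ Kummer diagram \eqref{diagram: PT seq last row}. Because $M$ is finite, $\BBH^1(K,C)$ and $\BBP^1(K,C)$ have finite exponent, so the left column after $\prl_n$ is literally $\BBH^1(K,C)\to\BBP^1(K,C)\to\BBH^0(K,C')^D$ with no completion. The middle column is exact since $\prl^1_n\Sha^1(C\dtp\Z/n)=0$ (finite groups), and the right-hand map $\Psi_K\to\Psi_\Pi$ is injective because its kernel lies in $\prl_n{_n}\Sha^2(C)=0$ (finiteness of $\Sha^2(C)$). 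A one-line diagram chase then yields exactness of the left column. No divisible ``overheads'' appear.

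Your direct-limit approach, by contrast, hinges on the map of overheads $\BBP^0(K,C)\otimes\QZ\to(\Phi'_K)^D$ being well defined and surjective, where $\Phi'_K\subset\prl_n{_n}\BBH^1(K,C')$. The justification you give---dualizing $\Phi'_K\hookrightarrow\Phi'_\Pi$---does not suffice: $\Phi'_\Pi$ is only a (possibly proper, possibly non-closed) subgroup of $\prl_n{_n}\BBP^1(K,C')$, and it is the dual of the \emph{latter} group that local duality identifies with $\BBP^0(K,C)\otimes\QZ$. You would still need that $\Phi'_K$ sits as a closed subgroup of $\prl_n{_n}\BBP^1(K,C')$ (so that Pontryagin duality turns the inclusion into a surjection), and, prior to that, that the Poitou--Tate map actually carries $\BBP^0(K,C)/n$ into $({_n}\BBH^1(K,C'))^D$---equivalently, that the restricted pairing $\BBP^0(K,C)/n\times\BBH^0(K,C')/n\to\QZ$ vanishes, which is a cup-product compatibility not covered by \cref{lemma: local duality vs AV duality}. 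These points may well be true, but they require argument; the paper's inverse-limit device bypasses them entirely.
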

\begin{proof}
Taking inverse limit in diagram (\ref{diagram: PT seq last row}) yields a \cmdm
\[
\xm{
0\ar[r] & \BBH^1(K,C)\ar[r]\ar[d] & \prl_n \BBH^1(K,C\dtp\Z/n)\ar[r]\ar[d] & \Psi_{K}  \ar[r]\ar[d] & 0 \\
0\ar[r] & \BBP^1(K,C)\ar[r]\ar[d] & \prl_n \BBP^1(K,C\dtp\Z/n)\ar[r]\ar[d] & \Psi_{\Pi}\ar[r] & 0. \\
0\ar[r] & \BBH^0(K,C')^D\ar[r] & \prl_n \BBH\ui(K,C'\dtp\Z/n)^D
}
\]
We observe that $\Psi_{K}\to \Psi_{\Pi}$ is injective because its kernel is contained in $\prl_n{_n}\Sha^2(C)=0$ (recall that $\Sha^2(C)$ is finite).
The exactness of the middle column follows from the vanishing $\prl^1_n\Sha^1(C\dtp\Z/n)=0$
(by \cite{Jen72}*{Th\'eor\`eme 7.3}).
Thus a diagram chasing yields the exactness of the left column.

Now we verify the exactness of $\BBP^1(K,C)\to \BBH^0(K,C')^D\to \BBH^2(K,C)$.
Consider the \cmdm with vertical arrows obtained from respective Kummer sequences:
\beac\label{diagram: PT seq row 3}
\xm{
\BBP^0(K,C\dtp\Z/n)\ar[r]\ar@{->>}[d] & \BBH^0(K,C'\dtp\Z/n)^D\ar[r]\ar@{->>}[d] & \BBH^1(K,C\dtp\Z/n)\ar@{->>}[d]\\
{_n}\BBP^1(K,C)\ar[r] & \big(\BBH^0(K,C')/n\big)^D\ar[r]^-{*} & {_n}\BBH^2(K,C),
}
\eeac
where
the upper row is exact by \cref{sequence: finite PT seq 15-term} and
the arrow $*$ is the composite
$\big(\BBH^0(K,C')/n\big)^D\to \big(\Sha^0(C')/n\big)^D\simeq {_n}\Sha^2(C)\to {_n}\BBH^2(K,C)$.
The left square in (\ref{diagram: PT seq row 3}) commutes by \cref{duality: local-finite level} and
the right one commutes by construction and \cref{duality: global 0 2 easy one}.
Finally, the middle vertical arrow is surjective
because $\BBH^0(K,C'\dtp\Z/n)$ and $\BBH^0(K,C')/n$ are discrete.
Passing to the direct limit over all $n$, the right vertical arrow in diagram (\ref{diagram: PT seq row 3}) becomes an isomorphism as $\BBH^1(K,C)\ots\Q/\Z=0$.
Now a diagram chasing implies the exactness of $\BBP^1(K,C)\to \BBH^0(K,C')^D\to \BBH^2(K,C)$.
\end{proof}

\vspace{0.5em}
\noindent\textbf{Step 3.2:
Exactness of the second and the third rows of diagram (\ref{diagram: 12-term PT for short complex}): the surjective case}
\vem

Suppose $\rho:T_1\to T_2$ is surjective.
Thus $\wh{T}_2\to \wh{T}_1$ is injective and $\wc{T}_1\to \wc{T}_2$ has finite cokernel, i.e. $\rho':T_2'\to T_1'$ has finite kernel.
Therefore $\BBH\ui(K,C')$, $\BBH^0(K,C)$, $\BBH^1(K,C')$ and $\BBH^2(K,C)$
are torsions group having finite exponent by \cref{lemma: list of groups II}.
It follows that $\BBH^0(K,C)_{\wedge}=\BBH^0(K,C)$,
$\BBP^0(K,C)_{\wedge}=\BBP^0(K,C)$,
$\BBP^2(K,C)_{\tors}=\BBP^2(K,C)$ and
$\BBH\ui(K,C')_{\wedge}=\BBH\ui(K,C')$,
i.e. the subscripts in diagram (\ref{diagram: 12-term PT for short complex}) are superfluous.
After \cref{result: PT seq row 1} and \cref{result: PT seq row 4},
it remains to show the following proposition.

\begin{prop}\label{result: PT seq row 2 and 3 surjective case}
Suppose $\rho:T_1\to T_2$ is surjective.
Then there are exact sequences
$0\to \Sha^0(C)\to \BBH^0(K,C)\to \BBP^0(K,C)\to \BBH^1(K,C')^D\to \BBH^1(K,C)\to \BBP^1(K,C)_{\tors}\to \big(\BBH^0(K,C')_{\wedge}\big)^D\to \BBH^2(K,C)$.
\end{prop}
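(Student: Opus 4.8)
The plan is to reduce the surjective case to the finite‑kernel case applied to the dual complex $C'$. If $\rho$ is surjective then $\rho':T_2'\to T_1'$ has finite kernel by \cref{remark: the symmetry of finite kernel and surjectivity}, so \cref{result: PT seq row 1}, \cref{result: PT seq row 2}, \cref{result: PT seq row 3} and \cref{result: PT seq row 4} all apply verbatim with $C$ replaced by $C'$, and together they assemble into the full $12$‑term Poitou--Tate sequence for $C'$. Moreover, by \cref{lemma: list of groups II} (as recorded in the preamble to this step) the groups $\BBH^0(K,C)$, $\BBH^0(K,C')$, $\BBH^1(K,C')$, $\BBH\ui(K,C')$ and the corresponding restricted products are torsion of finite exponent, so that $\Sha^0_{\wedge}(C)=\Sha^0(C)$, $\BBP^0(K,C)_{\wedge}=\BBP^0(K,C)$ and $\BBP^1(K,C)_{\tors}=\BBP^1(K,C)$; this is why the asserted sequence carries essentially no decorations.

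From the finite‑kernel theory for $C'$ I would extract the two ``clean'' exact sequences of locally compact groups
\[
0\to \Sha^0_{\wedge}(C')\to \BBH^0(K,C')_{\wedge}\to \BBP^0(K,C')_{\wedge}\to \BBH^1(K,C)^D\to \Sha^1(C)^D\to 0
\]
(which is \cref{result: PT seq row 2} for $C'$) and
\[
0\to \Sha^1(C')\to \BBH^1(K,C')\to \BBP^1(K,C')_{\tors}\to \big(\BBH^0(K,C)_{\wedge}\big)^D\to \Sha^2(C')\to 0,
\]
the second being \cref{result: PT seq row 3} for $C'$, completed on the right by the fact that the image of $\big(\BBH^0(K,C)_{\wedge}\big)^D$ in $\BBH^2(K,C')$ equals $\ker\big(\BBH^2(K,C')\to\BBP^2(K,C')_{\tors}\big)=\Sha^2(C')$, by \cref{result: PT seq row 4} for $C'$ and the assembly of the spiral.

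Next I would apply $\hom_{\cts}(-,\QZ)$ to both sequences. The restricted‑product terms are converted using the duality $\BBP^i(K,C)_{\wedge}\simeq\big(\BBP^{1-i}(K,C')_{\tors}\big)^D$ from \cref{duality: annihilators of local pairing} and its corollary; the discrete groups $\BBH^i(K,C)$, $\BBH^i(K,C')$ and the finite groups $\Sha^{\bullet}$ are reflexive for Pontryagin duality; and the $\Sha$‑terms are rewritten via the global dualities $\Sha^0(C)\simeq\Sha^2(C')^D$ (\cref{duality: global 0 2 easy one} for $C'$), $\Sha^0_{\wedge}(C')^D\simeq\Sha^2(C)$ (\cref{duality: global 0 2 hard one} for $C'$, legitimate since $\ker\rho'$ is finite) and $\Sha^1(C)\simeq\Sha^1(C')^D$ (\cite{cat19WA}*{Theorem 1.17}). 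Dualizing the first displayed sequence gives
\[
0\to \Sha^1(C)\to \BBH^1(K,C)\to \BBP^1(K,C)_{\tors}\to \big(\BBH^0(K,C')_{\wedge}\big)^D\to \Sha^2(C)\to 0,
\]
and dualizing the second gives $0\to \Sha^0(C)\to \BBH^0(K,C)\to \BBP^0(K,C)\to \BBH^1(K,C')^D\to \Sha^1(C')^D\to 0$. Splicing the two along $\Sha^1(C')^D\simeq\Sha^1(C)\hookrightarrow\BBH^1(K,C)$ and appending $\Sha^2(C)\hookrightarrow\BBH^2(K,C)$ then yields exactly the asserted $8$‑term sequence ending at $\BBH^2(K,C)$.

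I expect the main obstacle to be checking that the Pontryagin duals of the Poitou--Tate connecting maps for $C'$ really are the intended connecting maps for $C$ --- equivalently, that the squares relating the local dualities, the global $\Sha$‑dualities, and the tautological inclusions $\Sha^i\hookrightarrow\BBH^i$ all commute, so that the spliced sequence is genuinely exact at $\BBP^0(K,C)$, $\BBH^1(K,C')^D$, $\BBH^1(K,C)$, $\BBP^1(K,C)_{\tors}$ and $\big(\BBH^0(K,C')_{\wedge}\big)^D$. This is the same circle of compatibilities already used to pass between $C$ and $C'$ in \cref{duality: global 0 2 easy one} and \cref{duality: global 0 2 hard one}, and can be handled by unwinding the finite‑level pairings of \cref{duality: global-finite level} and the $15$‑term sequence of \cref{sequence: finite PT seq 15-term} and passing to the limit, as in the finite‑kernel case; one should also note that topological exactness survives dualization because the relevant maps have closed images (the $\Sha$‑ and $\BBP$‑images being controlled by \cref{lemma: localization sequence I} as in the proof of \cref{result: PT seq row 1}).
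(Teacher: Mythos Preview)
Your strategy---reduce to the finite-kernel case for $C'$ and then Pontryagin-dualize---is a genuinely different route from the paper's. The paper does not dualize the $C'$-sequence; instead it reruns the diagram chases of \cref{result: PT seq row 2} and \cref{result: PT seq row 3} directly for $C$, exploiting that when $\rho$ is surjective the groups $\BBH^0(K,C)$ and $\BBH^1(K,C')$ have finite exponent (so e.g.\ $\BBH^0(K,C')_{\wedge}\simeq\prl_n\BBH^0(K,C'\dtp\Z/n)$ via the Kummer sequence), and then reads off each exactness from the finite-level $15$-term sequence by passing to limits. Your approach is more conceptual and would, if it goes through, explain the symmetry between the two hypotheses on $\rho$ in one stroke; the paper's approach is more hands-on but sidesteps the topological issues below.

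There are two points to flag. First, a slip: $\BBH^0(K,C')$ does \emph{not} have finite exponent when $\rho$ is surjective (only $\BBH^0(K,C)$, $\BBH^1(K,C')$, $\BBH\ui(K,C')$, $\BBH^2(K,C)$ do; see \cref{remark: the symmetry of finite kernel and surjectivity}). This is harmless for your argument since you keep the decoration $(\BBH^0(K,C')_{\wedge})^D$, but it means you cannot simplify that term. Second, and more substantively, the dualization step is where the real work hides. Pontryagin duality turns an exact sequence into an exact sequence only when all maps are strict (closed image, homeomorphism onto it) between locally compact groups. In the sequence $0\to\Sha^0_{\wedge}(C')\to\BBH^0(K,C')_{\wedge}\to\BBP^0(K,C')_{\wedge}\to\BBH^1(K,C)^D\to\Sha^1(C)^D\to 0$ the group $\BBH^0(K,C')_{\wedge}$ carries the subspace topology of $\prod_n\BBH^0(K,C')/n$ with each factor merely discrete (not finite), so local compactness and strictness of the first two maps are not automatic, and the analogue of the finiteness trick used in \cref{result: PT seq row 1} (where $\BBH\ui(K,C)/n$ is finite) is unavailable. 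You correctly identify this as the main obstacle, but the sketch via \cref{lemma: localization sequence I} handles discreteness of images in $\BBP^i$, not strictness of the map out of $\BBH^0(K,C')_{\wedge}$. The paper's direct argument avoids this by never forming the dual of $\BBH^0(K,C')_{\wedge}$ as an intermediate step: it gets exactness at $\BBP^1(K,C)_{\tors}$ from a direct-limit diagram chase, and exactness at the connecting terms from the $i=-1,0$ instances of diagram (\ref{diagram: PT seq row 3}).
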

\pff
\bitem
\itm
The exactness of $\BBH^0(K,C)\to \BBP^0(K,C)\to \BBH^1(K,C')^D$ follows from the same argument as the first paragraph of the proof of \cref{result: PT seq row 2}.

\itm
We show the exactness of $\BBH^1(K,C)\to \BBP^1(K,C)_{\tors}\to \big(\BBH^0(K,C')_{\wedge}\big)^D$.
Since $\BBH^1(K,C')$ has finite exponent,
$\prl_n {_n}\BBH^1(K,C')=0$ and hence $\BBH^0(K,C')_{\wedge}\simeq\prl_n \BBH^0(K,C'\dtp\Z/n)$ by the Kummer sequence
$0\to \BBH^0(K,C')/n\to \BBH^0(K,C'\dtp\Z/n)\to {_n}\BBH^1(K,C')\to 0$.
Consider the \cmdm
\[
\xm{
\drl_n \BBH^0(K,C\dtp\Z/n)\ar[r]\ar@{->>}[d] & \drl_n \BBP^0(K,C\dtp\Z/n)\ar[r]\ar@{->>}[d] & \big(\prl_n\BBH^0(K,C'\dtp\Z/n)\big)^D\ar[d]^{\simeq}\\
\BBH^1(K,C)\ar[r] & \BBP^1(K,C)_{\tors}\ar[r] & \big(\BBH^0(K,C')_{\wedge}\big)^D.
}
\]
Now the exactness of the lower row
follows from that of
the upper row by diagram chasing.

\itm
We consider the following \cmdm (see (\ref{diagram: PT seq row 3})) for $i=-1,0$:
\beac\label{diagram: PT seq row 3 surjective case}
\xm{
\BBP^i(K,C\dtp\Z/n)\ar[r]\ar@{->>}[d] & \BBH^{-i}(K,C'\dtp\Z/n)^D\ar[r]\ar@{->>}[d] & \BBH^{i+1}(K,C\dtp\Z/n)\ar@{->>}[d]\\
{_n}\BBP^{i+1}(K,C)\ar[r] & \big(\BBH^{-i}(K,C')/n\big)^D\ar[r] & {_n}\BBH^{i+2}(K,C).
}
\eeac
Note that $\BBH^0(K,C)\simeq H^1(K,M)$ and $\BBH^1(K,C)$ are torsion.
Thus $\BBH^i(K,C)\ots\QZ=0$ for $i=0,1$.
It follows that the right vertical arrow becomes an isomorphism
after taking direct limit in diagram (\ref{diagram: PT seq row 3 surjective case}).
Therefore we get the exactness of
$\BBP^0(K,C)\to \BBH^1(K,C')^D\to \BBH^1(K,C)$
and
$\BBP^1(K,C)_{\tors}\to \big(\BBH^0(K,C')_{\wedge}\big)^D\to \BBH^2(K,C)$ by diagram chasing.

\qed
\eitem


\begs\label{example: PT seq for tori mult type reductive group}
\item\label{example: PT seq for tori}
Let $P$ be a $K$-torus that extends to an $X_0$-torus $\CP$.
We consider the special case that $C=[0\to P]$ and $C'=P'[1]$.
By definition, $\BBH\ui(L,P)=0$ for $L=K$ or $K_v$ and hence the first two terms in diagram (\ref{diagram: 12-term PT for short complex}) vanish automatically.
The third term vanishies by \cref{lemma: list of groups II}.
Moreover, $\BBP^1(K,P)$ has finite exponent, so it is torsion.
Finally, $H^1(K,P')$ has finite exponent, thus the canonical map $H^1(K,P')\to H^1(K,P')_{\wedge}$ is an isomorphism.
The remaining $9$ terms in diagram (\ref{diagram: 12-term PT for short complex}) reads as
\[
\xm@C=20pt @R=12pt{
0\ar[r] & H^0(K,P)_{\wedge}\ar[r] & \BBP^0(K,P)_{\wedge}\ar[r] & H^2(K,P')^D
\ar@{->} `r/6pt[d] `/8pt[l] `^dl/8pt[lll] `^r/8pt[dl] [dll]\\
& H^1(K,P)\ar[r] & \BBP^1(K,P)\ar[r] & H^1(K,P')^D
\ar@{->} `r/6pt[d] `/8pt[l] `^dl/8pt[lll] `^r/9pt[dl] [dll]\\
& H^2(K,P)\ar[r] & \BBP^2(K,P)_{\tors}\ar[r] & \big(H^0(K,P')_{\wedge}\big)^D\ar[r] & 0
}
\]
which is the Poitou--Tate exact sequence for tori \cite{HSSz15}*{Theorem 2.9}.

\item\label{example: PT seq for mult type}
Let $M$ be a group of multiplicative type over $K$.
We may embed it into a \ses $0\to M\to T_1\to T_2\to 0$ with $T_1$ and $T_2$ being $K$-tori.
\inpart there is a quasi-isomorphism $M[1]\simeq C_M$ with $C_M=[T_1\stra{\rho} T_2]$.
In this case, $\cok\rho$ is trivial and we obtain a Poitou--Tate sequence for short complexes $C_M$ and thus for groups of multiplicative type.

\item\label{example: PT seq for reductive group}
Let $G$ be a connected reductive group over $K$.
Let $\Gsc$ be the universal covering of the derived subgroup $\Gss$ of $G$.
Let $\rho:\Gsc\to \Gss\to G$ be the composite.
Let $T$ be a maximal torus of $G$ and let $\Tsc\ce\rho\ui(T)$ be the inverse image of $T$ in $\Gsc$.
Thus $\Tsc$ is a \maxtorus of $\Gsc$.
Following \cite{Bor98}, we write $H^i_{\ab}(K,G)=\BBH^i(K,C)$ and $\P^i_{\ab}(K,G)=\P^i(K,C)$ with $C=[T^{\sconn}\to T]$ for the abelianized Galois cohomologies.
So the Poitou--Tate sequence (\ref{diagram: 12-term PT for short complex}) yields an \ttes for the abelianization of Galois cohomology of $G$ as follows
\[
\xm@C=20pt @R=12pt{
0\ar[r] & H\ui_{\ab}(K,G)\ar[r] & \BBP_{\ab}\ui(K,G)\ar[r] & \BBH^2(K,C')^D
\ar@{->} `r/5pt[d] `/8pt[l] `^dl/8pt[lll] `^r/8pt[dl][dll]\\
& H^0_{\ab}(K,G)_{\wedge}\ar[r] & \BBP^0_{\ab}(K,G)_{\wedge}\ar[r] & \BBH^1(K,C')^D
\ar@{->} `r/5pt[d] `/8pt[l] `^dl/8pt[lll] `^r/8pt[dl] [dll]\\
& H^1_{\ab}(K,G)\ar[r] & \BBP^1_{\ab}(K,G)\ar[r] & \BBH^0(K,C')^D
\ar@{->} `r/5pt[d] `/8pt[l] `^dl/8pt[lll] `^r/9pt[dl] [dll]\\
& H^2_{\ab}(K,G)\ar[r] & \BBP^2_{\ab}(K,G)_{\tors}\ar[r] & \big(\BBH\ui(K,C')_{\wedge}\big)^D\ar[r] & 0.
}
\]
Hopefully it will give a defect to strong approximation, which is analogous to the number field case \cite{Dem11AF}.
\eegs

Finally, we relate $\BBH^i(K,C')$ in
\cref{example: PT seq for tori mult type reductive group}\ref{example: PT seq for reductive group}
with a simpler cohomology group.
Recall that the algebraic fundamental group $\pialg{G}$
(see \cite{Bor98}*{\S1} and \cite{CT08-resolution-flasque}*{\S6} for more information)
of a connected reductive group $G$ is
$\pialg{G}\ce X_*(T)/\rho_*X_*(\Tsc)$ where $\rho_*:X_*(\Tsc)\to X_*(T)$ is induced by $\rho:\Tsc\to T$.

\begin{cor}\label{sequence: abel Kottwitz--Borovoi}
Let $G$ be a connected reductive group.
Let $C=[\Tsc\stra{\rho}T]$ be as above.
Let $G^*$ be the group of multiplicative type such that $X^*(G^*)=\pialg{G}$.
There is an \ttes
\[
0\to \Sha^1(C)\to \BBH^1(K,C)\to \BBP^1(K,C)\to H^1(K,G^*)^D.
\]
\end{cor}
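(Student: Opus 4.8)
The plan is to read off the asserted sequence from the third row of the Poitou--Tate sequence of \cref{theorem: PT for complex of tori}, in the precise form of \cref{result: PT seq row 3}, after rewriting the terms. First I would verify that $C=[\Tsc\stra{\rho}T]$ meets the hypothesis of \cref{result: PT seq row 3}, i.e.\ that $\ker\rho$ is finite. This is clear: writing $\rho$ as the composite $\Gsc\to\Gss\hookrightarrow G$, with the second arrow a closed immersion, one gets $\ker(\Tsc\to T)=\Tsc\cap\ker(\Gsc\to G)=\ker(\Gsc\to G)$, which is the finite central kernel of the simply connected covering $\Gsc\to\Gss$. Thus \cref{result: PT seq row 3} yields an exact sequence
\[0\to \Sha^1(C)\to \BBH^1(K,C)\to \BBP^1(K,C)_{\tors}\to \big(\BBH^0(K,C')_{\wedge}\big)^D\to \BBH^2(K,C),\]
and it remains to identify its first four terms with those in the statement.

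Next I would compute $C'$. Since $\ker\rho$ is finite, the induced map $\rho_*\colon X_*(\Tsc)\to X_*(T)$ on cocharacter modules is injective, and by the very definition of $\pialg{G}$ its cokernel is $\pialg{G}$. With the paper's conventions $X^*(T')=X_*(T)$ and $X^*((\Tsc)')=X_*(\Tsc)$, the morphism $\rho'\colon T'\to (\Tsc)'$ occurring in $C'=[T'\to (\Tsc)']$ induces $\rho_*$ on character modules; hence $\rho'$ is surjective and $\ker\rho'$ is the group of multiplicative type with character module $\cok(\rho_*)=\pialg{G}$, that is, $\ker\rho'\cong G^*$. Because $\cok\rho'$ is trivial, $C'$ is quasi-isomorphic to $G^*[1]$, so $\BBH^0(K,C')\cong H^1(K,G^*)$.

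Finally I would discard the decorations. By \cref{remark: the symmetry of finite kernel and surjectivity}, the finiteness of $\ker\rho$ makes $\BBH^0(K,C')$ torsion of finite exponent, so $\BBH^0(K,C')_{\wedge}=\BBH^0(K,C')\cong H^1(K,G^*)$ and therefore $\big(\BBH^0(K,C')_{\wedge}\big)^D\cong H^1(K,G^*)^D$; likewise $\BBP^1(K,C)$ has finite exponent (as recorded just before \cref{result: PT seq row 3}), hence $\BBP^1(K,C)_{\tors}=\BBP^1(K,C)$. Plugging these identifications into the displayed sequence and truncating after $H^1(K,G^*)^D$ gives exactly $0\to \Sha^1(C)\to \BBH^1(K,C)\to \BBP^1(K,C)\to H^1(K,G^*)^D$. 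I do not foresee any real difficulty here: the proof is bookkeeping on top of \cref{result: PT seq row 3}, and the one point needing a little care is the lattice-level identification $\ker\rho'\cong G^*$, which is a straightforward dualization once the conventions for the dual torus and the dual complex are fixed.
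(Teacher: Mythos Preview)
Your proof is correct and reaches the same conclusion as the paper, but by a shorter route in the key step of identifying $C'$ with $G^*[1]$.

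The paper establishes the quasi-isomorphism $G^*[1]\simeq C'$ by building a morphism of short exact sequences of complexes: it compares the sequence $0\to(\Gtor)'[1]\to G^*[1]\to(\ker\rho)'[1]\to 0$ (coming from the structure of $\pialg{G}$ via \cite{CT08-resolution-flasque}*{Proposition 6.4}) with the sequence $0\to(\Gtor)'[1]\to C'\to[(\Tss)'\to(\Tsc)']\to 0$, notes that the outer vertical maps are quasi-isomorphisms, and invokes the $5$-lemma on hypercohomology. You instead observe directly that $\rho_*\colon X_*(\Tsc)\to X_*(T)$ is injective (since $\ker\rho$ is finite) with cokernel $\pialg{G}$ by definition, so the dual map $\rho'\colon T'\to(\Tsc)'$ is surjective with kernel $G^*$, whence $C'\simeq G^*[1]$ immediately. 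This is cleaner and avoids the detour through $\Gtor$ and $\Tss$; the paper's argument has the minor advantage of making the filtration of $G^*$ by $(\Gtor)'$ and $(\ker\rho)'$ explicit, but that extra structure is not needed for the corollary itself.

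Your handling of the subscripts $(-)_{\tors}$ and $(-)_{\wedge}$ is also more explicit than the paper's, which simply cites \cref{example: PT seq for tori mult type reductive group}\ref{example: PT seq for reductive group} at the end.
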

\begin{proof}
Let $\Tss\ce T\cap \Gss$ and let $\Gtor\ce T/\Tss$.
Thus there is a \ses of short complexes
\be\label{sequence: ses of short complexes for red group}
0\to [(\Gtor)'\to 0]\to [T'\to (\Tsc)']\to [(\Tss)'\to (\Tsc)']\to 0.
\ee
By \cite{CT08-resolution-flasque}*{Proposition 6.4},
there is a \ses of abelian groups
\[
0\to (\ker\rho)(-1)\to \pialg{G}\to X_*(\Gtor)\to 0.
\]
Here $(\ker\rho)(-1)\ce\hom_{\Z}(X^*(\ker\rho),\QZ)$
is the module of characters of $(\ker\rho)'\ce \hom(\ker\rho,\QZ(2))$.
Thus there is an \ttes of groups of multiplicative type
\be\label{sequence: ses concerning fund group for red group}
0\to (\Gtor)'\to G^*\to (\ker\rho)'\to 0.
\ee

Since $\Tsc\to \Tss$ is an isogeny with kernel $\ker\rho$,
its dual isogeny $(\Tss)'\to (\Tsc)'$ has kernel $(\ker\rho)'$,
i.e. there is a quasi-isomorphic $[(\Tss)'\to (\Tsc)']\simeq (\ker\rho)'[1]$.
By definition there is an \ttes $X_*(\Tsc)\stra{\rho_*} X_*(T)\to \pialg{G}\to 0$,
so there is a corresponding \ttes $0\to G^*\to T'\to (\Tsc)'$ of groups of multiplicative type.
\inpart we obtain a morphism of short complexes $G^*[1]\to C'$.
Summing up,
there is a \cmdm of short complexes with exact rows obtained from (\ref{sequence: ses concerning fund group for red group}) and (\ref{sequence: ses of short complexes for red group}):
\[
\xm{
0\ar[r] & (\Gtor)'[1]\ar[r]\ar@{=}[d] & G^*[1]\ar[r]\ar[d] & (\ker\rho)'[1]\ar[r]\ar[d] & 0\\
0\ar[r] & (\Gtor)'[1]\ar[r] & C'\ar[r] & [(\Tss)'\to (\Tsc)']\ar[r] & 0.
}
\]
Since the right vertical arrow is a quasi-isomorphism,
so is the middle one after taking Galois hypercohomology and applying the $5$-lemma.
Thus $H^{i+1}(K,G^*)\simeq \BBH^i(K,C')$ and the desired sequence follows from \cref{example: PT seq for tori mult type reductive group}\ref{example: PT seq for reductive group}.
\end{proof}

\begin{rmk}
\cref{sequence: abel Kottwitz--Borovoi} gives an abelianized version of the Kottwitz--Borovoi sequence \cite{Bor98}*{Theorem 5.16} over $p$-adic function fields $K$.
Hopefully over such $K$ there is an \ttes $1\to \Sha^1(G)\to H^1(K,G)\to \BBP^1(K,G)\to H^1(K,G^*)^D$ of pointed sets for connected linear groups
which can be used to give an obstruction to \wa for homogenous spaces under some linear group with stabilizer $G$.
\end{rmk}

\begin{bibdiv}
\begin{biblist}

\bibselect{CF}

\end{biblist}
\end{bibdiv}

Univerit\'e Paris-Sud, Institut de Math\'ematique d'Orsay, B\^atiment 307, 91405 Orsay, France\\
\indent E-mail address: yisheng.tian@u-psud.fr

\end{document}